\newcommand{\LC}{\left(}
\newcommand{\RC}{\right)}
\newcommand{\p}{\partial}
\newcommand{\F}{\mathcal{F}}
\newcommand{\HH}{\mathcal{H}}
\newcommand{\LL}{\mathcal{L}}
\newcommand{\W}{\mathcal{W}}
\newcommand{\R}{\mathbb R}
\newcommand{\N}{\mathbb N}
\newcommand{\D}{\Delta}
\DeclareMathOperator{\supp}{supp}
\DeclareMathOperator{\argmin}{argmin}
\DeclareMathOperator{\Dom}{Dom}
\numberwithin{equation}{section}
\newtheorem{theorem}{Theorem}[section]
\newtheorem{proposition}[theorem]{Proposition}
\newtheorem{lemma}[theorem]{Lemma}
\newtheorem{definition}{Definition}[section]
\newtheorem{remark}{Remark}[section]
\newcommand\redsout{\bgroup\markoverwith{\textcolor{red}{\rule[0.5ex]{2pt}{0.4pt}}}\ULon}
\author[R.-Y. Lai]{Ru-Yu Lai}
\address{School of Mathematics, University of Minnesota, Minneapolis, MN 55455, USA}
\curraddr{}
\email{rylai@umn.edu }
\author[Y.-H. Lin]{Yi-Hsuan Lin}
\address{Department of Mathematics and Statistics, University of Jyv\"askyl\"a, 40014 Jyv\"askyl\"a, Finland}
\curraddr{}
\email{yihsuanlin3@gmail.com}
\author[A. R\"uland]{Angkana R\"uland}
\address{Max-Planck Institute for Mathematics in the Sciences, Inselstraße 22, 04103 Leipzig, Germany}
\curraddr{}
\email{rueland@mis.mpg.de}
\title[A global uniqueness for fractional parabolic equations]{The Calder\'on problem for a space-time fractional parabolic equation}
\begin{document}
\maketitle
\begin{abstract} 
In this article we study an inverse problem for the space-time fractional parabolic operator $(\p_t-\Delta)^s+Q$ with $0<s<1$ in any space dimension. We uniquely determine the unknown bounded potential $Q$ from infinitely many exterior Dirichlet-to-Neumann type measurements. This relies on Runge approximation and the dual global weak unique continuation properties of the equation under consideration. In discussing weak unique continuation of our operator, a main feature of our argument relies on a Carleman estimate for the associated fractional parabolic Caffarelli-Silvestre extension.
Furthermore, we also discuss constructive single measurement results based on the approximation and unique continuation properties of the equation.
	
	\medskip
	
	\noindent{\bf Keywords.} Nonlocal, fractional parabolic, Calder\'on problem, unique continuation property, Runge approximation, degenerate parabolic equations, Carleman estimate.
	
	
\end{abstract}

\tableofcontents

\section{Introduction}
In this article we consider global uniqueness properties for a nonlocal inverse problem for a \emph{space-time} fractional parabolic equation. More precisely, let $T>0$ be a real number and $\Omega$ be a bounded and open set in $\R^n$ for $n\geq 1$, and $Q=Q(t,x)\in L^\infty((-T,T)\times \Omega)$.
Suppose that $u=u(t,x)$ satisfies the following initial exterior value problem:  
\begin{align}\label{noncal_heat}
\begin{cases}
\left( (\p_t-\Delta)^s +Q(t,x)\right)u(t,x)=0 & \hbox{in }\Omega_T:=(-T,T)\times \Omega,\\
u(t,x)=f(t,x) & \hbox{in }(\Omega_e)_T:=(-T,T)\times (\R^n \setminus \overline{\Omega}),\\
u(t,x)=0 & \hbox{for } (t,x)\in (-\infty,-T]\times \R^n,
\end{cases}
\end{align}
where $\Delta=\Delta_x$ and $\Omega_e=\R^n \setminus \overline{\Omega}$.
Then we seek to recover information on $Q$ from the exterior Dirichlet-to-Neumann data
\begin{align}\label{DN map0}
\Lambda_Q: u|_{(\Omega_e)_T}\mapsto (\p_t-\Delta)^su|_{(\Omega_e)_T},
\end{align}
where $u$ is the solution of \eqref{noncal_heat}.
Here $\eqref{DN map0}$ is to be viewed as the nonlocal analogue of the classical Dirichlet-to-Neumann map.
As one of our main results, we prove that it is possible to recover $Q$ in $\Omega_T$ from the measurements of this Dirichlet-to-Neumann map.

Our problem combines two main features by involving a \emph{nonlocal} and \emph{non-self adjoint} operator mixing space and time. This is reflected in our analysis of the problem. Let us discuss these two aspects:
\begin{itemize}
\item \textbf{Non-self adjointness.}
Our inverse problem can be regarded as a nonlocal version of the Calder\'on problem for the local parabolic operator $$\LL:=\p_t-\Delta.$$
In contrast to the fractional Laplacian, both the operator $\LL$ and our nonlocal realization of it, $\LL^s = (\p_t - \Delta)^s$, are \emph{not} self-adjoint operators. As in the case of the Calder\'on problem for the standard heat equation, this is reflected in the central Alessandrini type identities on which our uniqueness arguments rely. 
\item \textbf{Nonlocality in space and time.}
As in the fractional Calder\'on problem, on which there is an increasing amount of literature, the fractional space-time problem studied here involves a nonlocal operator. Comparing the degrees of freedom involved vs the given information, this implies that the inverse problem under consideration is (formally) determined.
As in the fractional Calder\'on problem, this implies that one can not only hope for ``infinite data'' but also for ``single measurement'' uniqueness results.
In the sequel, we prove that indeed both of these hopes are justified:
Exploiting strong approximation results, we first show the global uniqueness in the fractional parabolic equation with (partial) exterior measurements. In particular, given the nonlocal Dirichlet-to-Neumann (DN) map which formally reads
$
\Lambda_Q: u|_{(\Omega_e)_T}\mapsto (\p_t-\Delta)^su|_{(\Omega_e)_T}
$
and $u$ is the solution of \eqref{noncal_heat}, one can recover the unknown bounded potential $Q$ inside $\Omega_T$.
Its rigorous mathematical formulation and the related analysis for the DN map will be discussed in Section \ref{Section 3}. Secondly, following the ideas from \cite{GRSU18}, we further prove a single measurement uniqueness result.

The fractional parabolic equation \eqref{noncal_heat} contains nonlocally coupled space-time derivatives. This special feature distinguishes \eqref{noncal_heat} from equations like $\p_tu+(-\Delta)^s u=f$ and $\p_t^\alpha u+(-\Delta)^su=f$ with $0<\alpha,s<1$, where space and time are ``decoupled''.
In particular, the time variable $t$ in the operator $(\p_t-\Delta)^s$ acts like an additional direction of the space. This has direct  implications on our analysis of the problem: Unlike the local parabolic equation whose well-posedness can be carried out by standard Galerkin formulation,  the operator $(\p_t-\Delta)^s$ possesses certain features of elliptic operators provided that suitable spaces and norms are chosen. This makes it possible to regard the time variable $t$ as an additional direction in space and allows us to formulate the problem as a problem that shares features with elliptic equations. Therefore, with the application of Lax-Milgram arguments, the well-posedness of the fractional parabolic problem holds in a suitable function space, which will be discussed in Section \ref{well-posedness_statement}.
\end{itemize}

We further remark, that the nonlocality of the problem in combination with the non-self adjointness also leads to a ``memory'' (in time) of the equation which could be of interest in certain applications.

\subsection{Literature in nonlocal inverse problems}
Let us comment on the literature related to our problem: The Calder\'on problem is a mathematical model of electrical impedance tomography (EIT) and has been studied intensively, involving various aspects including uniqueness, stability estimates, reconstructions and numerical implementation. For its detailed development, we refer to the survey paper \cite{Gunthersurvey} and the references therein. 

Also, substantial effort has been devoted to the study of its local parabolic version (i.e. for \eqref{noncal_heat} with $s=1$).
In particular, the global uniqueness for the local parabolic equation was studied by \cite{canuto2001determining} for the linear case, and by \cite{klibanov2004global} for the nonlinear case. Moreover, stability properties have been studied in  \cite{choulli2012stability,choulli2006some,gaitan2013stability}. We also refer to a survey article for inverse problems for anomalous diffusion \cite{jin2015tutorial}.

The inverse problem for the nonlocal operator $\LL^s+Q$ however differs rather strongly from its local analogue or the time fractional diffusion equation (see \cite{kian2018global}). It rather resembles properties of the fractional Calder\'on problem whose study had been initiated in \cite{ghosh2016calder} where the authors showed that an unknown potential $q=q(x)$ can be uniquely determined from the DN map, given by
$$
\Lambda_q: u|_{\Omega_e}\mapsto (-\Delta)^su|_{\Omega_e},
$$
for infinitely many (partial data) measurements. Here the function $u=u(x)$ is the unique solution to the fractional Schr\"odinger equation 
\begin{align}\label{noncal_elliptic}
\left\{\begin{array}{ll}
\left( (-\Delta)^s +q\right)u =0 & \hbox{in }\Omega ,\\
u =f & \hbox{in }\Omega_e.
\end{array}\right.
\end{align}
This global uniqueness result in \cite{ghosh2016calder} has been extended to a single measurement in \cite{GRSU18} and to low regularity potentials in \cite{RS17a}. Moreover, it was shown that the problem is logarithmically stable \cite{RS17a} and that this is optimal \cite{RS17c}.  
Closely related (partial data) uniqueness results were studied for different types of fractional equations including the anisotropic nonlocal elliptic problem \cite{ghosh2017calderon} and the semilinear equation which was was studied in \cite{LL17}. Further recent  developments include the study of the fractional Calder\'on problem with lower order contributions \cite{bhattacharyya2018inverse,cekic2018calder} (non-self adjoint problems), reconstruction algorithms based on monotonicity tests \cite{HL17,harrach2019monotonicity2}, the recovery of embedded obstacles \cite{CLL17}, the study of the fractional Helmholtz systems \cite{cao2018determining} and the quantitative approximation properties of nonlocal operators \cite{ruland2017quantitative,rulandsalo2017quantitative}. For further information, we refer to the survey article \cite{Sa17} on inverse problems for fractional elliptic operators.

\subsection{Background and applications}
The derivation of the limiting distribution of an ensemble of particles following a specified stochastic process provides a way to develop physical models for anomalous diffusion. 
The continuous time random walk (CTRW) \cite{BMM2005, MBenson, MWeiss, ScherLax} can be used to determine these limits when the particles' jumps have infinite variance, or the waiting times between the particles' jumps have infinite mean. In particular, large particle jumps and long waiting times are associated with fractional derivatives in space or in time, respectively. When the jump sizes and waiting times are independent, the governing equation generated by the limit process contain fractional order spatial or temporal derivatives. 

In the CTRW the size of the particle jumps can depend on the waiting time between jumps, that is, the jumps and waiting times are coupled. This results in a different kind of limiting particle distribution governed by a fractional differential equation involving coupled space-time fractional derivative operators \cite{BMS2004, MBenson}. These coupled CTRW have been studied in a variety of physical systems \cite{Metzler} and have been used to describe transport in chaotic and turbulent flows in hydrodynamics \cite{SWK1987, SWS1993}. For a detailed discussion and applications, we refer to \cite{BMM2005, KRSbook, MBenson} and the references therein. We note that
the fractional heat equation \eqref{noncal_heat} is the governing equation whose random jumps coupled with the random waiting times, while $\p_t^\alpha u+(-\Delta)^su=f$ describes the jumps and the waiting times are dependent.

\subsection{Main results}

Let us describe our main results:
For $0<s<1$, the fractional heat operator $\mathcal{L}^su$ of a function $u=u(t,x):\R^{n+1}\rightarrow \R$, $n\geq 1$, is given by 
\begin{align}
\label{eq:op}
\widehat{\LL^su}(\rho,\xi)=(i\rho+|\xi|^2)^s\widehat{u}(\rho,\xi), \quad \text{ for }u\in \mathcal{S}(\R^{n+1}),
\end{align}
where $\mathcal S( \R^{n+1})$ denotes the Schwartz space of rapidly decreasing functions, for $\rho\in \R$ and $\xi\in\R^n$.

We assume that $0$ is not a Dirichlet eigenvalue of \eqref{noncal_heat}, i.e.,
\begin{align}\label{eigenvalue condition} 
\begin{cases}
\hbox{If }u\in\HH^s(\R^{n+1})\ \hbox{solves }(\LL^s+Q)u=0\ \hbox{in }\Omega_T \\
\hbox{with $u|_{(\Omega_{e})_T}=0$} \text{ and }u=0 \text{ for }(t,x)\in (-\infty, -T]\times \R^n, \\
 \hbox{then }u\equiv 0 \text{ in }(-\infty,T)\times \R^n.
\end{cases}
\end{align} 
Notice that the condition \eqref{eigenvalue condition} only ensures the solution $u\equiv 0$ in $\Omega_T$ (since the initial value and the boundary data are zero there) while not necessarily imposing conditions on the future behavior of the solution. The function space $\HH^s(\R^{n+1})$ will be explained in detail in Section \ref{Section 2}. As a consequence of \eqref{eigenvalue condition} we will deduce well-posedness of the corresponding Dirichlet problem.
Thus, for example, when $Q(t,x)\geq 0$ for $(t,x)\in (-T,T]\times \Omega$, the exterior value problem \eqref{noncal_heat} is well-posed (see Section \ref{Section 3}).
Given the assumption \eqref{eigenvalue condition}, the associated (parabolic) \emph{Dirichlet-to-Neumann} (DN) map is can be formally defined by
\begin{align}\label{DN map}
\Lambda_Q :\mathbb X \rightarrow \mathbb X^*\quad \text{ with }\quad 
\Lambda_Q: f\mapsto \LL^su|_{(\Omega_{e})_T},
\end{align}
where function spaces $\mathbb X$, $\mathbb X^*$ will be explained precisely in Section \ref{Section 3}.

With this notation in hand, it is possible to present our first main result:

\begin{theorem}[Global uniqueness]\label{MAIN THEOREM}
	Let $n\geq 1$, $\Omega\subset \R^n$ be a bounded open set and $T\in (0,\infty)$ be a real number. Let $Q_1(t,x)$ and $Q_2(t,x)$ be two bounded potentials satisfying the eigenvalue condition \eqref{eigenvalue condition}. Suppose that $\mathcal U_1,\mathcal U_2$ are arbitrary open sets in $\R^n\setminus \overline{\Omega}$ and $\Lambda_{Q_j}$ is the DN map with respect to $(\LL^s+Q_j)u=0$ in $\Omega_T$ for $j=1,2$. If 
	\begin{align*}
	\Lambda_{Q_1}f|_{(-T,T)\times\mathcal U_2}=\Lambda_{Q_2}f|_{(-T,T)\times\mathcal U_2}\text{ for all }f=f(t,x)\in C^\infty_c((-T,T)\times\mathcal U_1),
	\end{align*}
	then it holds
	\begin{align*}
	Q_1=Q_2 \text{ in }\Omega_T.
	\end{align*}
\end{theorem}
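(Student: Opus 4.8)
The plan is to follow the by-now standard strategy for fractional Calder\'on-type problems, adapted to the non-self-adjoint parabolic setting. First I would set up the bilinear form associated with $\LL^s + Q$ and derive the \emph{Alessandrini-type identity}: for solutions $u_1$ of $(\LL^s + Q_1)u_1 = 0$ with exterior data $f_1$ supported in $(-T,T)\times\mathcal U_1$ and solutions $u_2^*$ of the \emph{adjoint} equation $((\LL^*)^s + Q_2)u_2^* = 0$ with exterior data $f_2$ supported in $(-T,T)\times\mathcal U_2$, the assumed equality of the DN maps on the relevant exterior sets forces
\begin{align*}
\int_{\Omega_T} (Q_1 - Q_2)\, u_1\, \overline{u_2^*}\, dx\, dt = 0.
\end{align*}
Here the non-self-adjointness is exactly what is handled in the earlier sections (the DN map maps into $\mathbb X^*$ and the natural pairing involves the backward-in-time adjoint parabolic operator $\LL^* = -\p_t - \Delta$); I would quote that functional-analytic framework rather than re-deriving it.

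Second, the decisive ingredient is a \emph{Runge approximation} property: the set of restrictions to $\Omega_T$ of solutions $u_1$ (resp.\ $u_2^*$) with exterior data supported in the prescribed open sets is dense in $L^2(\Omega_T)$, or in the appropriate dual space. This density is obtained in the usual way by a Hahn--Banach / duality argument whose core is a \emph{global weak unique continuation property} (UCP): if $v$ and $\LL^s v$ (or the adjoint counterpart) both vanish on an open exterior set, then $v \equiv 0$. The paper's abstract and introduction indicate that this UCP is established via a Carleman estimate for the fractional parabolic Caffarelli--Silvestre extension, treating $t$ as an extra spatial variable; I would invoke that UCP and the resulting Runge approximation as black boxes proved in the earlier sections.

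Third, I would combine the two: given any $g \in C_c^\infty(\Omega_T)$, approximate it in $L^2(\Omega_T)$ by traces of admissible $u_1$'s; simultaneously approximate a suitable test function by traces of admissible $u_2^*$'s, so that the product $u_1 \overline{u_2^*}$ approximates $g$ (or, more carefully, so that we can isolate $(Q_1 - Q_2)$ tested against an arbitrary smooth function). Passing to the limit in the Alessandrini identity, using the $L^\infty$ bound on $Q_1 - Q_2$ to control the error terms, yields $\int_{\Omega_T}(Q_1 - Q_2)\,g\,dx\,dt = 0$ for all such $g$, hence $Q_1 = Q_2$ in $\Omega_T$.

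The main obstacle, and the technically heaviest point, is the global weak unique continuation property underpinning Runge approximation: because $\LL^s$ couples space and time nonlocally and $\LL$ is not elliptic, the clean elliptic extension arguments of Ghosh--Salo--Uhlmann do not apply directly, and one must work with the degenerate parabolic extension problem and prove a Carleman estimate there that is strong enough to propagate vanishing from the exterior set. A secondary delicate point is keeping careful track of the correct (adjoint) function spaces $\mathbb X$, $\mathbb X^*$ in the Alessandrini identity so that the non-self-adjointness does not introduce spurious boundary-in-time terms; this is where the ``memory in time'' feature mentioned in the introduction must be handled with the zero initial condition at $t = -T$ doing the necessary work. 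Both of these are, however, supplied by the earlier sections, so the proof of Theorem \ref{MAIN THEOREM} itself reduces to assembling them as above.
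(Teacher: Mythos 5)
Your proposal is correct and follows essentially the same route as the paper: the Alessandrini identity of Lemma \ref{Lem Integral identitiy} (with $u_1$ solving the forward equation for $Q_1$ and $u_2$ solving the adjoint equation for $Q_2$), combined with the Runge approximation of Lemma \ref{Lemma Runge approximation} and its adjoint variant (both resting on the global weak UCP proved via the Carleman estimate for the extension), letting $u_1\to g$ and $u_2\to 1$ in $L^2(\Omega_T)$ and passing to the limit. The only cosmetic difference is that the paper fixes the second family of solutions to approximate the constant function $1$ rather than a general test function, which is exactly the ``isolate $(Q_1-Q_2)$ against an arbitrary $g$'' step you describe.
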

Note that, to determine the potential, we only utilize the exterior partial measurements as described in Theorem \ref{MAIN THEOREM}. This can be regarded as a space-time nonlocal parabolic inverse problem with partial data information. 

As in the elliptic fractional Calder\'on problem, an important ingredient in the derivation of the result are strong Runge approximation properties:

\begin{theorem}[Runge approximation]\label{Thm Runge approx.}
	For $n\geq 1$ and $T>0$, let $\Omega\subset \R^n$ be a bounded open set and $\widetilde \Omega$ be an open set containing $\Omega$ satisfying $\Omega\Subset \widetilde \Omega$. If $Q(t,x)\in L^\infty(\Omega_T)$ satisfies \eqref{eigenvalue condition}, then for any $g\in L^2(\Omega_T)$ and for any $\epsilon>0$, there exists a solution $u_\epsilon\in \HH^s(\R^{n+1})$ which solves
	\begin{align*}
	(\LL^s+Q)u_\varepsilon=0 \text{ in }\Omega_T \text{ with }\mathrm{supp}(u_\epsilon )\subset  \widetilde \Omega_T:=(-T,T)\times\widetilde{\Omega},
	\end{align*}
	and satisfies
	\begin{align*}
	\|u_\epsilon -g\|_{L^2(\Omega_T)}<\epsilon.
	\end{align*} 
\end{theorem}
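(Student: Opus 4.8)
\emph{The plan.} I would establish Theorem~\ref{Thm Runge approx.} as a density statement via a Hahn--Banach argument, reducing it to the (dual global weak) unique continuation property for the adjoint operator $(\LL^s)^{*}=(-\p_t-\Delta)^s$. Put
\[
\mathcal{R}:=\bigl\{\,u|_{\Omega_T}:\ u\in\HH^s(\R^{n+1}),\ (\LL^s+Q)u=0\ \text{in}\ \Omega_T,\ \supp(u)\subset\widetilde\Omega_T\,\bigr\}\subset L^2(\Omega_T).
\]
Since $L^2(\Omega_T)$ is a Hilbert space, $\mathcal{R}$ is dense if and only if its orthogonal complement is trivial, so it suffices to show that any $g\in L^2(\Omega_T)$ with $(g,v)_{L^2(\Omega_T)}=0$ for all $v\in\mathcal{R}$ vanishes identically; the asserted $u_\epsilon$ is then any element of $\mathcal{R}$ with $\|u_\epsilon-g\|_{L^2(\Omega_T)}<\epsilon$. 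Each $v\in\mathcal{R}$ is of the form $v=u_f|_{\Omega_T}$ with $f$ supported in the open exterior cylinder $W_T:=(-T,T)\times(\widetilde\Omega\setminus\overline\Omega)$, where $u_f\in\HH^s(\R^{n+1})$ is the unique solution of \eqref{noncal_heat} with potential $Q$ and exterior datum $f$; this well-posedness is exactly what the eigenvalue condition \eqref{eigenvalue condition} and the analysis of Section~\ref{Section 3} supply.

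Given such a $g$ (extended by zero), I would next solve the \emph{dual} exterior problem: find $\phi\in\HH^s(\R^{n+1})$ with
\[
\bigl((\LL^s)^{*}+Q\bigr)\phi=g\ \text{in}\ \Omega_T,\qquad \phi=0\ \text{in}\ (\Omega_e)_T,\qquad \phi=0\ \text{for}\ t\geq T,
\]
where $(\LL^s)^{*}=(-\p_t-\Delta)^s$ is the adjoint of $\LL^s$, with Fourier symbol $(-i\rho+|\xi|^2)^s$ (cf.\ \eqref{eq:op}). This is the time-reversed counterpart of \eqref{noncal_heat}, hence again well posed under the corresponding (time-reversed) eigenvalue condition. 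Using the weak formulations of $u_f$ and $\phi$ together with the adjointness relation $\langle\LL^s a,b\rangle=\langle a,(\LL^s)^{*}b\rangle$, the interior contributions cancel; after treating the terms supported near the temporal ends $t=\pm T$ (which is exactly where the structure of the function spaces in Section~\ref{Section 2} enters) one is left, for every $f\in C^\infty_c(W_T)$, with an identity of the schematic form
\[
0=(g,u_f)_{L^2(\Omega_T)}=\pm\,\bigl\langle (\LL^s)^{*}\phi,\ f\bigr\rangle_{W_T},
\]
i.e.\ the pairing of the exterior Dirichlet datum $f$ of $u_f$ with the exterior Neumann-type datum $(\LL^s)^{*}\phi|_{W_T}$ of the adjoint solution.

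As $f$ ranges over $C^\infty_c(W_T)$, this forces $(\LL^s)^{*}\phi=0$ in $W_T$, while $\phi=0$ in $W_T\subset(\Omega_e)_T$ holds by construction. Invoking the \emph{dual global weak unique continuation property} for $(\LL^s)^{*}$ --- established in the paper through the Carleman estimate for the fractional parabolic Caffarelli-Silvestre extension --- we conclude $\phi\equiv0$, whence $g=((\LL^s)^{*}+Q)\phi=0$ in $\Omega_T$. This yields the triviality of the orthogonal complement and therefore Theorem~\ref{Thm Runge approx.}.

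The deepest input is the dual global weak unique continuation property used in the last step, which I would treat here as a black box established separately. Within the proof of the Runge theorem itself the work is of two kinds: first, the careful bookkeeping in the function-space framework of Sections~\ref{Section 2}--\ref{Section 3} --- pinning down the exterior-data and test-function subspaces of $\HH^s(\R^{n+1})$, the duality pairings on them, and the rigorous meaning of the Neumann-type trace $(\LL^s)^{*}\phi|_{(\Omega_e)_T}$, which also governs the contributions near the time ends --- and second, organizing the duality computation so that precisely the two hypotheses ``$\phi=0$ and $(\LL^s)^{*}\phi=0$ in $W_T$'' needed for unique continuation are produced. I expect this last point, together with verifying well-posedness of the \emph{adjoint} problem, to be the main obstacle of the proof proper; note that, because $\LL^s$ is not self-adjoint, it is essential that the unique continuation statement employed is the one for the adjoint operator, which is why the ``dual'' version enters.
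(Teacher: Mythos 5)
Your proposal is correct and follows essentially the same route as the paper: a Hahn--Banach (orthogonal complement) argument, passing to the adjoint problem $((- \p_t-\Delta)^s+Q)\phi=g$ with vanishing exterior and future data, extracting $\phi=0$ and $(-\p_t-\Delta)^s\phi=0$ on the exterior cylinder from the duality identity, and invoking the global weak unique continuation property for the adjoint operator (which the paper records in Remark \ref{remark for strong unique continuation for adjoint}, with adjoint well-posedness handled in Remark \ref{rmk:adjoint}, where the adjoint eigenvalue condition is shown to be equivalent to \eqref{eigenvalue condition} by the Fredholm alternative rather than being a separate hypothesis). The details you flag as remaining work are exactly the ones the paper supplies in Sections \ref{Section 3} and \ref{Section 6}.
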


We remark that in the works \cite{dipierro2016local,ruland2017quantitative, CDV18, CDV18a}, the authors studied the approximation property for a different class of operators, which involves combinations of both local and nonlocal operators.
Similarly as in the elliptic setting, the derivation of our Runge approximation results relies on a \emph{global weak unique continuation property} of the nonlocal equation:

\begin{theorem}[Global weak unique continuation]\label{Thm UCP}
	For $n\geq 1$, let $T>0$ be a real number and $\mathcal U\subset \R^{n}$ be an arbitrary nonempty open set.  Let $u\in \HH^s({ \R^{n+1}})$ satisfy $u=\LL^su=0$ in $(-T,T)\times \mathcal U$. Then $u(t,x)\equiv 0 $ for $(t,x)\in  (-T,T)\times \R^n$.
\end{theorem}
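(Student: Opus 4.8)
The plan is to follow the strategy that is by now standard for the fractional Laplacian, namely to lift the nonlocal equation to a degenerate (parabolic) local equation via the Caffarelli--Silvestre type extension, and then to apply a unique continuation result for that extension across the hyperplane $\{y=0\}$, where $y>0$ denotes the extension variable. Concretely, the first step is to recall that for $u\in\HH^s(\R^{n+1})$ there is an extension $U=U(t,x,y)$, solving an equation of the form $\nabla_{t,x,y}\cdot\big(y^{1-2s}A\,\nabla_{t,x,y}U\big)=0$ (in an appropriate weighted sense, with the first-order-in-$t$ structure of $\p_t-\Delta$ built into the matrix $A$, so the equation is parabolic-type in $(t,x)$ and elliptic in $y$) in the upper half-space $\{(t,x,y): y>0\}$, with $U|_{y=0}=u$ and the weighted co-normal derivative $\lim_{y\to 0^+} y^{1-2s}\p_y U$ proportional to $\LL^s u$. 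Thus the hypothesis $u=\LL^s u=0$ in $(-T,T)\times\mathcal U$ translates into $U=0$ and the weighted normal derivative of $U$ vanishing on the open set $(-T,T)\times\mathcal U\times\{0\}$ of the boundary.

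The second step is to even-reflect (after the $y^{1-2s}$ weight is taken into account, this is the natural Neumann reflection) the extension $U$ across $\{y=0\}$ over the region lying above $(-T,T)\times\mathcal U$, obtaining a function defined in a full two-sided neighborhood of that piece of the hyperplane which solves the degenerate parabolic equation there and vanishes on an open subset of positive measure (indeed in an open set, since $U$ and its weighted normal derivative both vanish on $(-T,T)\times\mathcal U\times\{0\}$). One then invokes a weak/strong unique continuation principle for the degenerate parabolic Caffarelli--Silvestre extension — this is exactly the point where the Carleman estimate advertised in the abstract enters — to conclude that $U\equiv 0$ in the connected component of $\{y>0\}$ reaching that neighborhood, hence $U\equiv 0$ in all of $\{y>0\}$ by connectedness of the upper half-space. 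Taking the trace $y\to 0^+$ gives $u\equiv 0$ on all of $\R^{n+1}$, and in particular on $(-T,T)\times\R^n$. (One should be slightly careful here about the initial/causality condition built into $\HH^s$, but since the claim is only for $t<T$ this causes no additional difficulty.)

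The main obstacle, and the genuinely new analytic content relative to the elliptic fractional Calder\'on problem, is the unique continuation step for the \emph{parabolic} degenerate extension: the operator $\p_t-\Delta$ is not self-adjoint, so one cannot symmetrize it, and the extension equation is degenerate at $y=0$ because of the $y^{1-2s}$ weight and anisotropic because time behaves like an extra space variable of a different order. Proving the required Carleman estimate therefore demands a carefully chosen weight function adapted simultaneously to the parabolic scaling in $(t,x)$, to the degeneracy in $y$, and to the non-self-adjoint first-order term; conjugating the operator by the exponential weight and controlling the resulting commutator — splitting it into symmetric and antisymmetric parts and absorbing error terms using a Hardy-type inequality in the $y$-variable to handle the degenerate weight — is where the bulk of the work lies. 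Once the Carleman estimate is in hand, the passage to weak unique continuation across the hyperplane, and then the descent to the statement for $u$, are essentially routine and follow the template of \cite{ghosh2016calder,GRSU18}.
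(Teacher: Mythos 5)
Your overall plan (lift to the degenerate parabolic Caffarelli--Silvestre extension, then run a Carleman-based unique continuation argument from the boundary piece $(-T,T)\times\mathcal U\times\{0\}$) is indeed the strategy of the paper, but two of your steps contain genuine errors and the step you call ``essentially routine'' is where most of the actual work lies. First, after the even reflection you claim the reflected function ``vanishes \ldots{} in an open set, since $U$ and its weighted normal derivative both vanish on $(-T,T)\times\mathcal U\times\{0\}$.'' This is not correct: the vanishing of $U$ and of $\lim_{y\to0^+}y^{1-2s}\p_y U$ is vanishing of \emph{Cauchy data on a hypersurface}, a set of measure zero in the two-sided domain, not vanishing on an open set. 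Obtaining vanishing in a full neighbourhood is precisely the boundary unique continuation problem one has to solve, and it requires quantitative information on how $U$ decays as $y\to 0$. In the paper this is supplied by a separate and non-trivial step (Lemma \ref{lem:inf_van_order}): a Schauder-plus-bootstrap argument showing that $\tilde u$ vanishes to \emph{infinite order} in $x_{n+1}$ on the flat piece; this is what later beats the factor $\delta^{-3(\tau+1)}$ coming from the cut-off near the boundary in the Carleman argument. Your proposal never addresses this, and without it the Carleman estimate with a weight that is singular at the boundary cannot be closed.

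Second, your conclusion ``$U\equiv 0$ in the connected component of $\{y>0\}$ \ldots{} hence $U\equiv 0$ in all of $\{y>0\}$ by connectedness, and taking the trace gives $u\equiv 0$ on all of $\R^{n+1}$'' is false and rests on an elliptic intuition that does not apply here. Unique continuation for parabolic (and degenerate parabolic) operators is \emph{spacelike}: vanishing of Cauchy data on $(-T,T)\times\mathcal U\times\{0\}$ propagates in the spatial variables on the corresponding time slices, but it cannot be propagated forward in time, no matter how connected the half-space is. This is exactly the asymmetry the paper points out in the remark following Theorem \ref{Thm UCP}, and it is why the theorem only asserts $u\equiv 0$ on $(-T,T)\times\R^n$ rather than on $\R^{n+1}$. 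The correct argument (as in Proposition \ref{prop:global_UCP}) uses the Carleman estimate, in parabolic conformal coordinates where the operator becomes $\p_\ell+H_s$ with $H_s$ a fractional Hermite operator, to show that $\tilde u(t_0,\cdot)$ vanishes on a fixed half-ball for each relevant time $t_0$, and then uses translation invariance in $(t,x')$ together with spatial unique continuation in $\{x_{n+1}>0\}$ to fill out $(-T,T)\times\R^n$; no statement about times $t\geq T$ is, or can be, made. Finally, your sketch of the Carleman estimate (Hardy inequality in $y$, absorption of the first-order term) is speculative and not what is needed: in the paper the non-self-adjoint $\p_\ell$ term is the antisymmetric part, the commutator is simply $\tau h''$ for a convex weight $h$, and the boundary terms vanish because of the Neumann condition -- but none of this is supplied in your proposal, so as it stands the argument is an outline with the two gaps above rather than a proof.
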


\begin{remark}
We make the following observations:
\begin{itemize}
\item[1. ] A major difference between the nonlocal elliptic and the nonlocal parabolic cases is the effect of their strong uniqueness properties. Indeed, Theorem \ref{Thm UCP} displays a non-symmetric behaviour with respect to the spatial and temporal variables: While there is a strong nonlocal effect in the spacial $x$-variable, it is not possible to propagate the global vanishing in the future $t$-direction.

\item[2. ] In order to be applied in the context of our inverse problems, it is important to have a global weak unique continuation result at our disposal which still holds if the condition $\mathcal{L}^s u = 0 = u$ only holds \emph{locally} in space and time. While both strong and weak unique continuation results for the operator $\LL^s$ are available if the equation is assumed to hold \emph{globally} (see \cite{BG2017}), we here provide an alternative \emph{local} proof of the global weak unique continuation property based on an appropriate Carleman estimate.

\item[3. ] When the function $u(t,x)\equiv u(x)$ is a time-independent function, then by virtue of \cite[Corollary 1.4]{stinga2017regularity}, the fractional space-time operator $\LL^s$ simplifies to the fractional Laplacian: $\LL^s u = (-\Delta )^su$. For this operator the analogue of the global weak unique continuation property of Theorem \ref{Thm UCP} has been proved in \cite[Theorem 1.2]{ghosh2016calder}, see also \cite{DSV17}. 
\end{itemize}
\end{remark}

Last but not least, similarly as in \cite{GRSU18}, the global weak unique continuation properties of the equation at hand, also allow us to constructively recover $Q\in C^0((-T,T)\times \Omega)$.

\begin{theorem}
\label{thm:single_measurement}
Let $s\in (0,1)$, $T>0$ and $Q \in C^0((-T,T)\times \Omega)$. Let $W \subset \Omega_e$ such that $\overline{W}\cap \overline{\Omega} = \emptyset$. Let $f\in \widetilde{\mathcal{H}}^s((-T,T)\times W) \setminus \{0\}$. Then there is a reconstructive method to uniquely recover the potential $Q$ from $f$ and $\Lambda_Q(f)$.
\end{theorem}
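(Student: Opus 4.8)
The plan is to adapt the single–measurement reconstruction scheme of \cite{GRSU18} to the present non–self adjoint parabolic situation; the two ingredients that make it work are the correct non–self adjoint Alessandrini identity for $\LL^s$ and the weak unique continuation property of Theorem \ref{Thm UCP}. Fix the exterior datum $f$ and let $u_f\in\HH^s(\R^{n+1})$ be the corresponding solution of \eqref{noncal_heat} with potential $Q$ (it exists and is unique since $\Lambda_Q(f)$ is assumed to be defined, i.e. \eqref{eigenvalue condition} holds for $Q$). For a test datum $g\in C^\infty_c((-T,T)\times W)$, let $v_g\in\HH^s(\R^{n+1})$ solve the \emph{free adjoint} problem $(\LL^s)^\ast v_g=0$ in $\Omega_T$, $v_g=g$ in $(\Omega_e)_T$, with the vanishing \emph{final} condition $v_g=0$ for $t\ge T$; since $(\LL^s)^\ast=(-\p_t-\Delta)^s$ is the time reversal of $\LL^s$, the well–posedness theory of Section \ref{Section 3} applies verbatim. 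The Alessandrini–type identity of Section \ref{Section 3}, applied to the pair of potentials $(Q,0)$, then reads
\begin{align*}
\langle (\Lambda_Q-\Lambda_0)f,\,g\rangle \;=\; \int_{\Omega_T} Q\, u_f\, v_g\,dx\,dt .
\end{align*}
Here $\Lambda_0$ is the exterior DN map of the \emph{known} operator $\LL^s$ and is hence explicitly computable, and $f$ is given, so the left-hand side, and therefore each number $\int_{\Omega_T}(Q u_f)\,v_g\,dx\,dt$, is known for every admissible $g$, even though the function $Q u_f\in L^2(\Omega_T)$ is not yet known.

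\smallskip\noindent\textbf{Step 1: reconstruction of $Qu_f$.} The key point is the dual Runge approximation property: the family $\{v_g|_{\Omega_T}:g\in C^\infty_c((-T,T)\times W)\}$ is dense in $L^2(\Omega_T)$. This follows from Theorem \ref{Thm UCP} for the free operator, exactly as in the proof of Theorem \ref{Thm Runge approx.}: if $\psi\in L^2(\Omega_T)$ annihilates all $v_g|_{\Omega_T}$, one solves the forward free problem $\LL^s w=\psi$ in $\Omega_T$, $w=0$ in $(\Omega_e)_T$, $w=0$ for $t\le -T$, and integrates by parts against $v_g$ to find that $w=\LL^s w=0$ in $(-T,T)\times W$; Theorem \ref{Thm UCP} then forces $w\equiv 0$ in $(-T,T)\times\R^n$, whence $\psi=\LL^s w\equiv 0$ in $\Omega_T$. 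Since $\big|\int_{\Omega_T}(Qu_f)v_g\big|\le\|Qu_f\|_{L^2(\Omega_T)}\|v_g\|_{L^2(\Omega_T)}$, the (known) linear functional $v_g|_{\Omega_T}\mapsto\langle(\Lambda_Q-\Lambda_0)f,g\rangle$ extends uniquely to a bounded functional on $L^2(\Omega_T)$, which is represented precisely by $Q u_f$. This determines $Q u_f$, and the reconstruction is constructive: $Q u_f$ is the $L^2(\Omega_T)$–limit of the solutions of finite-dimensional least-squares (Galerkin) problems built from the constructively available family $\{v_g\}$, optionally with regularization, cf. the analogous scheme in \cite{GRSU18}.

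\smallskip\noindent\textbf{Step 2: reconstruction of $u_f$ and of $Q$.} With $h:=-Q u_f\in L^2(\Omega_T)$ now known, observe that $u_f$ solves the forward inhomogeneous problem $\LL^s u_f=h$ in $\Omega_T$, $u_f=f$ in $(\Omega_e)_T$, $u_f=0$ for $t\le -T$, for the \emph{free} operator; by well–posedness (Lax--Milgram, Section \ref{Section 3}) this determines $u_f$ in all of $\R^{n+1}$. By interior regularity for $\LL^s$ with bounded potential, $u_f$ is continuous in $\Omega_T$, so $\{u_f\ne 0\}$ is open; moreover it is dense in $\Omega_T$, since if $u_f$ vanished on a nonempty open $V\subset\Omega_T$ then also $\LL^s u_f=-Q u_f=0$ on $V$, and Theorem \ref{Thm UCP} would give $u_f\equiv 0$ in $(-T,T)\times\R^n$, contradicting $u_f|_{(\Omega_e)_T}=f\neq 0$. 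On the open dense set $\{u_f\ne 0\}$ we therefore have $Q=(Q u_f)/u_f$, a quotient of already reconstructed continuous functions, and since $Q\in C^0((-T,T)\times\Omega)$ this determines $Q$ on all of $\Omega_T$ by continuity.

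\smallskip\noindent The main obstacle is Step 1: establishing the dual Runge approximation in $L^2(\Omega_T)$ with exterior data localized in the \emph{fixed} set $W$, and upgrading the abstract density to a genuinely constructive (quantitative) scheme; both hinge on the weak unique continuation property of Theorem \ref{Thm UCP} and on the precise non–self adjoint Alessandrini identity from Section \ref{Section 3}.
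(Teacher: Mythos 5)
Your outline is essentially correct, but it follows a genuinely different route from the paper. The paper never invokes an Alessandrini identity for the single-measurement result: it splits $u=v+f$ with $v\in\widetilde{\HH}^s((-T,T)\times\Omega)$, notes that $(\p_t-\D)^s v=\Lambda_Q f-(\p_t-\D)^s f$ is known on $(-T,T)\times W$, and reconstructs $v$ (hence $u$) directly by the Tikhonov scheme of Proposition \ref{prop:Tikh}, whose convergence rests on the compactness of $v\mapsto(\p_t-\D)^s v|_{(-T,T)\times W}$ (pseudolocality, or the kernel representation \eqref{eq:integral_rep}) together with the injectivity supplied by the weak unique continuation property; only then does it form $Q=-(\p_t-\D)^s u/u$. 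You instead first recover the product $Qu_f$ from the pairings $\langle(\Lambda_Q-\Lambda_0)f,g\rangle$ via Lemma \ref{Lem Integral identitiy} with reference potential $0$ and the adjoint Runge density (the remark following Lemma \ref{Lemma Runge approximation}), and only afterwards recover $u_f$ by solving a well-posed auxiliary forward problem with the now-known source $-Qu_f$. Both schemes use exactly the same data (the restriction of $\Lambda_Q f$ to $(-T,T)\times W$), and your version has the appeal of reducing the unknown to a single linear moment problem against explicitly computable free adjoint solutions; its cost is that the constructive content of your Step 1 is left generic (``Galerkin least squares, optionally with regularization''), whereas Proposition \ref{prop:Tikh} is precisely where the paper supplies a concrete regularization with a convergence proof. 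To claim a \emph{reconstructive} method in the same sense you should make Step 1 equally concrete, e.g.\ Gram--Schmidt the computable family $\{v_g|_{\Omega_T}\}$ or set up a Tikhonov scheme for the corresponding compact operator; this is routine but needs to be said.

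One substantive caveat, which your argument shares with (and does not worsen relative to) the paper's own proof, concerns the density of $\{u_f\neq0\}$ in $\Omega_T$. Theorem \ref{Thm UCP} (Proposition \ref{prop:global_UCP}) propagates vanishing only within the same time interval: from $u_f=\LL^s u_f=0$ on an open set of the form $(t_1,t_2)\times B\subset\Omega_T$ one gets $u_f\equiv0$ on $(t_1,t_2)\times\R^n$, not on all of $(-T,T)\times\R^n$, so your claimed contradiction with $f\neq0$ does not follow when $f$ happens to vanish on the slab $(t_1,t_2)$; in that scenario the exterior values vanish there consistently, and the single measurement would in fact carry no information about $Q$ on $(t_1,t_2)\times\Omega$. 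The paper's proof of Theorem \ref{thm:single_measurement} asserts the same nonvanishing-sequence property with no more justification, so this is a shared gap rather than a defect specific to your approach; closing it in either version requires an additional argument propagating the vanishing in time (or an extra hypothesis on $f$, e.g.\ nonvanishing on every time slice).
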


As in \cite{GRSU18} the key to this single measurement result is the formal determinedness of the inverse problem under consideration in combination with the global weak unique continuation results.

\subsection{Outline}
The paper is structured as follows. In Section \ref{Section 2}, we review basic properties for the nonlocal parabolic operator $\LL^s$ and introduce the related function spaces. The well-posedness of the Dirichlet problem \eqref{noncal_elliptic} and the associated Dirichlet-to-Neumann map $\Lambda_Q$ will be discussed in Section \ref{Section 3}. In Section \ref{Section 4}, 
we study the extension problem for the nonlocal parabolic equation. 
Relying on the properties of the parabolic Caffarelli-Silvestre extension, we discuss the global weak unique continuation property in Section \ref{Section 5}. Here we also derive a suitable Carleman estimate for the equation under consideration.
Finally, the Runge approximation property and all remaining main results of the article will be deduced in Section \ref{Section 6}.

\section{Preliminary results}\label{Section 2}
We start by defining the space-time fractional parabolic operator $\LL^s=(\partial_t -\Delta)^s$ for $0<s<1$ and recalling some of its properties.

\subsection{The fractional parabolic operator}
For $0<s<1$, the nonlocal operator $\LL^s$ is defined through its Fourier representation
$$
\widehat{\LL^s u}(\rho,\xi)= (i\rho+|\xi|^2)^s\widehat{u}(\rho,\xi), \quad \text{ for }u\in \mathcal{S}(\R^{n+1}),
$$
where $\mathcal S( \R^{n+1})$ denotes the Schwartz space of rapidly decreasing functions. 
Here $\widehat{u}$ is the Fourier transform of $u$ defined by 
$$
\widehat{u}(\rho,\xi)=[\mathcal{F}_t(\mathcal F_x u)](\rho,\xi)=\int_{\R^{n+1}} e^{-i(t,x)\cdot (\rho,\xi)} u(t,x)dtdx,\text{ for }\rho \in \R\text{ and }\xi \in \R^n,
$$
where $\mathcal F_t$ and $\mathcal F_x$ are the Fourier transformations in $t\in \R$ and $x\in \R^n$, respectively.
As a function space which is naturally associated with this operator we consider the $L^2$-based function space
$$
\text{Dom}(\LL^s) := \{u\in L^2(\R^{n+1}):\ (i\rho+|\xi|^2)^{s} \widehat{u}(\rho,\xi) \in L^2(\R^{n+1})\}.
$$
We note that then in particular $\text{Dom}(\LL^s)\subset L^2(\R^{n+1})$.

\begin{remark}
\label{rmk:semi_group}
We remark that in addition to the described Fourier point of view, it is also possible to adopt a semi-group perspective in defining this operator. 
From this, one obtains the representation
$$
    \LL^su(t,x) = {1\over \Gamma(-s) }\int^\infty_0 \LC e^{-\tau \LL} u(t,x)-u(t,x)\RC {d\tau\over \tau^{1+s}}\ \ \hbox{in }L^2(\R^{n+1}), \ u \in \Dom(\LL^s),
$$
which can also be reformulated in terms of an integral representation
\begin{align}
\label{eq:integral_rep}
\LL^s u(t,x) = \int^\infty_0 \int_{\R^n} (u(t,x)-u(t-\tau,x-z)) K_s(\tau,z) dzd\tau, \ u \in \Dom(\LL^s),
\end{align}
where 
	\begin{align*}
	K_s(\tau,z) = {1\over (4\pi)^{n/2} |\Gamma(-s)|} {e^{-|z|^2/(4\tau)} \over \tau^{n/2+1+s}},\ \ \tau>0,\ \ z\in\R^n
	\end{align*}
	is the kernel of $\LL^s$.
We refer to \cite{stinga2017regularity} for a more detailed discussion on this. 
\end{remark}

\subsection{Function spaces}\label{function space}
Given an open set $O\subset \R^{n+1}$, if $f=f(t,x)$ and $g=g(t,x)$ are $L^2$ functions in $O$, we denote the $L^2$ inner product by 
\begin{align*}
( f,g )_O:=\int_{O} f\overline{g}\ dxdt.
\end{align*}
For the nonlocal space-time operator $\LL^s=(\partial_t-\Delta)^s$, we define the following associated function spaces, which are slightly different from the usual fractional Sobolev spaces. For any $a\in \R$, we set 
\begin{align*}
\HH^a (\R^{n+1}):=\left\{u\in L^2(\R^{n+1}): \|u\|_{\HH^a(\R^{n+1})}<\infty \right\},
\end{align*}
where 
$$
\|u\|_{\HH^{a}(\R^{n+1})}^2 = \int_{\R^{n+1}} (1+|i\rho+|\xi|^2|)^{a} |\widehat{u}(\rho,\xi)|^2 d\rho d\xi <\infty.
$$
Note that $|i\rho +|\xi|^2|=\left(|\rho|^2+|\xi|^4\right)^{1/2}$ and $2^{-1/2}(|\rho|+|\xi|^2)\leq (|\rho|^2+|\xi|^4)^{1/2}\leq |\rho|+|\xi|^2$. It is not hard to see that $ \text{Dom}(\LL^s)= \HH^{2s}(\R^{n+1})$ for $0<s<1$.
In addition, for an open set $O$ in $\R^{n+1}$, $n\geq 1$, we use the following notation
\begin{align*}
\HH^a(O)&:= \left\{u|_O:\ u\in \HH^a(\R^{n+1})\right\},\\
\widetilde{\HH}^a(O)&:= \hbox{closure of $C^\infty_c(O)$ in $\HH^a(\R^{n+1})$}.  
\end{align*} 
Note that $C^\infty_c(\R^{n+1})$ is dense in $\HH^a(\R^{n+1})$ under the norm $\|\cdot \|_{\HH^a(\R^{n+1})}$. 
We also observe that
$$
(\HH^a(O))^* = \widetilde{\HH}^{-a}(O),\ (\widetilde{\HH}^a(O))^* = \HH^{-a}(O),\text{ for } a\in\R.
$$
Let $E\subset \R^{n+1}$ be a closed set, then we define 
$$
\HH^s_E = \HH^s_E (\R^{n+1})=\left\{u\in \HH^s(\R^{n+1}): \ \mathrm{supp}(u)\subset E \right\}.
$$ 
These properties follow in the same way as the ones for the ``classical'' fractional Sobolev spaces for which we refer to \cite{mclean2000strongly}.

Additionally, we recall the (standard) fractional Sobolev spaces (where the space and time variables are separated). 
Let $a\in \R$ be a constant and $H^{a}(\mathbb{R}^{n})=W^{a,2}(\mathbb{R}^{n})$
be the $L^2$-based fractional Sobolev space (see \cite{di2012hitchhiks} for example) with the norm 
\[
\|u\|_{H^{a}(\R^{n})}:=\left\Vert \mathcal{F}_x^{-1}\big\{\left\langle \xi\right\rangle ^{a}\mathcal{F}_xu\big\}\right\Vert _{L^{2}(\R^{n})},
\]
where $\left\langle \xi\right\rangle =(1+|\xi|^{2})^{\frac{1}{2}}$.
Let $\mathcal O \subset \R^n$ be an open set, then 
\begin{align*}
H^{a}(\mathcal{O}) & :=\{u|_{\mathcal{O}}:\,u\in H^{a}(\mathbb{R}^{n})\},\\
\widetilde{H}^{a}(\mathcal{O}) & :=\text{closure of \ensuremath{C_{c}^{\infty}(\mathcal{O})} in \ensuremath{H^{a}(\mathbb{R}^{n})}},
\end{align*}
and the Sobolev space $H^{a}(\mathcal{O})$ is complete under the norm
\[
\|u\|_{H^{a}(\mathcal{O})}:=\inf\left\{ \|v\|_{H^{a}(\mathbb{R}^{n})}:\ v\in H^{a}(\mathbb{R}^{n})\mbox{ and }v|_{\mathcal{O}}=u\right\} .
\]

\subsection{Mapping Properties of the operator $\LL^s$} 
Using the definition of $\LL^s$, we note that the following mapping property holds:

\begin{lemma}\label{Lemma 1}
Let $b\geq 0$, $a\in \R$ be constants. Then the fractional heat operator is a bounded map 
	$$
	    \LL^b : \HH^{2a}(\R^{n+1})\rightarrow \HH^{2a-2b}(\R^{n+1}).
	$$
\end{lemma}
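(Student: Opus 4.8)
The plan is to work entirely on the Fourier side, where the operator $\LL^b$ acts as multiplication by the symbol $m_b(\rho,\xi) = (i\rho+|\xi|^2)^b$. By the Plancherel theorem, establishing the asserted boundedness is equivalent to showing that the multiplier $m_b$ maps the weighted $L^2$-space with weight $w_a(\rho,\xi):=(1+|i\rho+|\xi|^2|)^{a}$ boundedly into the weighted $L^2$-space with weight $w_{a-b}(\rho,\xi)=(1+|i\rho+|\xi|^2|)^{a-b}$. Concretely, for $u\in\HH^{2a}(\R^{n+1})$ we have $\widehat{\LL^b u}(\rho,\xi) = (i\rho+|\xi|^2)^b\widehat u(\rho,\xi)$, and so
\[
\|\LL^b u\|_{\HH^{2a-2b}(\R^{n+1})}^2 = \int_{\R^{n+1}} \big(1+|i\rho+|\xi|^2|\big)^{2a-2b}\,\big|(i\rho+|\xi|^2)^b\big|^2\,|\widehat u(\rho,\xi)|^2\,d\rho\,d\xi.
\]
Since $|(i\rho+|\xi|^2)^b| = |i\rho+|\xi|^2|^{b}$, the whole matter reduces to the pointwise symbol bound
\[
\big(1+|i\rho+|\xi|^2|\big)^{2a-2b}\,|i\rho+|\xi|^2|^{2b}\ \leq\ C\,\big(1+|i\rho+|\xi|^2|\big)^{2a},
\]
i.e. to showing $\lambda^{2b}(1+\lambda)^{-2b}\leq C$ for all $\lambda:=|i\rho+|\xi|^2|\geq 0$. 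This last inequality is immediate since $\lambda\leq 1+\lambda$ and $b\geq 0$, so one may take $C=1$; hence $\|\LL^b u\|_{\HH^{2a-2b}(\R^{n+1})}\leq \|u\|_{\HH^{2a}(\R^{n+1})}$.

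The only point requiring a word of care is the first identity, namely that $\widehat{\LL^b u} = (i\rho+|\xi|^2)^b\widehat u$ holds for all $u\in\HH^{2a}(\R^{n+1})$ and not merely for Schwartz functions; here one invokes the density of $C^\infty_c(\R^{n+1})$ in $\HH^{2a}(\R^{n+1})$ (stated in Section 2.2) together with the just-established continuity estimate to extend $\LL^b$ from $\mathcal S(\R^{n+1})$ to all of $\HH^{2a}(\R^{n+1})$ by continuity, and one checks that the Fourier multiplier description is preserved under this limiting procedure. A minor technical detail is that $(i\rho+|\xi|^2)^b$ denotes the principal branch of the power (well-defined since $i\rho+|\xi|^2$ lies in the closed right half-plane, away from the negative real axis except at the origin, where the factor vanishes and causes no trouble for $b\geq 0$); its modulus is $|i\rho+|\xi|^2|^b = (\rho^2+|\xi|^4)^{b/2}$, which is all that enters the norm computation. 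I do not expect any genuine obstacle here: the statement is essentially a bookkeeping lemma, and the real content is simply the elementary symbol estimate $\lambda^{b}\leq (1+\lambda)^{b}$ for $b\geq 0$, which is why no hypothesis on the sign of $a$ is needed.
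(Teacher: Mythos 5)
Your proof is correct and follows essentially the same route as the paper's: both reduce the statement to the pointwise symbol estimate $|i\rho+|\xi|^2|^{b}\leq (1+|i\rho+|\xi|^2|)^{b}$ for $b\geq 0$, applied on the Fourier side. Your additional remarks on the principal branch of the power and on extending the multiplier identity from Schwartz functions by density are fine elaborations of details the paper leaves implicit.
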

\begin{proof}
If $u\in \HH^{2a}(\R^{n+1})$, then 
\begin{align*}
\left\|\LL^b u\right\|_{\HH^{2a-2b}(\R^{n+1})}
&=\left\|(1+|i\rho+|\xi|^2|)^{a-b} |\widehat{\LL^b u}(\rho,\xi)|\right\|_{L^2(\R^{n+1})} \\
&=\left\|(1+|i\rho+|\xi|^2|)^{a-b} |i\rho+|\xi|^2|^b|\widehat{u}(\rho,\xi)|\right\|_{L^2(\R^{n+1})} \\
&\leq C\left\| (1+|i\rho+|\xi|^2|) ^{a }  |\widehat{u}(\rho,\xi)|\right\|_{L^2(\R^{n+1})} \\
&=C\|u\|_{\HH^{2a}(\R^{n+1})}.
\end{align*} 
\end{proof}
In the remainder of this paper, we will only consider the case $a=s/2$ and $b=s$, that is, $\LL^s: \HH^{s }(\R^{n+1})\rightarrow \HH^{-s }(\R^{n+1})$ for $0<s<1$.

Before addressing the well-posedness for our space-time Dirichlet problem, we discuss the adjoint of the operator $\LL^s$:

\begin{lemma}
Let $0<s<1$ and define $\LL^s_{\ast}$ by
\begin{align*}
\widehat{\LL^s_{\ast} w}(\rho,\xi)
= (- i \rho + |\xi|^2)^s \widehat{w}(\rho,\xi), \quad \mbox{ for } w\in \Dom(\LL^s).
\end{align*} 
Then, for $u,w\in \HH^s(\R^{n+1})$, one has
\begin{align}\label{identity}
\left\langle \LL^s  u, w  \right\rangle_{\HH^{-s}\times \HH^s} = \left\langle u, \LL^s _*w\right\rangle_{\HH^{s}\times \HH^{-s}}\text{ and }
\end{align}
\begin{align}\label{2s}
\left\langle \LL^{s/2}(\LL^{s/2}u),  w\right\rangle_{\HH^{-s}\times \HH^s} = \left\langle \LL^{s }u, w \right\rangle_{\HH^{-s}\times \HH^s}.
\end{align}
Furthermore, it also holds that
\begin{align}\label{integration by parts formula}
\langle \LL^{s} u ,w \rangle _{\HH^{-s}\times \HH^s}=\left(
\LL^{s/2}u , \LL^{s/2}_* w\right)_{\R^{n+1}}.
\end{align}
\end{lemma}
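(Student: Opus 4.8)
The plan is to prove the three identities by reducing everything to Plancherel's theorem on the Fourier side, where $\LL^s$ and $\LL^s_*$ act as multiplication operators by $(i\rho+|\xi|^2)^s$ and $(-i\rho+|\xi|^2)^s$ respectively, and where these two symbols are complex conjugates of each other. First I would establish \eqref{identity}: for $u,w\in\HH^s(\R^{n+1})$ we have $\LL^s u\in\HH^{-s}$ by Lemma \ref{Lemma 1}, so the duality pairing $\langle\LL^s u,w\rangle_{\HH^{-s}\times\HH^s}$ is well-defined and, since $C_c^\infty$ is dense, it agrees with the $L^2$ pairing $\int_{\R^{n+1}} \widehat{\LL^s u}\,\overline{\widehat{w}}\,d\rho\,d\xi$ computed via Plancherel (with the appropriate normalization constant). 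Plugging in $\widehat{\LL^s u}=(i\rho+|\xi|^2)^s\widehat u$ and using that $\overline{(i\rho+|\xi|^2)^s}=(-i\rho+|\xi|^2)^s$ — which is where the non-self-adjointness manifests — one rewrites the integrand as $\widehat u\,\overline{(-i\rho+|\xi|^2)^s\widehat w}=\widehat u\,\overline{\widehat{\LL^s_* w}}$, giving exactly $\langle u,\LL^s_* w\rangle_{\HH^s\times\HH^{-s}}$. One should note here that $(i\rho+|\xi|^2)^s$ is defined via the principal branch of the power on the closed right half-plane $\{\operatorname{Re}\geq 0\}$, on which $z\mapsto z^s$ is continuous and satisfies $\overline{z^s}=\bar z^{\,s}$; this justifies the conjugation identity and should be stated explicitly.

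Next, for \eqref{2s}, the point is simply the semigroup-type identity for symbols: $(i\rho+|\xi|^2)^{s/2}\cdot(i\rho+|\xi|^2)^{s/2}=(i\rho+|\xi|^2)^s$ on the closed right half-plane (again using the principal branch, on which $z^{s/2}z^{s/2}=z^s$ for $\operatorname{Re} z\geq 0$). By Lemma \ref{Lemma 1}, $\LL^{s/2}u\in\HH^0=L^2$ is not quite enough; rather $\LL^{s/2}:\HH^s\to\HH^0$ and then $\LL^{s/2}:\HH^0\to\HH^{-s}$, so $\LL^{s/2}(\LL^{s/2}u)\in\HH^{-s}$ and the pairing with $w\in\HH^s$ makes sense. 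On the Fourier side both sides equal $\int (i\rho+|\xi|^2)^s\widehat u\,\overline{\widehat w}$, so \eqref{2s} follows immediately from the multiplicativity of the symbol; this step is essentially bookkeeping. Finally, \eqref{integration by parts formula} is obtained by combining the previous two: write $\langle\LL^s u,w\rangle=\langle\LL^{s/2}(\LL^{s/2}u),w\rangle$ by \eqref{2s}, then apply \eqref{identity} with $u$ replaced by $\LL^{s/2}u\in\HH^0$ and with $s$ replaced by $s/2$ to move one $\LL^{s/2}$ onto $w$ as $\LL^{s/2}_*$, yielding $\langle\LL^{s/2}u,\LL^{s/2}_* w\rangle_{\HH^0\times\HH^0}$, which is just the $L^2$ inner product $(\LL^{s/2}u,\LL^{s/2}_* w)_{\R^{n+1}}$ since $\HH^0=L^2$ and $\LL^{s/2}_* w\in\HH^0$.

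The main (and only genuine) obstacle is the careful handling of the complex fractional power $(i\rho+|\xi|^2)^s$: one must fix the branch cut, verify that the symbol is continuous and bounded relative to the weight $(1+|i\rho+|\xi|^2|)^s$ appearing in the $\HH^s$-norm, confirm the conjugation rule $\overline{(i\rho+|\xi|^2)^s}=(-i\rho+|\xi|^2)^s$, and confirm the composition rule $(i\rho+|\xi|^2)^{s/2}(i\rho+|\xi|^2)^{s/2}=(i\rho+|\xi|^2)^s$ — both of which hold precisely because the argument stays in the closed right half-plane where the principal branch behaves multiplicatively. Once these symbol-level facts are in place, all three identities are routine consequences of Plancherel's theorem together with the density of $C_c^\infty(\R^{n+1})$ in $\HH^a(\R^{n+1})$, which lets one pass from the $L^2$ pairing of Schwartz functions to the duality pairings between $\HH^{-s}$ and $\HH^s$; I would verify each identity first for $u,w\in C_c^\infty$ (or $\mathcal S$) and then extend by continuity using the boundedness from Lemma \ref{Lemma 1}.
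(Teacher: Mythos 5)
Your proposal is correct and follows essentially the same route as the paper: verify all three identities for Schwartz functions via Plancherel's theorem and the symbol-level facts $\overline{(i\rho+|\xi|^2)^s}=(-i\rho+|\xi|^2)^s$ and $(i\rho+|\xi|^2)^{s/2}(i\rho+|\xi|^2)^{s/2}=(i\rho+|\xi|^2)^s$, then extend by density using the mapping bounds of Lemma \ref{Lemma 1}. The only cosmetic difference is that the paper proves \eqref{integration by parts formula} by a direct Plancherel computation, whereas you chain \eqref{2s} with a version of \eqref{identity} at order $s/2$ — which is fine, provided you note that this instance of \eqref{identity} is used with the asymmetric pairing of $\LL^{s/2}u\in\HH^0$ against $w\in\HH^s$, justified by the same Fourier-side calculation.
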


\begin{remark}
We remark that especially the identity \eqref{integration by parts formula} will play an essential role in obtaining the well-posedness for the fractional parabolic equation \eqref{noncal_heat} in the following section.
\end{remark}

\begin{proof}
By utilizing the Plancherel theorem, for $u,w \in \mathcal S(\R^{n+1})$, we have 
\begin{align*}
\langle \LL^s u, w\rangle _{\HH^{-s}\times \HH^s}=\int_{\R^{n+1}}(\LL^s u) \overline{w}dxdt=\int_{\R^{n+1}}(\widehat{\LL^s u}) \overline{\widehat w}d\xi d\rho=\int_{\R^{n+1}} (i\rho +|\xi|^2)^s\widehat{u} \overline{\widehat{w}}\ d\xi d\rho.
\end{align*}
Similarly, we also note that 
\begin{align*}
\langle  u, \LL^s_* w\rangle _{\HH^{s}\times \HH^{-s}}=\int_{\R^{n+1}} u (\overline{\LL^s_*w})dxdt=\int_{\R^{n+1}} \widehat u (\overline{\widehat{\LL^s_*w}})d\xi d\rho=\int_{\R^{n+1}} \widehat{u}\overline{ (-i\rho +|\xi|^2)^s\widehat{w}}\ d\xi d\rho,
\end{align*}
which proves \eqref{identity} for $u,w \in \mathcal{S}(\R^{n+1})$. Fourier transforming the respective identities, it is easy to see that \eqref{2s} holds whenever $u,w\in \mathcal S(\R^{n+1})$. In order to show \eqref{integration by parts formula} for $u,w \in \mathcal{S}(\R^{n+1})$, we only need to compute  
\begin{align*}
\left(
\LL^{s/2}u , \LL^{s/2}_* w\right)_{\R^{n+1}}=&\int_{\R^{n+1}}\LL^{s/2}u(\overline{\LL^{s/2}_* w})dxdt=\int_{\R^{n+1}}\widehat{\LL^{s/2}u}(\overline{\widehat{\LL^{s/2}_* w}})d\xi d\rho\\
=&\int_{\R^{n+1}}(i\rho +|\xi|^2)^{s/2}\widehat u [\overline{(-i\rho +|\xi|^2)^{s/2}\widehat w}]d\xi d\rho,
\end{align*}
by using the Plancherel theorem again, which proves \eqref{integration by parts formula} for the case $u,w\in \mathcal S(\R^{n+1})$. Therefore, \eqref{identity}, \eqref{2s} and \eqref{integration by parts formula} hold by using density arguments.
\end{proof}

In the sequel, we seek to study a ``time localized'' problem. To this end, we study the interaction of cut-off functions in the $t$-variable with our function spaces. 
Recall that a characteristic function on a Lipschitz domain is a multiplier of the Sobolev spaces $H^{\gamma}$ for $|\gamma |<\frac{1}{2}$. In the following observation, we also note that a characteristic function in the $t$ variable is a multiplier of $\HH^s(\R^{n+1})$, $s\in(0,1)$.

\begin{proposition}
\label{prop:cut-off}
	Let $\chi_{[-T,T]}(t)$ be a characteristic function for  $t\in \mathbb{R}$. Suppose that $u=u(t,x)\in \mathcal{H}^s(\mathbb{R}^{n+1})$, that is, 
	$$
	\int_{\mathbb{R}^n}\int_{\mathbb{R}} (1+|i\rho+|\xi|^2|)^s |\mathcal{F}u(\rho,\xi)|^2 d\rho d\xi<\infty,
	$$
	where $\mathcal{F}$ is the Fourier transform with respect to the $(t,x)\in \mathbb{R}^{n+1}$ variables.
	Then, the following observations hold: 
	\begin{itemize}
		\item[(1)] For each fixed space variable, we have
		$$
		\chi_{[-T,T]}(\cdot)u(\cdot,x)\in H^{s/2}(\mathbb{R})\ \ \hbox{for }a.e.\ x\in\mathbb{R}^n.
		$$
		
		\item[(2)] As a joint function of space and time, we have 
		$$\chi_{[-T,T]}u \in \mathcal{H}^s(\mathbb{R}^{n+1}).$$
	\end{itemize}
\end{proposition}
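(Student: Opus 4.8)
The plan is to identify $\HH^s(\R^{n+1})$, up to equivalent norms, with the mixed space $H^{s/2}(\R_t;L^2(\R^n_x))\cap L^2(\R_t;H^s(\R^n_x))$, and then to reduce both assertions to the classical one-dimensional fact recalled just above: $\chi_{[-T,T]}$ is a pointwise multiplier on $H^\gamma(\R)$ whenever $|\gamma|<1/2$, which applies here with $\gamma=s/2\in(0,1/2)$. The first step is the norm equivalence. Using $|i\rho+|\xi|^2|=(|\rho|^2+|\xi|^4)^{1/2}$ together with the elementary bounds $2^{-1/2}(|\rho|+|\xi|^2)\le(|\rho|^2+|\xi|^4)^{1/2}\le|\rho|+|\xi|^2$ and $\tfrac12(a^s+b^s)\le(a+b)^s\le a^s+b^s$ for $a,b\ge0$, $0<s<1$, one gets
\[
(1+|i\rho+|\xi|^2|)^s\ \asymp\ \langle\rho\rangle^s+\langle\xi\rangle^{2s},\qquad(\rho,\xi)\in\R^{n+1},
\]
with constants depending only on $s$ and $n$. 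By Plancherel in the $t$- and in the $x$-variables this is equivalent to
\[
\|u\|_{\HH^s(\R^{n+1})}^2\ \asymp\ \int_{\R^n}\|u(\cdot,x)\|_{H^{s/2}(\R)}^2\,dx\ +\ \int_{\R}\|u(t,\cdot)\|_{H^s(\R^n)}^2\,dt .
\]

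For (1): finiteness of the first term on the right forces, by Tonelli's theorem, that $x\mapsto\|u(\cdot,x)\|_{H^{s/2}(\R)}$ is measurable and finite for a.e.\ $x\in\R^n$, hence $u(\cdot,x)\in H^{s/2}(\R)$ for a.e.\ $x$. Invoking the $H^{s/2}$-multiplier property of $\chi_{[-T,T]}$ in the $t$-variable (valid since $s/2<1/2$), we obtain for a.e.\ $x$ that $\chi_{[-T,T]}(\cdot)u(\cdot,x)\in H^{s/2}(\R)$, with an estimate
\[
\|\chi_{[-T,T]}(\cdot)u(\cdot,x)\|_{H^{s/2}(\R)}\le C\,\|u(\cdot,x)\|_{H^{s/2}(\R)},
\]
where $C=C(s,T)$ does not depend on $x$. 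This is (1), and the estimate will be reused in (2).

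For (2): I estimate the two terms of the equivalent norm of $\chi_{[-T,T]}u$ separately. The $L^2_tH^s_x$ term is immediate, since multiplication by $\chi_{[-T,T]}(t)$ does not act on the $x$-variable and is a contraction on $L^2(\R^n_x)$ for each fixed $t$, so that $\int_\R\|\chi_{[-T,T]}(t)u(t,\cdot)\|_{H^s(\R^n)}^2\,dt=\int_{-T}^T\|u(t,\cdot)\|_{H^s(\R^n)}^2\,dt\le\|u\|_{L^2_tH^s_x}^2<\infty$. For the $H^{s/2}_tL^2_x$ term, I integrate the square of the bound from (1) over $x$ to get $\int_{\R^n}\|\chi_{[-T,T]}(\cdot)u(\cdot,x)\|_{H^{s/2}(\R)}^2\,dx\le C^2\int_{\R^n}\|u(\cdot,x)\|_{H^{s/2}(\R)}^2\,dx<\infty$. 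By the norm equivalence, finiteness of these two integrals is exactly $\chi_{[-T,T]}u\in\HH^s(\R^{n+1})$.

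The only points requiring care are the uniformity in $x$ of the constant in the one-dimensional multiplier estimate — it depends on $s$ and $T$, but not on the function being multiplied, hence not on $x$ — and the measurability of the $x$-sections, which Tonelli's theorem supplies. Beyond these bookkeeping issues there is no serious analysis once the norm equivalence is in place; the genuine ingredient is the classical theorem, already recalled before the statement, that characteristic functions of intervals are multipliers on $H^\gamma(\R)$ for $|\gamma|<1/2$.
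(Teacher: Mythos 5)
Your proposal is correct, and structurally it follows the same reduction as the paper: split the weight via $(1+|i\rho+|\xi|^2|)^s \asymp (1+|\rho|)^s+(1+|\xi|^2)^s$, note that the spatial part is untouched by the time cut-off (it is a contraction there), and reduce everything to the one-dimensional statement that $\chi_{[-T,T]}$ multiplies $H^{s/2}(\R)$ boundedly, with a constant independent of the slice. The genuine difference is where the one-dimensional multiplier bound comes from: you cite the classical theorem that characteristic functions of Lipschitz sets are multipliers on $H^{\gamma}$ for $|\gamma|<\tfrac12$ (which the paper indeed recalls just before the proposition), whereas the paper's proof does not use it as a black box but re-derives exactly this estimate by hand --- writing $\mathcal{F}_t(\chi_{[-1,1]}u)$ as a convolution with $\sin(\eta)/\eta$, splitting into $|\eta|\leq 2\pi$ and dyadic-type annuli $2\pi k\leq|\eta|\leq 2\pi(k+1)$, and closing with Cauchy--Schwarz and the Peetre-type inequality $\langle\rho\rangle^s\langle\rho-\eta\rangle^{-s}\leq C\langle\eta\rangle^s$ together with the integrability of $\langle\eta\rangle^{s-2}$. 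So your argument trades the bulk of the paper's computation for a citation, which is legitimate and shorter but less self-contained; in exchange, your explicit identification $\HH^s(\R^{n+1})\simeq H^{s/2}(\R_t;L^2(\R^n_x))\cap L^2(\R_t;H^s(\R^n_x))$ packages the bookkeeping (Plancherel in $t$ and in $x$, Tonelli for the a.e.\ statement in (1), uniformity of the constant in $x$) more transparently than the paper, which only uses this equivalence implicitly in its part (2). Both arguments are sound; yours is cleaner if the classical multiplier theorem is taken as known, the paper's is preferable if one wants the $s/2<1/2$ mechanism visible on the page.
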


\begin{proof}
	(1) Recall that $\mathcal{F}_x$ is the Fourier transform in the $x$ variable and $\mathcal{F}_t$ is the Fourier transform in the $t$ variable, then in order to prove (1) for $u \in \mathcal{S}(\R^{n+1})$, we estimate
	\begin{align*}
	& \int_{\mathbb{R}^n} \int_{\mathbb{R}} (1+|\rho|)^s  | \mathcal{F}_tu(\rho,x)|^2  d\rho dx  
	=  \int_{\mathbb{R}} (1+|\rho|)^s \int_{\mathbb{R}^n} | \mathcal{F}_tu(\rho,x)|^2 dx d\rho \\
	=& \int_{\mathbb{R}} (1+|\rho|)^s\int_{\mathbb{R}^n}  |\mathcal{F}_x\left(\mathcal{F}_tu(\rho,\xi)\right)|^2 d\xi d\rho 
	=\int_{\mathbb{R}^n}\int_{\mathbb{R}} (1+|\rho|)^s |\mathcal{F}u(\rho,\xi )|^2 d\rho d\xi \\
	\leq &\int_{\mathbb{R}^n}\int_{\mathbb{R}} (1+|i\rho+|\xi|^2|)^s |\mathcal{F}u(\rho,\xi )|^2 d\rho d\xi <\infty,
	\end{align*}
	where we have utilized Plancherel's formula and Fubini's theorem. By density arguments this implies that for $u \in \mathcal{H}^s(\R^{n+1})$
	\begin{align*}
	\int_{\mathbb{R}} (1+|\rho|)^s  | \mathcal{F}_tu(\rho,x)|^2  d\rho <\infty \hbox{ for a.e.}\ x\in\mathbb{R}^n.
	\end{align*}
	Thus, we can deduce that 
	$$
	u(\cdot,x)\in H^{s/2}(\mathbb{R}) \mbox{ for a.e. $x \in \R^n$}.
	$$
	
	Next, we seek to show that for $0<s<1$, ($0<\frac{s}{2}<\frac{1}{2}$), the multiplication of a $\HH^s$ function $u( t, \cdot )$ with $\chi_{[-T,T]}$ is bounded, i.e.
	$$
	\chi_{[-T,T]}(\cdot)u(\cdot,x)\in H^{s/2}(\mathbb{R})\ \ \hbox{for }a.e.\ x\in\mathbb{R}^n.
	$$
	It suffices to consider the case $T=1$. Let us define the function 
	$$w(t,x):=\chi_{[-1,1]}(t)u(t,x) \ \ \ \hbox{in }\mathbb{R}^{n+1},$$ 
	then for each $x \in \R^n$ fixed and $u\in \mathcal{S}(\R^{n+1})$, we will show that
	\begin{align*}
	\int_{\mathbb{R}} (1+|\rho| )^s  | \mathcal{F}_t w(\rho,x)|^2  d\rho 
	\leq C \int_{\mathbb{R}} (1+|\rho| )^s  | \mathcal{F}_t u(\rho,x)|^2  d\rho .
	\end{align*}
	Recall that the Fourier transform of products turns into a convolution in the Fourier space, i.e.,
	\begin{align*}
	\int_{\mathbb{R}} (1+|\rho|)^s  | \left(\mathcal{F}_t w(\rho, x)\right)|^2  d\rho =\, &\int_{\mathbb{R}} (1+|\rho|)^s  \left| \left(\mathcal{F}_t \chi_{[-1,1]}* \mathcal{F}_tu\right)(\rho,x) \right|^2  d\rho\\
	\leq &\,C\int_{\mathbb{R}} (1+|\rho|)^s  \left|\int_{\mathbb{R}} \mathcal{F}_t{\chi}_{[-1,1]}(\eta)\mathcal{F}_t{u}(	\rho-\eta,x)d\eta \right|^2  d\rho,
	\end{align*}
	where for notational convenience here and in the sequel we have suppressed the space variable $\xi$. Furthermore,
	
	\begin{align*}
	&\quad \int_{\mathbb{R}} (1+|\rho|)^s \left|\int_{\mathbb{R}} \mathcal{F}_t{\chi}_{[-1,1]}(\eta) \mathcal{F}_t{u}(\rho-\eta,x)d\eta \right|^2d\rho\\
	&\leq c\int_{\mathbb{R}} (1+|\rho|)^s \sum_{k=1}^{\infty}\left|\int_{2\pi k\leq|\eta|\leq 2\pi(k+1)} {\sin(\eta)\over \eta }\mathcal{F}_t{u}(\rho-\eta,x)d\eta\right|^2d\rho\\
	&\quad +c\int_{\mathbb{R}} (1+|\rho|)^s \left|\int_{|\eta|\leq 2\pi} {\sin(\eta)\over \eta }\mathcal{F}_t{u}(\rho-\eta,x)d\eta\right|^2d\rho \\
	&=:I_1+I_2.
	\end{align*}
	We first estimate $I_1$. By the Cauchy-Schwartz inequality, one has 
	\begin{align*}
	I_1&=\int_{\mathbb{R}} (1+|\rho|)^s \sum_{k=1}^{\infty}\left|\int_{2\pi k\leq|\eta|\leq 2\pi(k+1)} {\sin(\eta)\over \eta }\mathcal{F}_t{u}(	\rho-\eta,x)d\eta\right|^2d\rho\\
	&\leq \int_{\mathbb{R}} \langle\rho\rangle^s \sum_{k=1}^{\infty}\left(\int_{2\pi k\leq|\eta|\leq 2\pi(k+1)} {1\over |\eta| }|\mathcal{F}_t{u}(\rho-\eta,x)|d\eta\right)^2d\rho \\
	&\leq C\int_{\mathbb{R}} \langle\rho\rangle^s \sum_{k=1}^{\infty} \int_{2\pi k\leq|\eta|\leq 2\pi(k+1)} {1\over |\eta|^2} |\mathcal{F}_t{u}(\rho-\eta,x)|^2 d\eta d\rho \\
	&\leq C \int_{\mathbb{R}}  \langle\rho\rangle^s \sum_{k=1}^{\infty} \int_{2\pi k\leq|\eta|\leq 2\pi(k+1)} {1\over \langle\eta\rangle^{2}\langle\rho-\eta\rangle^s}\langle\rho-\eta\rangle^s |\mathcal{F}_t{u}(\rho-\eta,x)|^2 d\eta d\rho \\
	&\leq C \int_{\mathbb{R}} {1\over \langle \eta\rangle^{2-s}} \left(\int_{\mathbb{R}}\langle\rho-\eta\rangle^s |\mathcal{F}_t{u}(\rho-\eta,x)|^2 d\rho \right) d\eta\\
	&= M\|u(\cdot,x)\|_{H^{s/2}(\mathbb{R})},
	\end{align*} 
	for some constant $M>0$. 
	Here we used that $|\eta|\geq 2\pi$ so that $|\eta|\geq C\langle\eta\rangle:=(1+|\eta|^2)^{1/2}$, and $$\langle\rho\rangle^s\langle\rho-\eta\rangle^{-s}\leq C\langle \eta\rangle^s \quad \text{ and }\quad \int_{\mathbb{R}} {1\over {\langle \eta \rangle}^{2-s}}d\eta \leq C<\infty, $$
	for some constant $C>0$, due to the fact that $2-s>1$ for $0<s<1$.  
	
	Secondly, we estimate $I_2$. By the Cauchy-Schwartz inequality again and the fact that $\frac{\sin^2(\eta)}{|\eta|^2} \leq 1$,
	we obtain that
	\begin{align*}
	I_2&\leq C\int_{\mathbb{R}} \langle\rho\rangle^s  \left|\int_{|\eta|\leq 2\pi} {\sin(\eta)\over \eta }\mathcal{F}_t{u}(	\rho-\eta,x)d\eta\right|^2d\rho \\
	&\leq C\int_{\mathbb{R}} \langle\rho\rangle^s \int_{|\eta|\leq 2\pi} {\sin^2(\eta)\over |\eta|^2\langle\rho-\eta\rangle^s }\langle\rho-\eta\rangle^s|\mathcal{F}_t{u}(\rho-\eta,x)|^2d\eta  d\rho\\ 
	&\leq C \int_{\mathbb{R}}\langle\rho\rangle^s \int_{|\eta|\leq 2\pi}{1\over\langle\rho-\eta\rangle^s}  \langle\rho-\eta\rangle^s|\mathcal{F}_t{u}(	\rho-\eta,x)|^2d\eta  d\rho\\
	&\leq C\int_{|\eta|\leq 2\pi} \langle \eta\rangle^s \left(\int_{\mathbb{R}}	\langle\rho-\eta\rangle^s|\mathcal{F}_t{u}(	\rho-\eta,x)|^2   d\rho\right) d\eta \\
	&\leq C\left(\int_{|\eta|\leq 2\pi} \langle \eta\rangle^s d\eta\right) \|u(\cdot,x)\|_{H^{s/2}(\mathbb{R} )}^2\\
	&	\leq C\|u( \cdot,x)\|_{H^{s/2}(\mathbb{R} )}^2,
	\end{align*}	
	where constants $C>0$ are independent of $u$, and they might changes from line to line.

	For (2), it suffices to show that
	\begin{align*}
	\int_{\mathbb{R}} (1+|\rho|+|\xi|^2)^s  | \mathcal{F}w(\rho,\xi)|^2  d\rho 
	\leq C \int_{\mathbb{R}} (1+|\rho|+|\xi|^2)^s  | \mathcal{F} u(\rho,\xi)|^2  d\rho .
	\end{align*}
Due to the observation 
	$$
	   C_{1,s} (1+|\rho|+|\xi|^2)^s\leq (1+|\rho|)^s+ (1+|\xi|^2)^s \leq C_{2,s}(1+|\rho|+|\xi|^2)^s ,
	$$
for some constants $0<C_{1,s}<1<C_{2,s}<\infty$,
	we only need to prove
	\begin{align*}
	\int_{\mathbb{R}} (1+|\rho|)^s  | \mathcal{F} w(\rho,\xi)|^2  d\rho 
	\leq C \int_{\mathbb{R}} (1+|\rho|)^s  | \mathcal{F} u(\rho,\xi)|^2  d\rho .
	\end{align*}
    In fact the above inequality holds by following a similar argument as part (1) by replacing $\mathcal{F}_t$ by $\mathcal{F}$ in those computations. 
		Finally, combined with a density argument, this completes the proof.
\end{proof}

\begin{remark}
\label{rmk:time_cut_off}
Note that by virtue of the integral representation (see Remark \ref{rmk:semi_group}) of the fractional parabolic operator $\mathcal{L}^s$, the value of $\mathcal{L}^s u(t,x)$ for $t\in (-\infty, T)$, is uniquely determined by the values of the function $u(t,x)$ for all $t \leq T$ and $x\in \mathbb{R}^n$. In other words, 
$$
(\mathcal{L}^s u)(t,x) = (\mathcal{L}^s (\chi_{(-\infty,T]}u)(t,x)) \text{ for }(t,x)\in \Omega_T,
$$ 
as by a straightforward computation for $u \in \mathcal{S}(\R^{n+1})$ and $(t,x)\in \Omega_T$,
\begin{align*} 
&\quad \mathcal{L}^su(t,x) \\
&= \int^{\infty}_0\int_{\mathbb{R}^n} \left[u(t,x)-u(t-\tau,x-z)\right]K_s(\tau ,z) dzd\tau\\
&= \int^{\infty}_0\int_{\mathbb{R}^n} \left[(\chi_{(-\infty,T]}u)(t,x)-(\chi_{(-\infty,T]}u))(t-\tau,x-z)\right]K_s(\tau ,z) dzd\tau\\
&=\mathcal{L}^s (\chi_{(-\infty,T]}u)(t,x).
\end{align*} 
Hence, in particular, in spite of the space-time nonlocal definition of the operator $\mathcal{L}^s$ in \eqref{eq:op}, the function $\LL^s u$ only depends on the past but \emph{not} on the future (as is physically desirable).

For the adjoint of the space-time nonlocal operator $\LL^s_*$ a similar computation yields that 
$$
(\mathcal{L}^s_* u)(t,x) = (\mathcal{L}^s_\ast (\chi_{[-T,\infty)}u)(t,x)) \text{ for }(t,x)\in \Omega_T.
$$ 
This means the function $\LL^s_* u$ only depends on the future but not on the past (symmetric property with respect to the time-variable of the adjoint operator $\LL^s$).

\end{remark}

Similar arguments hold for general characteristic functions $\chi_{(a,b)}(t)$ for $-\infty \leq a<b<\infty$ by scaling and translation with respect to time variables.

\section{Analysis of the initial exterior value problem for $\LL^s+Q$}\label{Section 3}
In this section, we study the well-posedness of the forward problem \eqref{noncal_heat}, and define the corresponding DN map \eqref{DN map} rigorously. We consider the Dirichlet problem for the following fractional parabolic equation:
\begin{align}\label{noncal_Diri}
\begin{cases}
\left( (\p_t-\Delta)^s +Q(t,x)\right)u(t,x)=F(t,x) & \hbox{in }\Omega_T,\\
u(t,x)=f(t,x) & \hbox{in }(\Omega_{e})_T, \\
u(t,x)=0 & \text{for }t\leq -T \text{ and }x\in \R^n,
\end{cases}
\end{align}
where we recall that $\Omega_T:= (-T,T)\times\Omega$ and $(\Omega_e)_T:= (-T,T)\times \Omega_e$ with $\Omega_e = \R^n \setminus \overline{\Omega}$.

Before studying the well-posedness of \eqref{noncal_Diri}, we emphasize the following observations:
\begin{itemize}
\item[(1)] We reiterate that the future data $u(t,x)|_{\{t\geq T\}}$ do \emph{not} affect the solutions of \eqref{noncal_Diri}. In particular, the following ``exterior" value problem, where we also prescribe data for $t\geq T$ (``in the future'') 
\begin{align}
\label{eq:main_eq}
\begin{cases}
\left( (\p_t-\Delta)^s +Q(t,x)\right)u(t,x)=F(t,x) & \hbox{in }\Omega_T,\\
u(t,x)=f(t,x) & \hbox{in }(\Omega_{e})_T \cup \{t\geq T\}, \\
u(t,x)=0 & \text{for }t\leq -T \text{ and }x\in \R^n,
\end{cases}
\end{align}
is \emph{not} an overdetermined problem. An alternative way of observing this will be provided by using the extensive characterization of the fractional operator (see Section \ref{Section 4}).

\item[(2)] We have shown that $u \chi _{[-T,T]}\in \HH^s(\R^{n+1})$ provided that $u\in \HH^s(\R^{n+1})$ in the previous section. Moreover, combined with the initial condition \eqref{noncal_Diri}, we know that $u \chi _{(-\infty,T]}\in \HH^s(\R^{n+1})$ provided that $u\in \HH^s(\R^{n+1})$
\end{itemize}

With these remarks in hand, we now proceed to the discussion of the well-posedness of \eqref{noncal_Diri}.

\subsection{Well-posedness}\label{well-posedness_statement}
We begin by setting up the weak formulation for the operator $\LL^s+Q$.
For $u,w\in \HH^s(\R^{n+1})$, we define 
the bilinear form $B_Q(\cdot,\cdot)$ by 
\begin{align}\label{bilinear}
    B_Q(u,w) &:=\left( \LL^{s/2}u, \LL^{s/2}_* w \right)_{\R^{n+1}}+\left( Qu, w\right)_{\Omega_T}.
\end{align} 
Moreover, by \eqref{integration by parts formula} and \eqref{bilinear} this can also be rephrased as 
\begin{align*} 
    B_Q(u,w) = \left \langle \LL^{s } u, w \right\rangle_{\HH^{-s}\times \HH^s}+\left( Qu, w \right)_{\Omega_T}.
\end{align*}
 
To simplify notation, given any $T\in (0,\infty)$, let us set 
\begin{align*}
u_T(t,x):=u(t,x)\chi_{[-T,T]}(t), \text{ for }t\in \R \text{ and }x\in \R^n.
\end{align*}
With this convention, we define the notion of a solution to \eqref{noncal_Diri}.

\begin{definition}[Weak solutions]
\label{defi:weak_sol}
	Let $\Omega$ be a bounded open set in $\R^n$,  $T\in (0,\infty)$ and $\Omega_T=(-T,T)\times \Omega\subset\R^{n+1}$. Given $F\in (\HH^s_{\overline{\Omega_T}})^*$ and $f\in \HH^s((\Omega_e)_T)$, for $u\in \HH^s(\R^{n+1})$, we say that $u\in \HH^s(\R^{n+1})$
	is a weak solution of \eqref{noncal_Diri} if $v:=u-f \in \mathcal{H}^s_{\overline{\Omega_T}}$, and 
		$$
		B_Q(u,w)=\langle F, w  \rangle_{(\HH^s_{\overline{\Omega_T}})^*\times \HH^s_{\overline{\Omega_T}}}, \quad \text{ for any }w\in \HH^s_{\overline{\Omega_T}},
		$$
	or equivalently, 
	$$
	B_Q(v,w) = \left\langle F-\left( ( \p_t -\Delta)^s +Q\right) f,w \right\rangle_{(\HH^s_{\overline{\Omega_T}})^*\times \HH^s_{\overline{\Omega_T}}},
	$$
    for any $w\in \HH^s_{\overline{\Omega_T}}$.
\end{definition}

We show the well-posedness for the fractional parabolic problem \eqref{noncal_Diri} in $\HH^s(\R^{n+1})$. By the possibility of choosing the future date arbitrarily  (c.f. (1) in the comment above), in this context uniqueness only holds in the sense that $u_T(t,x):= u(t,x) \chi_{[-T,T]}(t)$ and $u(t,x)\chi_{(-\infty,T]}(t,x)$ are uniquely determined by \eqref{eq:main_eq}.

\begin{theorem}[Well-posedness]\label{Thm well-posed}
	Let $\Omega$ be bounded and open set in $\R^n$. Suppose that $Q=Q(t,x)\in L^\infty(\Omega_T)$. Let 
	$
	    B_Q(u,w)  
	$
	be the bilinear form defined by \eqref{bilinear} for $u,w\in \HH^s(\R^{n+1})$.
	\begin{enumerate}
		\item There is a countable set $\Sigma=\{\lambda_j\}^\infty_{j=1}$ of real numbers $\lambda_1\leq \lambda_2\leq \cdots\rightarrow +\infty$, such that given $\lambda\in \R\setminus\Sigma$, for any $F\in (\HH^s_{\overline{\Omega_T}})^*$ and $f\in \HH^s((\Omega_e)_T)$, there exists a unique solution $u_T\in \HH^s(\R^{n+1})$ with $(u-f)_T\in \HH^s_{\overline{{\Omega_T}}}$ and 
		$$
		    B_Q(u_T,w)-\lambda(u_T,w)_{\Omega_T} = \left\langle F,w \right\rangle_{(\HH^s_{\overline{\Omega_T}})^*\times \HH^s_{\overline{\Omega_T}}},
		$$
		for any $w\in \HH^s_{\overline{{\Omega_T}}}$. 
		Moreover, $u$ satisfies the following stability estimate 
		\begin{align}\label{stability estimate of well-posedness}
			 \|u_T\|_{\HH^s(\R^{n+1})}\leq C_0 \LC \|F\|_{\HH^{-s}(\Omega_T)} + \|f\|_{\HH^s((\Omega_e)_T)}\RC,
		\end{align}
		for some constant $C_0>0$ independent of $u$, $F$ and $f$.
		\item The condition \eqref{eigenvalue condition} holds if and only if $0 \notin\Sigma$.
		
		\item If $Q\geq 0$ a.e. in $\Omega_T$, then $\Sigma\subset \R_+$ and the condition \eqref{eigenvalue condition} always holds.
	\end{enumerate}
\end{theorem}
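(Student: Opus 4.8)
The plan is to recast \eqref{noncal_Diri} as an operator equation on the Hilbert space $\HH^s_{\overline{\Omega_T}}$ and run the standard Fredholm alternative; the only genuinely ``nonlocal parabolic'' input will be a G\r{a}rding inequality for the form $B_Q$ coming from the sectoriality of the symbol $(i\rho+|\xi|^2)^s$. First I would homogenize the exterior and initial data: given $f\in\HH^s((\Omega_e)_T)$, pick a near-norm-optimal extension (still denoted $f$) in $\HH^s(\R^{n+1})$ and set $v:=u_T-f_T$. By Proposition~\ref{prop:cut-off} and Remark~\ref{rmk:time_cut_off}, the functions $u_T$ and $\chi_{(-\infty,T]}u$ again lie in $\HH^s(\R^{n+1})$ and $\LL^s u|_{\Omega_T}$ is unaffected by the cut-off $\chi_{(-\infty,T]}$, so that Definition~\ref{defi:weak_sol} (in the form using \eqref{integration by parts formula}) is equivalent to: find $v\in\HH^s_{\overline{\Omega_T}}$ with
$$
B_Q(v,w)-\lambda\,(v,w)_{\Omega_T}=\langle \widetilde F,w\rangle_{(\HH^s_{\overline{\Omega_T}})^*\times\HH^s_{\overline{\Omega_T}}}\qquad\text{for all }w\in\HH^s_{\overline{\Omega_T}},
$$
where $\widetilde F\in(\HH^s_{\overline{\Omega_T}})^*=\HH^{-s}(\Omega_T)$ satisfies $\|\widetilde F\|\le C(\|F\|_{\HH^{-s}(\Omega_T)}+\|f\|_{\HH^s((\Omega_e)_T)})$. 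It then suffices to analyze the bounded operator $P_\lambda\colon\HH^s_{\overline{\Omega_T}}\to(\HH^s_{\overline{\Omega_T}})^*$ given by $P_\lambda v:=B_Q(v,\cdot)-\lambda(v,\cdot)_{\Omega_T}$.

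Next I would record the two basic estimates for $B_Q$ restricted to $\HH^s_{\overline{\Omega_T}}$. Continuity, $|B_Q(u,w)|\le C\|u\|_{\HH^s}\|w\|_{\HH^s}$, follows from Lemma~\ref{Lemma 1} with $a=b=s/2$ (so that $\LL^{s/2},\LL^{s/2}_*\colon\HH^s\to L^2$ are bounded) together with $Q\in L^\infty$. For coercivity, Plancherel gives $\mathrm{Re}\,(\LL^{s/2}v,\LL^{s/2}_*v)_{\R^{n+1}}=\int_{\R^{n+1}}\mathrm{Re}\big((i\rho+|\xi|^2)^s\big)|\widehat v|^2$, and since $i\rho+|\xi|^2$ lies in the closed right half-plane its argument has modulus $\le\pi/2$, whence $\mathrm{Re}\big((i\rho+|\xi|^2)^s\big)\ge\cos(\tfrac{\pi s}{2})\,|i\rho+|\xi|^2|^s\ge0$; combined with the elementary inequality $(1+r)^s\le1+r^s$ for $r\ge0$, $0<s<1$, this yields $\mathrm{Re}\,(\LL^{s/2}v,\LL^{s/2}_*v)_{\R^{n+1}}\ge\cos(\tfrac{\pi s}{2})\big(\|v\|_{\HH^s(\R^{n+1})}^2-\|v\|_{L^2(\R^{n+1})}^2\big)$. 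Since $\mathrm{supp}(v)\subset\overline{\Omega_T}$ forces $\|v\|_{L^2(\R^{n+1})}=\|v\|_{L^2(\Omega_T)}$, choosing $\mu>\cos(\tfrac{\pi s}{2})+\|Q\|_{L^\infty(\Omega_T)}$ gives $\mathrm{Re}\big(B_Q(v,v)+\mu(v,v)_{\Omega_T}\big)\ge\cos(\tfrac{\pi s}{2})\|v\|_{\HH^s(\R^{n+1})}^2$ for all $v\in\HH^s_{\overline{\Omega_T}}$, so by the Lax--Milgram theorem $P_{-\mu}$ is an isomorphism $\HH^s_{\overline{\Omega_T}}\to(\HH^s_{\overline{\Omega_T}})^*$.

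For part (1) I would write $P_\lambda=P_{-\mu}-(\lambda+\mu)\,\iota$, where $\iota v:=(v,\cdot)_{\Omega_T}$ factors through the embedding $\HH^s_{\overline{\Omega_T}}\hookrightarrow L^2(\Omega_T)$, which is compact because $\Omega_T$ is bounded; hence $K:=P_{-\mu}^{-1}\iota$ is a compact operator on $\HH^s_{\overline{\Omega_T}}$ and $P_\lambda$ is invertible iff $\mathrm{Id}-(\lambda+\mu)K$ is. The nonzero spectrum of $K$ consists of eigenvalues forming an at most countable set with no accumulation point other than $0$, so $P_\lambda$ fails to be invertible exactly for $\lambda$ in an at most countable set $\Sigma$ along which $|\lambda|\to\infty$. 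If $\lambda\in\Sigma$ is real, there is $v\ne0$ in $\HH^s_{\overline{\Omega_T}}$ with $B_Q(v,w)=\lambda(v,w)_{\Omega_T}$ for all $w$; taking $w=v$ and using the coercivity computation gives $\lambda=\mathrm{Re}\,B_Q(v,v)/\|v\|_{L^2(\Omega_T)}^2\ge-\|Q\|_{L^\infty(\Omega_T)}$, so the real exceptional values form a discrete, bounded-below set, enumerated $\lambda_1\le\lambda_2\le\cdots$, accumulating only at $+\infty$. For $\lambda\notin\Sigma$, boundedness of $P_\lambda^{-1}$ produces the unique $v$ with $\|v\|_{\HH^s}\le C\|\widetilde F\|$; undoing the homogenization recovers the solution $u_T$ (and $u\chi_{(-\infty,T]}$) and the stability estimate \eqref{stability estimate of well-posedness}.

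For part (2), note that $0\notin\Sigma$ is equivalent to injectivity of $P_0$, i.e.\ to the statement that the only $v\in\HH^s_{\overline{\Omega_T}}$ with $B_Q(v,w)=0$ for all $w\in\HH^s_{\overline{\Omega_T}}$ is $v\equiv 0$. Testing against $w\in C_c^\infty(\Omega_T)$ identifies this kernel with the set of $v\in\HH^s_{\overline{\Omega_T}}$ solving $(\LL^s+Q)v=0$ in $\Omega_T$; conversely, if $u\in\HH^s(\R^{n+1})$ solves $(\LL^s+Q)u=0$ in $\Omega_T$ with $u|_{(\Omega_e)_T}=0$ and $u=0$ for $t\le-T$, then by Remark~\ref{rmk:time_cut_off} the function $v:=\chi_{(-\infty,T]}u\in\HH^s_{\overline{\Omega_T}}$ still solves $(\LL^s+Q)v=0$ in $\Omega_T$, and $v\equiv0$ is exactly ``$u\equiv0$ on $(-\infty,T)\times\R^n$''; hence \eqref{eigenvalue condition} holds iff $\ker P_0=\{0\}$ iff $0\notin\Sigma$. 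For part (3), if $Q\ge0$ a.e.\ in $\Omega_T$, then for every $v\in\HH^s_{\overline{\Omega_T}}\setminus\{0\}$,
$$
\mathrm{Re}\,B_Q(v,v)=\mathrm{Re}\,(\LL^{s/2}v,\LL^{s/2}_*v)_{\R^{n+1}}+(Qv,v)_{\Omega_T}\ge\cos(\tfrac{\pi s}{2})\int_{\R^{n+1}}|i\rho+|\xi|^2|^s|\widehat v|^2>0,
$$
since vanishing of the last integral would force $\widehat v=0$ a.e.; consequently any real $\lambda\in\Sigma$ obeys $\lambda=\mathrm{Re}\,B_Q(v,v)/\|v\|_{L^2(\Omega_T)}^2>0$, i.e.\ $\Sigma\subset\R_+$, and in particular $0\notin\Sigma$, so \eqref{eigenvalue condition} holds by part (2). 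The only step that is not essentially formal is the G\r{a}rding inequality, which rests on the sectoriality bound $\mathrm{Re}((i\rho+|\xi|^2)^s)\ge\cos(\tfrac{\pi s}{2})|i\rho+|\xi|^2|^s$ together with a careful bookkeeping of the time cut-offs so that the weak formulation on $\R^{n+1}$ is faithfully reproduced on $\HH^s_{\overline{\Omega_T}}$; everything else is the classical Fredholm alternative.
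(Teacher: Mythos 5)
Your proposal is correct, and it follows the paper's overall skeleton (homogenize the exterior/initial data, Lax--Milgram for a shifted form, compact solution operator, Fredholm alternative), but the key estimate is obtained by a genuinely different route. The paper proves full coercivity of $B_Q(v,v)+\mu(v,v)_{\Omega_T}$ with $\mu=\|{-\min\{Q,0\}}\|_{L^\infty(\Omega_T)}$: after the sectoriality computation $\cos(s\theta)\geq\cos(\tfrac{\pi s}{2})$ it invokes a fractional Poincar\'e-type inequality (via Hardy--Littlewood--Sobolev in the $x$-variable, using the compact support of $v$) to show $\int(\rho^2+|\xi|^4)^{s/2}|\widehat v|^2\geq C\|v\|^2_{L^2}$, which upgrades the homogeneous bound to the full $\HH^s$-norm with no extra shift. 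You instead use only the pointwise sectoriality bound $\mathrm{Re}\big((i\rho+|\xi|^2)^s\big)\geq\cos(\tfrac{\pi s}{2})|i\rho+|\xi|^2|^s$ together with the subadditivity $(1+r)^s\leq 1+r^s$, i.e.\ a G\r{a}rding inequality, and absorb the $L^2$ defect into a larger shift $\mu>\cos(\tfrac{\pi s}{2})+\|Q\|_{L^\infty}$. Correspondingly, for part (3) the paper simply reruns Lax--Milgram with $\mu=0$ (its Poincar\'e bound makes $B_Q$ itself coercive when $Q\geq0$, giving directly $\Sigma\subset(0,\infty)$ and the sharper localization $\Sigma\subset(-\|{\min\{Q,0\}}\|_{L^\infty},\infty)$ in general), whereas you test putative real eigenfunctions against themselves and use strict positivity of $\mathrm{Re}\,B_Q(v,v)$ to push $\Sigma$ into $\R_+$; your eigenfunction-testing also supplies the lower bound $\lambda\geq-\|Q\|_{L^\infty}$ that the paper gets from its coercivity constant. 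Both arguments are sound; the paper's buys a quantitative coercivity (and solvability at $\lambda=0$ for $Q\geq 0$ without passing through the spectrum), while yours is shorter and avoids the HLS/Poincar\'e step entirely. On the points both treatments leave at the same level of detail --- compactness of $\HH^s_{\overline{\Omega_T}}\hookrightarrow L^2(\Omega_T)$, the identification $(\HH^s_{\overline{\Omega_T}})^*\simeq\HH^{-s}(\Omega_T)$ used in \eqref{stability estimate of well-posedness}, and the countability/enumeration of $\Sigma$ for this non-self-adjoint problem --- your write-up is no less careful than the paper's (indeed your ``at most countable'' phrasing and the explicit cut-off argument linking $\ker P_0$ to \eqref{eigenvalue condition} in part (2) are slightly more scrupulous than the paper's one-line appeal to the Fredholm alternative).
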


\begin{proof}
Let $v:=(u-f)_T$ where $f\in \HH^s((\Omega_e)_T)$ denotes the exterior values of $u$. Then $v\in \HH^s_{\overline{{\Omega_T}}}$ and $v_T=v$. Considering the equation for $v$, it is sufficient to show that there exists a unique solution $v\in \HH^s_{\overline{{\Omega_T}}}$ such that 
	$$
	B_Q(v,w) +\mu ( v,  w)_{\Omega_T}=(\widetilde{F},w_T)
	$$
	for a suitable $\widetilde F\in (\HH^s_{\overline{{\Omega_T}}})^*$ and for any {$w\in \HH^s_{\overline{{\Omega_T}}}$}. 

We first prove the boundedness of the bilinear form $B_Q(v,w)$ in  $\HH^s(\R^{n+1})$ by showing that  
\begin{align}\label{boundedness of bilinear form}
|B_Q(v,w)|\leq C\|v\|_{\HH^s(\R^{n+1})}\|w\|_{\HH^s(\R^{n+1})}.
\end{align}
Indeed, by the definition of $B_Q(v,w)$, the Plancherel theorem and H\"older's inequality, we obtain that 
\begin{align*}
\left|(\widehat{\LL^{s/2}v}, \widehat{\LL_*^{s/2}w})_{\R^{n+1}}\right|
=& \left|\int_{\R^{n+1}}(i\rho+|\xi|^2)^{s/2}\widehat{v}(\rho,\xi)\overline{(-i\rho+|\xi|^2)^{s/2}\widehat{w}(\rho,\xi)}d\rho d\xi \right|  \\
\leq & \left( \int_{\R^{n+1}} (\rho^2+|\xi|^4)^{s/2}   |\widehat{v}(\rho,\xi)|^2d\rho d\xi\right)^{1/2} \\
&\times \left( \int_{\R^{n+1}} (\rho^2+|\xi|^4)^{s/2}   |\widehat{w}(\rho,\xi)|^2d\rho d\xi\right)^{1/2} \\
\leq & \|v\|_{\HH^s(\R^{n+1})}\|w\|_{\HH^s(\R^{n+1})},
\end{align*}
which proves \eqref{boundedness of bilinear form}.
Thus, 
\begin{align*}
\left|B_Q(v,w)+\mu(v,w)_{\Omega_T}\right|\leq C \|v\|_{\HH^s(\R^{n+1})}\|w\|_{\HH^s(\R^{n+1})},\text{ for }v,w\in \HH^s_{\overline{{\Omega_T}}}.
\end{align*}

It remains to prove the coercivity of $B_Q$ in the space $\HH^s_{\overline{{\Omega_T}}}$. From \eqref{integration by parts formula} and the Plancherel formula, one has

\begin{align*}
B_Q(v,v)+\mu(v,v)&= (\LL^{s/2} v, \LL^{s/2}_*v)_{\R^{n+1}} + (Qv|_\Omega, v|_\Omega)_{\Omega_T}+\mu(v,v)\\
& \geq  (\LL^{s/2} v, \LL^{s/2}_*v)_{\R^{n+1}}  ,
\end{align*}
where $\mu= \|-\min \{Q,0\}\|_{L^\infty(\Omega_T)}$. Further,
\begin{align*}
( \LL^{s/2}v ,  \LL_*^{s/2}v )_{\R^{n+1}}
&= \int_{\R^{n+1}}(i\rho+|\xi|^2)^{s/2}\widehat{v}(\rho,\xi)\overline{(-i\rho+|\xi|^2)^{s/2}\widehat{v}(\rho,\xi)}d\rho d\xi  \\
&=  \int_{\R^{n+1}} (\rho^2+|\xi|^4)^{s/2}(\cos(s\theta)+i\sin(s\theta))   |\widehat{v}(\rho,\xi)|^2d\rho d\xi \\
&= \int_{\R^{n+1}} (\rho ^2+|\xi|^4)^{s/2} \cos(s\theta) |\widehat{v}(\rho,\xi)|^2d\rho d\xi , \ \ 0<s<1,
\end{align*}
where $\tan\theta =\rho / {|\xi |^2}$ and where we utilized that $\sin(s\rho / {|\xi |^2})$ is an odd function in the last identity. By the definition of $\rho / {|\xi |^2}\in \R$ and $\theta =\tan^{-1}(\rho /{|\xi |^2})$, one has $-\pi/2<\theta<\pi/2$. It follows that $\cos\theta\geq 0$ . Thus, for any fixed $s\in(0,1)$, we know that $\cos(s\theta )\geq c_s:= \cos (\frac{s\pi}{2})>0$. This implies that  
\begin{align*}
B_Q(v,v)+\mu (v,v)_{\Omega_T} \geq c_s \int_{\R^{n+1}}(\rho^2+|\xi|^4)^{s/2}|\widehat{v}(\rho,\xi)|^2 d\rho d\xi.
\end{align*}

Next, we seek to prove that
\begin{align}\label{estimate 2}
\int_{\R^{n+1}}(\rho^2+|\xi|^4)^{s/2}|\widehat{v}(\rho,\xi)|^2 d\rho d\xi\geq C\|v\|_{L^2(\R^{n+1})}^{2},
\end{align} 
for some constant $C>0$. Note that the Fourier transform can be rewritten as 
$$\widehat{v}(\rho,\xi) =\F_x(\F_t v)(\rho,\xi).$$
It is easy to see that 
\begin{align*}
\int_\R\int_{\R^n} |i\rho+|\xi|^2|^{s} |\mathcal{F}_t\F_x v(\rho,\xi)|^2d\xi d\rho
\geq \int_\R  \LC \int_{\R^n}   |\xi|^{2s} |\mathcal{F}_x\F_t v(\rho,\xi)|^2d\xi \RC d\rho.
\end{align*}
By using the Plancherel's theorem and the Hardy-Littlewood-Sobolev inequality for the $x$-variable, we have
\begin{align*}
\int_\R  \LC \int_{\R^n}   |\xi|^{2s} |\widehat{v}(\rho,\xi)|^2d\xi \RC d\rho
&= \int_\R  \LC \int_{\R^n}   |\xi|^{2s} |\mathcal{F}_x (\mathcal{F}_t v)(\rho,\xi)|^2d\xi \RC d\rho\\
&=\int_\R  \LC \int_{\R^n}   |\F_x[(-\Delta_x)^{s/2}  (\mathcal F_t v)(\rho,\cdot) ]|^2d\xi \RC d\rho\\
&=\int_\R   \|(-\Delta_x)^{s/2} (\mathcal F_t v)(\rho,x)\|^2_{L^2(\R^{n})} d\rho \\
&\geq C\int_\R \|(\mathcal F_t v)(\rho,x)\|^2_{L^2(\R^{n})} d\rho\\
&= C\|v(t,x)\|_{L^2(\R^{n+1})}^2,
\end{align*}
where we have used that $v(t,x)$ is compactly supported in the $x$-variable.
Therefore,
\begin{align*}
\int_\R\int_{\R^n} |i\rho+|\xi|^2|^{s} |\mathcal{F}_t\F_x v(\rho,\xi)|^2d\xi d\rho \geq C \|v(t,x)\|_{L^2(\R^{n+1})}^2,
\end{align*}
which demonstrates \eqref{estimate 2}.
	
Hence, the bilinear form $B_Q(v,w)+\mu(v,w)_{\Omega_T}$ is bounded and coercive. Thus, by the Lax-Milgram theorem there is a unique solution $v=G_{\mu}\widetilde F\in \HH^s_{\overline{\Omega_T}}$ such that 
    $$
    B_Q(v,w)+\mu(v,w)_{\Omega_T}= (\widetilde F,w)
    $$
    for all $w\in \HH^s_{\overline{{\Omega_T}}}$. Moreover, 
	$$
	\|v\|_{\HH^s_{\overline{{\Omega_T}}}}\leq C_1  \|\widetilde F\|_{(\HH^s_{\overline{{\Omega_T}}})^*}
	$$
    for some constant $C_1$ independent of $\widetilde F$. 
    
    In particular, $G_{\mu}: (\HH^s_{\overline{{\Omega_T}}})^*\rightarrow \HH^s_{\overline{{\Omega_T}}}$ is bounded and by the compact Sobolev embedding, the operator  $G_{\mu}:L^2(\Omega_T)\rightarrow L^2(\Omega_T)$ is compact. Then the spectral theorem implies that the eigenvalues of $G_{\mu}$ are ${1\over \lambda_j+\mu}$ with $\lambda_j\rightarrow +\infty$ and $\Sigma\subset(-\|-(Q\wedge 0)\|_{L^\infty(\Omega_T)}, \infty)$. 
    This completes the proof of $(1)$.
 
 The claim of (2) is a direct consequence of (1) and the Fredholm alternative.

In order to deduce $(3)$, in the above proof of (1) we may choose $\mu=0$. Therefore, $\Sigma\subset(0, \infty)$, which implies that $0\notin\Sigma$. Thus \eqref{eigenvalue condition} holds by using condition $(2)$.
\end{proof}

\begin{remark} \label{Remark of well-posedness}
A number of remarks are in order:
	\begin{itemize}
		\item[(1)] Due to the form of the fractional parabolic operator $\LL^s$ (the operator couples the space and time variables), our well-posedness proof (which relies on the Lax-Milgram lemma) rather resembles a well-posedness proof for an \emph{elliptic} than for a \emph{parabolic} operator (for which one would typically use a Galerkin type approximation). 
		
		\item[(2)] We reiterate that in the stability estimate \eqref{stability estimate of well-posedness}, one can only hope to control the solution $u(t,x)$ for $\{t\leq T\}$, since the future data are independent of the fractional parabolic equation \eqref{noncal_Diri} and can hence still be chosen arbitrarily.
		
		\item[(3)] Considering the proof of Proposition \ref{well-posedness_statement}, we note that in principle it is not necessary to consider zero initial data (i.e. to prescribe $u=0$ for $t\leq -T$). It would also have been possible to prescribe initial data $u=g$ for $t\leq -T$ with $g\in \mathcal{H}^s((-\infty,-T)\times\R^n)$. In this case we would reduce the initial data to a compactly supported function, and thus would consider the function $\tilde{v}:= u-f- g\chi_{(-\infty,-T)}$ to prove a corresponding well-posedness result.

	\end{itemize}
\end{remark}

\begin{remark}
\label{rmk:adjoint}	
 Similarly, one can prove the following well-posedness for the adjoint parabolic equation. Let $Q=Q(t,x)\in L^\infty(\Omega_T)$ and $g=g(t,x) \in \HH^s((\Omega_e)_T)$. We seek to derive the existence and uniqueness of the solution $u\in \HH^s(\R^{n+1})$ of 
	\begin{align}\label{adjoint fractional parabolic equation}
	\begin{cases}
	\left( \LL^s_\ast + Q(t,x)\right)u(t,x)=0 & \text{ in }\Omega_T, \\
	u(t,x)=g(t,x) & \text{ in } (\Omega_e)_T, \\
	u(t,x)=0 & \text{ for }t\geq T \text{ and }x\in \R^n.
	\end{cases}
	\end{align}
	By considering the function $v:=(u-g)\chi_{[-T,\infty)}$, repeating the arguments of Theorem \ref{Thm well-posed} and relying on the same bilinear form $B_Q(\cdot,\cdot)$ of \eqref{adjoint fractional parabolic equation}, it is possible to derive the same properties as in Theorem \ref{Thm well-posed}. In particular (by the Fredholm alternative), the eigenvalue condition \eqref{eigenvalue condition} is equivalent to the following eigenvalue condition of \eqref{adjoint fractional parabolic equation}:
	\begin{align*}
	\begin{cases}
	\hbox{If }u\in\HH^s(\R^{n+1})\ \hbox{solves }(\LL^s_\ast+Q)u=0\ \hbox{in }\Omega_T \\
	\hbox{with $u|_{(\Omega_{e})_T}=0$} \text{ and }u=0 \text{ for }(t,x)\in [T, \infty)\times \R^n, \\
	\hbox{then }u\equiv 0 \text{ in }(-T,\infty) \times \R^n.
	\end{cases}
	\end{align*} 
\end{remark}

Heading towards the discussion of the inverse problem under consideration, we define the abstract  trace space for our exterior Dirichlet data by
\begin{align}\label{trace space}
\mathbb X:=\HH^s([-T,T]\times \R^n)/\HH^s_{\overline{{\Omega_T}}}.
\end{align}
Every function $f\in\HH^s([-T,T]\times \R^n)$ is a member of the set of class representative $[f]\in \mathbb X$. To simplify the notation, we use $f$ to denote $[f]$.

\subsection{The Dirichlet-to-Neumann map}
Relying on the well-posedness property of $\LL^s+Q$ whenever the eigenvalue condition \eqref{eigenvalue condition} holds, we define the corresponding DN map $\Lambda_Q$ for $\LL^s+Q$ by means of the bilinear form $B_Q$ defined by \eqref{bilinear}. Analogously, one can also define the adjoint DN map $\Lambda_Q^*$ by utilizing the following natural pairing property 
\begin{align} 
\left\langle \Lambda_{Q}f,g\right\rangle _{\mathbb X^*\times \mathbb X} =\left\langle f,\Lambda_{Q}^*g\right\rangle _{\mathbb X\times \mathbb X^*} ,\quad \text{ for }\quad f, g\in \mathbb X.\label{symmetric DN map}
\end{align}
Recall that the existence of the DN maps $\Lambda_Q$ and $\Lambda_Q^\ast$ is guaranteed by \eqref{eigenvalue condition} and Section \ref{well-posedness_statement}.

\begin{proposition}[DN map]
	\label{prop:DNmap} For $0<s<1$, let $\Omega$ be a bounded open set in $\mathbb{R}^{n}$ for $n\geq 1$ and $\Omega_T:=(-T,T)\times \Omega$. Let $Q\in L^{\infty}(\Omega_T)$ satisfy the eigenvalue condition \eqref{eigenvalue condition}.
	Let $\mathbb X$ be the abstract trace space given in \eqref{trace space}. Define
	\begin{equation}
	\left\langle \Lambda_{Q}f,g\right\rangle _{\mathbb X^*\times\mathbb X} :=B_{Q}(u_f,g),\quad f,g\in \mathbb X,\label{eq:equvalent integration by parts}
	\end{equation}
	where $u_f\in \HH^{s}(\mathbb{R}^{n+1})$ is
	the solution of \eqref{noncal_Diri} with the
	Dirichlet data $f$ in $(\Omega_e)_T$. Then 
	\[
	\Lambda_{Q}:\mathbb X\to \mathbb X^{*},
	\]
	is a bounded operator. Moreover, the adjoint DN map $\Lambda_Q^*$ can also be represented as 
	\begin{align}\label{adjoint and bilinear form}
    \langle f, \Lambda_Q^*g\rangle_{\mathbb X\times \mathbb X^*}=B_Q(f,u_g), 
	\end{align}
	where $u_g\in \HH^s(\R^{n+1})$ is the solution of the adjoint equation $(\LL^s_*+Q)u_g =0 $ in $\Omega_T$ with $u_g=g$ in $(\Omega_e)_T$.
\end{proposition}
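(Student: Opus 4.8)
The plan is to establish three things: that the bilinear expression in \eqref{eq:equvalent integration by parts} descends to a well-defined pairing on $\mathbb X\times\mathbb X$; that the resulting operator $\Lambda_Q:\mathbb X\to\mathbb X^*$ is bounded; and that, combining the defining relation \eqref{symmetric DN map} with the weak formulations of the direct and adjoint Dirichlet problems, one obtains the representation \eqref{adjoint and bilinear form} of $\Lambda_Q^*$. The one delicate point, pervading all three steps, is that $u_f$ is determined by \eqref{noncal_Diri} only modulo its future values $\{t\ge T\}$: only $u_f\chi_{(-\infty,T]}$ and $(u_f)_T$ are unique. I would handle this by always taking the representatives of $[f],[g]\in\mathbb X$ (and hence $u_f$) supported in $[-T,T]\times\R^n$, using Proposition \ref{prop:cut-off}, and by noting that for such a $g$ the operator $\LL^{s/2}_*g$ is supported in $\{t\le T\}$ by the ``depends only on the future'' property of $\LL^s_*$ (Remark \ref{rmk:time_cut_off}); thus $(\LL^{s/2}u_f,\LL^{s/2}_*g)_{\R^{n+1}}=\langle u_f,\LL^s_*g\rangle$ only feels $u_f$ on $\{t\le T\}$, while $(Qu_f,g)_{\Omega_T}$ only feels it on $\Omega_T$. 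I expect this bookkeeping with time cut-offs and with the spaces $\HH^s_E$, $\mathbb X$ from Section \ref{Section 2} to be the main technical overhead, the rest being formal.

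\emph{Well-definedness.} Fixing a representative $f$ and recalling that $u_f$ is a weak solution of \eqref{noncal_Diri} with $F=0$, Definition \ref{defi:weak_sol} gives $B_Q(u_f,w)=0$ for all $w\in\HH^s_{\overline{\Omega_T}}$, so $B_Q(u_f,g)$ depends only on the class $[g]$. For independence of $f$: if $f'-f\in\HH^s_{\overline{\Omega_T}}$ then $u_{f'}-u_f\in\HH^s_{\overline{\Omega_T}}$ solves $(\LL^s+Q)(\cdot)=0$ in $\Omega_T$ with vanishing exterior and initial data, hence vanishes by the eigenvalue condition \eqref{eigenvalue condition} (equivalently by the uniqueness part of Theorem \ref{Thm well-posed} with $\lambda=0\notin\Sigma$). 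Hence \eqref{eq:equvalent integration by parts} is meaningful on $\mathbb X\times\mathbb X$.

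\emph{Boundedness.} With the representatives chosen as above, the boundedness estimate \eqref{boundedness of bilinear form} for $B_Q$ gives $|B_Q(u_f,g)|\le C\|u_f\|_{\HH^s(\R^{n+1})}\|g\|_{\HH^s(\R^{n+1})}$, while \eqref{stability estimate of well-posedness} with $F=0$ gives $\|u_f\|_{\HH^s(\R^{n+1})}\le C_0\|f\|_{\HH^s((\Omega_e)_T)}\le C_0\|f\|_{\HH^s([-T,T]\times\R^n)}$. Passing to the infimum over admissible representatives of $[f]$ and $[g]$ yields $|\langle\Lambda_Q f,g\rangle_{\mathbb X^*\times\mathbb X}|\le C\|f\|_{\mathbb X}\|g\|_{\mathbb X}$, i.e. $\Lambda_Q:\mathbb X\to\mathbb X^*$ is bounded.

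\emph{Adjoint representation.} By \eqref{symmetric DN map}, $\langle f,\Lambda_Q^*g\rangle_{\mathbb X\times\mathbb X^*}=\langle\Lambda_Q f,g\rangle_{\mathbb X^*\times\mathbb X}=B_Q(u_f,g)$. Writing $u_g=g+w$ with $w\in\HH^s_{\overline{\Omega_T}}$, where $u_g$ solves the adjoint Dirichlet problem (cf. Remark \ref{rmk:adjoint}), and using $B_Q(u_f,w)=0$, we get $B_Q(u_f,g)=B_Q(u_f,u_g)$. On the other hand, the identities \eqref{identity}--\eqref{integration by parts formula} show $B_Q(\psi,u_g)=\langle\psi,(\LL^s_*+Q)u_g\rangle$ for every $\psi\in\HH^s_{\overline{\Omega_T}}$, so the weak adjoint equation forces $B_Q(v,u_g)=0$ for $v:=u_f-f\in\HH^s_{\overline{\Omega_T}}$, whence $B_Q(u_f,u_g)=B_Q(f,u_g)$. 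Combining the two chains of equalities gives $\langle f,\Lambda_Q^*g\rangle_{\mathbb X\times\mathbb X^*}=B_Q(f,u_g)$, which is precisely \eqref{adjoint and bilinear form}.
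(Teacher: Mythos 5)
Your proposal is correct and follows essentially the same route as the paper's (much terser) proof: well-definedness from the weak formulation of \eqref{noncal_Diri} together with uniqueness under \eqref{eigenvalue condition}, boundedness from the $\HH^s(\R^{n+1})$-bound on $B_Q$ combined with the stability estimate \eqref{stability estimate of well-posedness} and the quotient norm on $\mathbb X$, and the adjoint formula by unwinding \eqref{symmetric DN map} with the weak formulations of the direct and adjoint problems. Your extra bookkeeping with representatives supported in $[-T,T]\times\R^n$ and the ``$\LL^s_*$ only sees the future'' observation is a legitimate expansion of what the paper leaves implicit.
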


\begin{proof}
For any $\phi,\ \psi\in \HH^s_{\overline{\Omega_T}}$, by Definition \ref{defi:weak_sol} and the definition of $\mathbb{X}$, 
$$
    B_Q(u_{f+\phi}, g+\psi)=B_Q(u_f, g).
$$
Thus, $\Lambda_{Q}$ is well-defined. Invoking this together with the $\mathcal{H}^s(\R^{n+1})$ bounds for the bilinear form $B_Q(\cdot, \cdot)$ yields 
\begin{align*}
\left|\left\langle \Lambda_{Q}f,g\right\rangle_{\mathbb X^*\times \mathbb X} \right|=\left|B_Q(u_f,g)\right| & \leq C\|f\|_{\mathbb X}\|g\|_{\mathbb X},
\end{align*}
for some constant $C>0$, whence $\Lambda_Q$ is bounded. 
Finally, due to the definition  \eqref{symmetric DN map} of $\Lambda_Q^*$ and the bilinear form $B_Q(\cdot, \cdot)$,  \eqref{adjoint and bilinear form} holds immediately.
\end{proof}

\begin{remark}
We emphasise that in contrast to the situation where the nonlocal operator is either $(-\Delta)^s+q$ or $(-\nabla \cdot (A\nabla ))^s+q$ for $q=q(x)$ (nonlocal elliptic operators) (see \cite{ghosh2017calderon,ghosh2016calder} for instance) our operator $\LL^s=(\p_t-\Delta)^s$ is not self-adjoint.
\end{remark}

\begin{remark}
\label{rmk:characterization}
In order to find an explicit distributional representation of the DN map $\Lambda_Q$ and thus justifying the expression in \eqref{DN map0}, we note that for any $g\in C_c^{\infty}((\Omega_e)_T)$ we have

\begin{align}
\langle\Lambda_{Q}f,g\rangle_{\mathbb X^{*}\times \mathbb X} & =B_{Q}(u_{f},g)\nonumber \\
& = \langle \LL^{s}u_{f}, g  \rangle_{\mathcal{H}^{-s}(\R^{n+1})\times \mathcal{H}^s(\R^{n+1})}+\int_{\Omega_T}(Qu_{f})\overline{g}dxdt\nonumber \\
& = \langle \LL^{s}u_{f}, g\rangle_{\mathbb X^{*}\times \mathbb X}.\label{t15}
\end{align}
By \eqref{eq:equvalent integration by parts} and \eqref{t15}, we can thus conclude that 
\begin{equation*}
\Lambda_{Q}f|_{(\Omega_e)_T}=\left.\mathcal{L}^{s}u_{f}\right|_{(\Omega_{e})_T},
\end{equation*}
where $u_f\in \HH^s(\R^{n+1})$ is a weak solution of $(\LL^s+Q)u_f=0$ in $\Omega_T$ with $u_f=f$ in $(\Omega_{e})_T$, $u_f=0$ for $\{t\leq {-T} \}$. 
Similarly, one can also derive that the adjoint DN map can expressed explicitly by $\Lambda_Q^* g|_{(\Omega_e)_T}= \LL^s_* u_g|_{(\Omega_e)_T}$, where $u_g\in \HH^s(\R^{n+1})$ is the unique solution of $(\LL^s_*+Q)u_g=0$ in $\Omega_T$ with $u_g=g$ in $(\Omega_{e})_T$ and $u_g=0$ for $\{t\geq T \}$.
\end{remark}

Last but not least, in concluding this section, we prove an associated Alessandrini type identity for the fractional parabolic equation. The Alessandrini type identity plays an essential role in proving the uniqueness and stability results, and we also refer readers to \cite{isakov1991completeness,isakov2006inverse} for this type identities for various PDEs.

\begin{lemma}[Integral identity]\label{Lem Integral identitiy}
Let $\Omega_T\subset \R^{n+1}$ be the bounded open set from above and let $Q_1,Q_2\in L^\infty(\Omega_T)$ satisfy the eigenvalue condition \eqref{eigenvalue condition}. Then, for any exterior Dirichlet data $f_1,f_2\in \mathbb X$ in $(\Omega_e)_T$, we have 
\begin{align*}
\left\langle (\Lambda_{Q_1}-\Lambda_{Q_2})f_1,f_2\right\rangle_{\mathbb X^*\times \mathbb X}=\left((Q_1-Q_2)u_1|_{\Omega_T},u_2|_{\Omega_T}\right)_{\Omega_T},
\end{align*}
where $u_1\in \HH^s(\R^{n+1})$ is a weak solution of $(\LL^s+Q_1)u_1=0$ in $\Omega_T$ with $u_1|_{(\Omega_e)_T}=f_1$ and $u_1=0$ for $\{t\leq -T \}$, and $u_2\in \HH^s(\R^{n+1})$ is a weak solution of $(\LL^s_{\ast}+Q_2)u_2=0$ in $\Omega_T$ with $u_2|_{(\Omega_e)_T}=f_2$ and $u_2 = 0 $ for $\{t\geq T\}$.
\end{lemma}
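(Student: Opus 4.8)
The plan is to compute the quantity $\left\langle \Lambda_{Q_1}f_1,f_2\right\rangle_{\mathbb{X}^*\times\mathbb{X}}$ in two different ways using the bilinear form representation \eqref{eq:equvalent integration by parts} together with the adjoint representation \eqref{adjoint and bilinear form}, and then subtract. First I would write, by definition \eqref{eq:equvalent integration by parts},
\begin{align*}
\left\langle \Lambda_{Q_1}f_1,f_2\right\rangle_{\mathbb{X}^*\times\mathbb{X}} = B_{Q_1}(u_1,f_2),
\end{align*}
where $u_1\in\HH^s(\R^{n+1})$ solves $(\LL^s+Q_1)u_1=0$ in $\Omega_T$ with exterior data $f_1$ and vanishing for $t\le -T$. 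Similarly, using the adjoint representation \eqref{adjoint and bilinear form} for $\Lambda_{Q_2}$ together with the pairing \eqref{symmetric DN map},
\begin{align*}
\left\langle \Lambda_{Q_2}f_1,f_2\right\rangle_{\mathbb{X}^*\times\mathbb{X}} = \left\langle f_1,\Lambda_{Q_2}^*f_2\right\rangle_{\mathbb{X}\times\mathbb{X}^*} = B_{Q_2}(f_1,u_2),
\end{align*}
where $u_2\in\HH^s(\R^{n+1})$ solves the adjoint equation $(\LL^s_*+Q_2)u_2=0$ in $\Omega_T$ with exterior data $f_2$ and vanishing for $t\ge T$.

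Next, I would exploit the freedom in the class representatives. Since $u_1-f_1\in\HH^s_{\overline{\Omega_T}}$ and $u_2-f_2\in\HH^s_{\overline{\Omega_T}}$, the well-definedness of the DN map (the identity $B_{Q_1}(u_{f+\phi},g+\psi)=B_{Q_1}(u_f,g)$ established in the proof of Proposition \ref{prop:DNmap}) lets me replace $f_2$ by $u_2$ in the first bilinear form and $f_1$ by $u_1$ in the second. This yields
\begin{align*}
\left\langle (\Lambda_{Q_1}-\Lambda_{Q_2})f_1,f_2\right\rangle_{\mathbb{X}^*\times\mathbb{X}} = B_{Q_1}(u_1,u_2) - B_{Q_2}(u_1,u_2).
\end{align*}
Now using the definition \eqref{bilinear} of $B_Q$, the leading terms $(\LL^{s/2}u_1,\LL^{s/2}_*u_2)_{\R^{n+1}}$ cancel exactly (they are independent of the potential), leaving only
\begin{align*}
B_{Q_1}(u_1,u_2) - B_{Q_2}(u_1,u_2) = \left((Q_1-Q_2)u_1,u_2\right)_{\Omega_T},
\end{align*}
which is precisely the claimed identity.

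The only genuine subtlety — and the step I would be most careful about — is checking that the symmetrization is legitimate, i.e.\ that $B_{Q_1}(u_1,u_2)$ really equals $\left\langle \Lambda_{Q_1}f_1,f_2\right\rangle_{\mathbb{X}^*\times\mathbb{X}}$ even though $u_2$ is not literally the representative $f_2$: this requires that $u_2$ and $f_2$ differ by an element of $\HH^s_{\overline{\Omega_T}}$, which holds by the well-posedness construction, and that $B_{Q_1}(u_1,\cdot)$ vanishes on $\HH^s_{\overline{\Omega_T}}$, which is exactly the weak formulation satisfied by $u_1$ (namely $B_{Q_1}(u_1,w)=0$ for all $w\in\HH^s_{\overline{\Omega_T}}$ since the source term vanishes). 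Symmetrically, $B_{Q_2}(\cdot,u_2)$ vanishes on $\HH^s_{\overline{\Omega_T}}$ by the weak formulation of the adjoint equation for $u_2$, using the integration-by-parts identity \eqref{identity}. Once these two vanishing statements are in place, the substitution of representatives is justified and the rest is bookkeeping. I do not anticipate any analytic obstacle here; all the necessary mapping and duality properties have already been established in Sections \ref{Section 2} and \ref{Section 3}.
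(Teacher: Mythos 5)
Your proposal is correct and follows essentially the same route as the paper: rewrite the difference via the adjoint pairing \eqref{symmetric DN map}, express both terms as $B_{Q_1}(u_1,u_2)$ and $B_{Q_2}(u_1,u_2)$ using \eqref{eq:equvalent integration by parts} and \eqref{adjoint and bilinear form}, and let the potential-independent parts cancel. The only difference is that you spell out the representative-substitution step (that $B_{Q_1}(u_1,\cdot)$ and $B_{Q_2}(\cdot,u_2)$ vanish on $\HH^s_{\overline{\Omega_T}}$), which the paper's terse proof leaves implicit; this is a correct and welcome clarification, not a deviation.
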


\begin{proof}
By the adjoint property \eqref{symmetric DN map} and \eqref{adjoint and bilinear form}, one has 
\begin{align*}
\left\langle (\Lambda_{Q_1}-\Lambda_{Q_2})f_1,f_2\right\rangle_{\mathbb X^* \times \mathbb X}&=\left\langle \Lambda_{Q_1}f_1,f_2\right\rangle_{\mathbb X^* \times \mathbb X}-\left\langle f_1,\Lambda_{Q_2}^*f_2\right\rangle_{\mathbb X \times \mathbb X^*} \\
&=B_{Q_1}(u_1,u_2)-B_{Q_2}(u_1,u_2)\\
&=\left((Q_1-Q_2)u_1|_{\Omega_T},u_2|_{\Omega_T}\right)_{\Omega_T}. \qedhere
\end{align*}
\end{proof}

\section{The degenerate parabolic extension problem for $\LL^s$}\label{Section 4}

In this section, we recall that also for the fractional parabolic operator $(\p_t -\D)^s$ there is a parabolic \emph{Caffarelli-Silvestre extension}, which allows us to ``localize'' the problem at hand. In proving the weak unique continuation property and hence the desired Runge approximation result, we heavily exploit this.

In order to have an appropriate functional analytic set-up at our disposal, we introduce the following function space which is adapted to the Caffarelli-Silvestre extension:

\begin{definition}
	We define the function space $\W(\R\times \R^{n+1}_+)$ as follows:  
	\begin{align*}
	 &\quad \W(\R\times \R^{n+1}_+) \\
	 &	:=\left\{F \in L^2_{loc}(\R\times \R^{n+1}_+,x_{n+1}^{1-2s}):\ \p_{x_j} F, \ \p_{n+1} F\in L^2(\R\times \R^{n+1}_+,x_{n+1}^{1-2s}),\ j=1,\cdots,n\right\},
	\end{align*} 
	where $x'=(x_1,\cdots, x_n)\in \R^n$ and we use $\p_{n+1}=\partial_{x_{n+1}}$.  
		In particular, $\W(\R\times \R^{n+1}_+)$ is a Hilbert space endowed with the scalar product
	$$
	\langle F,G\rangle_{\W(\R\times \R^{n+1}_+)} = \int_{\R}\int_{\R^{n+1}_+} \LC FG+(\p_{n+1}F)(\p_{n+1}G)+\sum^{n}_{j=1}(\p_{x_i}F)(\p_{x_i}G)\RC x_{n+1}^{1-2s}dx'dx_{n+1}dt.
	$$
	If $F=F(t,X)\in \W(\R\times \R^{n+1}_+)$ with $X=(x',x_{n+1})$, we define its norm to be
	$$
	\|F\|_{\W(\R\times \R^{n+1}_+)}  = \langle F,F\rangle^{1/2}_{\W(\R\times \R^{n+1}_+)}=\LC\int_{\R} \|F(t,\cdot,\cdot)\|^2_{H^1\LC\R^{n+1}_+,x_{n+1}^{1-2s}\RC}dt\RC^{1/2}.
	$$   
Moreover, for any open set $\mathcal O\subset \R\times \R^{n+1}_+$, we define the space
$$
\W(\mathcal O)=\left\{F|_{\mathcal O}:\ F\in \W(\R\times \R^{n+1}_+)\right\}.
$$
\end{definition} 

With this definition in hand, we can formulate the existence of a parabolic Caffarelli-Silvestre extension operator.

\begin{proposition}
\label{prop:gen_Caff_Silv}
Let $s\in (0,1)$ and let $u \in \mathcal{H}^s(\R^{n+1} )$. Then there exists an extension operator
\begin{align*}
E_s : \mathcal{H}^s(\R \times \R^{n+1}_+) \rightarrow \mathcal{W}(\R \times \R^{n+1}_+), \ E_s u = \tilde{u}
\end{align*}
with the properties that $\tilde{u}$ is a weak solution to
\begin{align}
\label{eq:CS}
\begin{cases}
\left(x_{n+1}^{1-2s}\p_t - \nabla \cdot x_{n+1}^{1-2s}\nabla\right) \tilde{u} = 0& \mbox{ in } \R \times \R^{n+1}_+,\\
\tilde{u}  = u & \mbox{ on } \R \times \R^n \times \{0\},
\end{cases}
\end{align}
for which the following estimates hold:
\begin{align}
\label{eq:ext_bounds}
\begin{split}
\left\|\tilde{u}(\cdot, x_{n+1})-u(\cdot)\right\|_{\mathcal{H}^s( \R^{n+1})} &\rightarrow 0 \mbox{ as } x_{n+1} \rightarrow 0,\\
\lim\limits_{x_{n+1}\rightarrow 0}\left\| x_{n+1}^{1-2s} \p_{n+1} \tilde{u}(\cdot, x_{n+1})\right\|_{\mathcal{H}^{-s}( \R^{n+1})}
&\leq C \|u\|_{\mathcal{H}^s(\R \times \R^n)},\\
\left\|x_{n+1}^{1-2s}\p_{n+1}\tilde{u}(\cdot, x_{n+1})-d_s(\p_t - \D)^s u\right\|_{\mathcal{H}^{-s}(\R^{n+1})} &\rightarrow 0 \mbox{ as } x_{n+1}\rightarrow 0,\\
\left\|x_{n+1}^{\frac{1-2s}{2}} \nabla \tilde{u}\right\|_{L^2(\R \times \R^{n+1}_+)} 
&\leq C \|u\|_{\mathcal{H}^s(\R^{n+1})},\\
\left\|x_{n+1}^{\frac{1-2s}{2}} \tilde{u}\right\|_{L^2(\R \times \R^{n} \times (0,M))} 
&\leq C(M)\|u\|_{L^2( \R^{n+1})},
\end{split}
\end{align}
for some constant $C>0$ independent of $u, \tilde u$. Here $d_s=-\frac{2s\Gamma (-s)}{4^s \Gamma (s)}$, $M\in (0,\infty)$ is a finite number, and $C(M)>0$ is a constant depending $M$.
\end{proposition}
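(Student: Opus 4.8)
The plan is to construct $\tilde u = E_s u$ by a partial Fourier transform in the tangential variables $(t,x')\in\R^{n+1}$, reducing \eqref{eq:CS} to an explicit ODE in $x_{n+1}$, and then to read off all the stated properties from the boundary asymptotics of its solution together with Plancherel's theorem. First, writing $\widehat{\,\cdot\,}$ for the Fourier transform in $(t,x')$, \eqref{eq:CS} is equivalent to the family of ODEs
\begin{align*}
\p_{n+1}^2\widehat{\tilde u} + \frac{1-2s}{x_{n+1}}\p_{n+1}\widehat{\tilde u} - (i\rho+|\xi|^2)\widehat{\tilde u} = 0 \ \text{ on } (0,\infty),\qquad \widehat{\tilde u}\big|_{x_{n+1}=0} = \widehat u(\rho,\xi).
\end{align*}
Since $i\rho+|\xi|^2$ lies in the closed right half-plane, I fix the branch $w:=(i\rho+|\xi|^2)^{1/2}$ with $\mathrm{Re}\,w\ge0$, so that $\arg w\in[-\tfrac{\pi}{4},\tfrac{\pi}{4}]$ and $w^{2s}=(i\rho+|\xi|^2)^s$ is the symbol of $\LL^s$.

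\textbf{The profile and the boundary behaviour.} Substituting $\widehat{\tilde u}=\widehat u(\rho,\xi)\,\varphi(wx_{n+1})$ forces $\varphi''+\tfrac{1-2s}{y}\varphi'-\varphi=0$, which via $\varphi(y)=y^s\psi(y)$ becomes the modified Bessel equation of order $s$; the unique solution that is bounded at $0$, decays at $+\infty$, and is normalised by $\varphi(0)=1$ is $\varphi(y)=\tfrac{2^{1-s}}{\Gamma(s)}y^sK_s(y)$, holomorphic on $|\arg y|<\pi$. I will use two facts: (i) the near-zero expansion $\varphi(y)=1+O(y^{2s})+O(y^2)$, which yields an explicit nonzero limit $\lim_{y\to0}y^{1-2s}\varphi'(y)$ (this is what pins down $d_s=-\tfrac{2s\Gamma(-s)}{4^s\Gamma(s)}$), and (ii) the decay $|\varphi(z)|,|\varphi'(z)|\lesssim(1+|z|)^{C}e^{-\mathrm{Re}\,z}$ on $|\arg z|\le\tfrac{\pi}{4}$, valid since $\mathrm{Re}\,z\ge\tfrac{1}{\sqrt2}|z|$ there. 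Defining $\tilde u$ through $\widehat{\tilde u}(\rho,\xi,x_{n+1}):=\widehat u(\rho,\xi)\varphi(wx_{n+1})$, fact (i) and dominated convergence give $\|\tilde u(\cdot,x_{n+1})-u\|_{\mathcal H^s(\R^{n+1})}\to0$ as $x_{n+1}\to0$, while the symbol of $x_{n+1}^{1-2s}\p_{n+1}\tilde u$ equals $(wx_{n+1})^{1-2s}\varphi'(wx_{n+1})\,w^{2s}\widehat u$, which converges to $d_s w^{2s}\widehat u$, i.e. to the symbol of $d_s\LL^s u$; estimating the pre-limit symbols by $C(1+|i\rho+|\xi|^2|)^s$ (using (i) near $x_{n+1}=0$ and (ii) otherwise) and invoking dominated convergence upgrades these to the second and third displayed bounds in $\mathcal H^{-s}(\R^{n+1})$.

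\textbf{Energy estimates, weak form, conclusion.} By Plancherel in $(t,x')$,
\begin{align*}
\big\|x_{n+1}^{\frac{1-2s}{2}}\nabla\tilde u\big\|_{L^2(\R\times\R^{n+1}_+)}^2
&=\int_{\R^{n+1}}\Big(\int_0^\infty x_{n+1}^{1-2s}\big(|\xi|^2|\varphi(wx_{n+1})|^2+|w|^2|\varphi'(wx_{n+1})|^2\big)\,dx_{n+1}\Big)\\
&\qquad\qquad\times|\widehat u(\rho,\xi)|^2\,d\rho\,d\xi;
\end{align*}
substituting $y=|w|x_{n+1}$, the inner integral equals $|w|^{2s}\int_0^\infty y^{1-2s}\big(\tfrac{|\xi|^2}{|w|^2}|\varphi(e^{i\arg w}y)|^2+|\varphi'(e^{i\arg w}y)|^2\big)dy$, and since $|\xi|^2\le|w|^2$ and, by (i)--(ii), the integrals $\int_0^\infty y^{1-2s}|\varphi(e^{i\alpha}y)|^2\,dy$ and $\int_0^\infty y^{1-2s}|\varphi'(e^{i\alpha}y)|^2\,dy$ are finite and bounded uniformly for $\alpha\in[-\tfrac\pi4,\tfrac\pi4]$, the inner integral is $\le C|w|^{2s}=C|i\rho+|\xi|^2|^s\le C(1+|i\rho+|\xi|^2|)^s$; this gives the fourth estimate, and the fifth follows similarly (or from $|\varphi|\le1$ on the relevant sector and $\int_0^Mx_{n+1}^{1-2s}dx_{n+1}<\infty$). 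Hence $E_su:=\tilde u$ lies in $\mathcal W(\R\times\R^{n+1}_+)$ and depends linearly and boundedly on $u$. Finally, the weak formulation of \eqref{eq:CS} is verified first for $u\in\mathcal S(\R^{n+1})$ by integrating by parts in $x_{n+1}$ and invoking the ODE and Plancherel, and then for general $u\in\mathcal H^s(\R^{n+1})$ by density using the bounds just obtained.

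\textbf{Main obstacle and an alternative.} The new difficulty, relative to the classical elliptic Caffarelli--Silvestre extension where the spectral parameter is the nonnegative real number $|\xi|^2$ and everything happens on the positive real axis, is that here it is the genuinely complex number $i\rho+|\xi|^2$: the profile $\varphi$ must be evaluated along complex rays $\{e^{i\alpha}y:y>0\}$, $|\alpha|\le\tfrac\pi4$, and I must control the weighted $x_{n+1}$-integrals of $|\varphi|^2$ and $|w\varphi'|^2$ \emph{uniformly} in $\alpha$. This is feasible because on $|\alpha|\le\tfrac\pi4$ one has $\mathrm{Re}(e^{i\alpha}y)=y\cos\alpha\ge\tfrac{1}{\sqrt2}y$, so the Bessel asymptotics give exponential decay at infinity uniformly in $\alpha$, while near $0$ the singularities $y^{1-2s}$ and $y^{2s-1}$ are integrable because $0<s<1$; continuity in $\alpha$ then gives the uniform bound. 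A second, Bessel-free route is to set
\begin{align*}
\tilde u(t,x',x_{n+1}):=\frac{x_{n+1}^{2s}}{4^s\Gamma(s)}\int_0^\infty e^{-x_{n+1}^2/(4\tau)}\big(e^{-\tau\LL}u\big)(t,x')\,\frac{d\tau}{\tau^{1+s}},
\end{align*}
with $e^{-\tau\LL}$ the contraction semigroup on $L^2(\R^{n+1})$ generated by $\p_t-\D$ (cf. Remark~\ref{rmk:semi_group}), and to deduce all the stated properties from semigroup calculus together with the subordination formula for $\LL^s$; see \cite{stinga2017regularity}.
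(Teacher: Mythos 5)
Your construction is essentially the paper's own proof: Fourier transform in $(t,x')$, the modified-Bessel profile $y^{s}K_{s}(y)$ evaluated at the complex argument $(i\rho+|\xi|^{2})^{1/2}x_{n+1}$, Bessel asymptotics plus Plancherel for the boundary limits and the weighted energy bounds, and a density argument to pass from Schwartz data to general $u\in\mathcal{H}^{s}(\R^{n+1})$ (your uniform-in-$\arg$ control on the sector $|\arg z|\le \pi/4$ just spells out what the paper delegates to the cited references, and your semigroup/subordination alternative is the route of Remark \ref{rmk:semi_group}). The only discrepancy is harmless: with your normalization $\varphi(y)=1+\frac{\Gamma(-s)}{4^{s}\Gamma(s)}y^{2s}+O(y^{2})$ one gets $\lim_{y\to 0}y^{1-2s}\varphi'(y)=\frac{2s\Gamma(-s)}{4^{s}\Gamma(s)}=-d_{s}$ rather than $d_{s}$ as defined in the statement, but the paper's proof likewise fixes this constant only up to normalization, and nothing downstream uses more than $d_{s}\neq 0$.
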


\begin{proof}
\emph{Step 1: Representation of the solution.}
We first assume that $u\in \mathcal{S}(\R^{n+1})$. Then Fourier transforming the equation \eqref{eq:CS} in time and in tangential directions in space leads to the ODE
\begin{align*}
\begin{cases}
i\rho \F \tilde{u} +|\xi|^2 \F \tilde{u} - (1-2s) x_{n+1}^{-1} \p_{n+1}(\F \tilde{u})- \p_{n+1}^2(\F \tilde{u})  = 0 & \mbox{ for } x_{n+1} \in (0,\infty),\\
 \F \tilde{u}  = \F u  &\mbox{ for } x_{n+1}=0. 
\end{cases}
\end{align*}
As in the case of the fractional Laplacian, this ODE can be transformed into a modified Bessel equation (see \cite[Section 4]{BG2017}, \cite[Appendix A]{GFRue19} and \cite[Section 4]{RS17a}). Searching for a function with decay at infinity
leads to
\begin{align*}
\F \tilde{u}( \rho,\xi, x_{n+1})
= C_s \F u(\rho,\xi ) (|\xi|^2+ i \rho)^{\frac{s}{2}} x_{n+1}^s K_s((|\xi|^2+ i \rho)^{\frac{1}{2}} x_{n+1}),
\end{align*}
where $K_s$ denotes the modified Bessel function of the second kind. We next prove that this representation and the asymptotics of the Bessel function imply the estimates of the Proposition for $u\in \mathcal{S}(\R^{n+1})$. In a final step, we extend the identities to $u \in \mathcal{H}^s(\R^{n+1})$ by density.\\

\emph{Step 2: Derivation of the estimates.}
The estimates follow from the asymptotics of the modified Bessel functions. Indeed, we note that 
\begin{align}
\label{eq:asympt}
\begin{split}
&\frac{d}{dt}(t^s K_s(t)) = c_s t^s K_{s-1}(t), \
K_s(t) \sim t^{-s} \mbox{ as } t \rightarrow 0,\
K_s(t) \sim \sqrt{\frac{\pi}{2t}} e^{-t} \mbox{ as } t \rightarrow \infty \mbox{ for } s \geq 0,\\
&K_s(t) = K_{-s}(t) \mbox{ for } s \leq 0, \\
\end{split}
\end{align}
With \eqref{eq:asympt} in hand, let us for instance prove the second and third estimates in \eqref{eq:ext_bounds} (the remaining ones are obtained similarly; we refer to \cite{GFRue19} for the analogues in the elliptic setting and also \cite{BG2017}). For the second bound in \eqref{eq:ext_bounds}, we note that by \eqref{eq:asympt} and $s\in (0,1)$, we have
\begin{align*}
& \quad x_{n+1}^{1-2s} \p_{n+1} \F \tilde{u}(\rho,\xi, x_{n+1} )\\
&= c_s x_{n+1}^{1-2s} (|\xi|^s + i \rho)^{\frac{s}{2}} x_{n+1}^s K_{s-1}((|\xi|^2 + i \rho)^{\frac{1}{2}}x_{n+1}) \F u(\rho,\xi ) (|\xi|^2 + i\rho)^{\frac{1}{2}}\\
& = c_s x_{n+1}^{1-s} (|\xi|^2 + i \rho)^{\frac{s+1}{2}} K_{1-s}((|\xi|^2 + i \rho)^{\frac{1}{2}} x_{n+1}) \F u(\rho,\xi ).
\end{align*}

Using the bounds for $K_{s-1}$ and denoting the homogeneous Sobolev spaces by $\dot{\mathcal{H}}^s(\R \times \R^n)$, we estimate as follows
\begin{align*}
&\quad \left\|x_{n+1}^{1-2s} \p_{n+1} \tilde{u}(\cdot, x_{n+1})\right\|_{\mathcal{H}^{-s}( \R \times \R^n)}
\leq \left\|x_{n+1}^{1-2s} \p_{n+1} \tilde{u}(\cdot, x_{n+1})\right\|_{\dot{\mathcal{H}}^{-s}( \R \times \R^n)}\\
&
\leq c_{s} \left\|x_{n+1}^{1-s} \left||\xi|^2 + i \rho\right|^{\frac{1}{2}} K_{1-s}((|\xi|^2 + i \rho)^{\frac{1}{2}}x_{n+1}) \F u \right\|_{L^2\left(\left\{(\rho,\xi)\in \R \times \R^n: \ ||\xi|^2 +i \rho|^{\frac{1}{2}}\geq \frac{\epsilon}{x_{n+1}}\right\}\right)}\\
& \quad + c_s \left\|x_{n+1}^{1-s} \left||\xi|^2 + i \rho\right|^{\frac{1}{2}} K_{1-s}((|\xi|^2 + i \rho)^{\frac{1}{2}}x_{n+1}) \F u\right\|_{L^2\left(\left\{ (\rho, \xi) \in \R \times \R^n: \ ||\xi|^2 + i \rho|^{\frac{1}{2}}< \frac{\epsilon}{x_{n+1}}\right\}\right)}\\
& \leq c_s \sup\limits_{|z|>\epsilon} \left|z^{1-s}K_{1-s}(z)\right| \left\|\left||\xi|^2 + i\rho\right|^{\frac{s}{2}}\F u\right\|_{L^2\left(\left\{(\rho,\xi): \ ||\xi|^2 + i \rho|^{\frac{1}{2}}> \frac{\epsilon}{x_{n+1}} \right\}\right)}    \\
& \quad + c_s \sup\limits_{|z|\leq \epsilon} \left|z^{1-s}K_{1-s}(z)\right| \|u\|_{\mathcal{H}^s( \R^{n+1})}. 
\end{align*}
Hence, as $x_{n+1}\rightarrow 0$,
\begin{align*}
 \left\|x_{n+1}^{1-2s} \p_{n+1} \tilde{u}(\cdot, x_{n+1})\right\|_{\mathcal{H}^{-s}( \R^{n+1})}
& \leq 2 c_s \sup\limits_{|z|\leq \epsilon} \left|z^{1-s}K_{1-s}(z)\right| \|u\|_{\mathcal{H}^s( \R^{n+1})},
\end{align*}
where we used that since $u \in \mathcal{H}^s( \R^{n+1})$, it holds
\begin{align*}
 \left\|\left||\xi|^2 + i\rho\right|^{\frac{s}{2}}\F u\right\|_{L^2\left(\left\{(\rho,\xi): \ ||\xi|^2 + i \rho|^{\frac{1}{2}}> \frac{\epsilon}{x_{n+1}} \right\}\right)} \rightarrow 0 \mbox{ as }  x_{n+1}\rightarrow 0. 
 \end{align*}
Now as $x_{n+1} \rightarrow 0$, $ \sup\limits_{|z|\leq\epsilon} \left|z^{1-s}K_{1-s}(z)\right|$ is bounded by \eqref{eq:asympt} concluding the proof of the second estimate in \eqref{eq:ext_bounds}.

Analogously, we obtain that for $\tilde{c}_s \neq 0$ chosen appropriately, we have
\begin{align*}
&\quad \left\|\tilde{c}_s x_{n+1}^{1-2s} \p_{n+1} \tilde{u}(\cdot, x_{n+1})-(\p_t -\D)^s u\right\|_{\mathcal{H}^{-s}( \R^{n+1})}\\
& = \left\|\left||\xi|^2 + i \rho\right|^{-\frac{s}{2}}(c_s \tilde{c}_s x_{n+1}^{1-s}(|\xi|^2 + i\rho)^{\frac{s+1}{2}}K_{1-s}((|\xi|^2 + i\rho)^{\frac{1}{2}}x_{n+1}) -(i\rho + |\xi|^2)^{s})\F u \right\|_{L^2(\R^{n+1})} \\
& \leq \left(c_s \tilde{c}_s \sup\limits_{|z|>\epsilon}|z^{1-s} K_{1-s}(z)|+1\right) \left\|\left||\xi|^2 + i\rho\right|^{\frac{s}{2}} \F u\right\|_{L^2\left(\left\{(\rho,\xi): \ ||\xi|^2 + i \rho|^{\frac{1}{2}}> \frac{\epsilon}{x_{n+1}} \right\}\right)}\\
& \quad +  \sup\limits_{|z|\leq \epsilon} \left|c_s\tilde{c}_s z^{1-s} K_{1-s}(z)-1\right| \left\|\left||\xi|^2 + i \rho\right|^{\frac{s}{2}} \F u\right\|_{L^2(\R^{n+1})}.
\end{align*}
Choosing $c_s\tilde{c}_s \neq 0$ in such a way that $\tilde{c}_s z^{1-s}K_{1-s}(z)\rightarrow 1$ as $z\rightarrow 0$ implies the claim by first letting $x_{n+1}\rightarrow 0$ and then $\epsilon \rightarrow 0$.

The arguments for the other estimates are similar.\\

\emph{Step 3: Extension to $u\in \mathcal{H}^s(\R^{n+1})$.}
For $u \in \mathcal{H}^s( \R^{n+1})$ the bulk estimates in \eqref{eq:ext_bounds} imply that for any sequence $u_k \in \mathcal{S}(\R^{n+1})$ with $u_k \rightarrow u$ in $\mathcal{H}^s(\R^{n+1})$ a limit $\tilde{u}$ of the functions $\tilde{u}_k:= E_s u_k$ exists in $\mathcal{W}(\R \times \R^{n+1}_+)$. Moreover, the first bound in \eqref{eq:ext_bounds} implies that $\tilde{u}(t,x',x_{n+1})\rightarrow u(t,x')$ in $\mathcal{H}^s( \R^{n+1})$. Using the weak form of the equation \eqref{eq:CS} one also obtains that $\tilde{u}$ solves this weakly. Finally, the second and third estimates in \eqref{eq:ext_bounds} yield that $\lim\limits_{x_{n+1}\rightarrow }x_{n+1}^{1-2s} \p_{n+1} \tilde{u}$ exists in $\mathcal{H}^{s}( \R^{n+1})$ and the equality
\begin{align*}
\lim\limits_{x_{n+1}}x_{n+1}^{1-2s} \p_{x_{n+1}} \tilde{u} = d_s\mathcal{L}^s u
\end{align*}
holds (as $\mathcal{H}^{-s} (\R^{n+1})$ functions) for some constant $d_s$ depending only on $s\in (0,1)$.
\end{proof}

We recall that weak solutions to (the local version of) the extension problem satisfy Caccioppoli estimates for parabolic equations (in weighted Sobolev spaces). We remark that by a weak solution we simply mean a function $\tilde{u}\in \mathcal{W}(\R \times \R^{n+1})$ such that the equation \eqref{eq:CS} holds tested against $H^1_0(\R \times \R^{n+1}_+)$ functions (in the case of the Dirichlet problem) and tested against $H^1_0(\R \times \overline{\R^{n+1}_+})$ functions in the Neumann case (note that the resulting boundary terms in the Neumann case are well-defined as $H^1_0(\R \times \overline{\R^{n+1}_+}) \rightarrow \mathcal{H}^s(\R^{n+1})$ by the trace estimate). 
In the sequel (in particular in our Carleman estimates), the Caccioppoli estimates will allow us to control gradient contributions in terms of $L^2$ terms. 

Recall that we denote $x'=(x_1,\cdots,x_n)\in\R^n$. Let us introduce the following notation:
Given $r\in (0,\infty)$, $x_0\in \R^n$, we consider 
\begin{align*}
\begin{split}
& B_r^+(x_0,0):=\left\{X=(x',x_{n+1})\in \R^{n+1}: \ |(x',x_{n+1})-(x_0,0)|<r \right\}\cap \{x_{n+1}>0\}\subset \R^{n+1},  \\
& B_r'(x_0):= \left\{x'\in \R^n: \ |x'-x_0|<r \right\} \subset \R^n.
\end{split}
\end{align*}
In particular, when $x_0=0$, we simply denote $B_r^+ :=B_r^+(0,0)$ and $B'_r:=B'_r(0)$.
 
\begin{lemma}[Caccioppoli inequality]
\label{lem:Cacc}
Let $\tilde{u} \in \mathcal{W}((0,1)\times B_1^+)$ be a weak solution to 
\begin{align*}
\begin{cases}
x_{n+1}^{1-2s}\p_t \tilde{u} - \nabla \cdot (x_{n+1}^{1-2s} \nabla \tilde{u} ) = 0 & \mbox{ in } (0,1)\times B_1^+ ,\\
\tilde{u} = 0 & \mbox{ on } (0,1)\times B_1',
\end{cases}
\end{align*}
or to 
\begin{align*}
\begin{cases}
x_{n+1}^{1-2s}\p_t \tilde{u} - \nabla \cdot x_{n+1}^{1-2s} \nabla \tilde{u}  = 0 & \mbox{ in } (0,1)\times B_1^+  ,\\
\lim\limits_{x_{n+1}\rightarrow 0} x_{n+1}^{1-2s} \p_{n+1} \tilde{u}  = 0 & \mbox{ on } (0,1)\times B_1'.
\end{cases}
\end{align*}
Assume that $\eta \in C^{\infty}((0,1)\times B_1^+)$ with $\supp(\eta) \subset (0,1)\times \overline{B}_1^+$.
Then, 
\begin{align*}
&\quad \sup\limits_{t\in (0,1)}\left\|x_{n+1}^{\frac{1-2s}{2}}\eta \tilde{u}\right\|_{L^2(B_1^+)}^2
+ \left\|x_{n+1}^{\frac{1-2s}{2}}\eta |\nabla \tilde{u}|\right\|_{L^2((0,1)\times B_1^+ )}^2 \\
&\leq   C \left\|x_{n+1}^{\frac{1-2s}{2}}(|\eta \p_t \eta| + |\nabla \eta|^2)^{\frac{1}{2}} \tilde{u}\right\|_{L^2((0,1)\times B_1^+)}^2,
\end{align*}
for some constant $C>0$ independent of $\tilde u$.
\end{lemma}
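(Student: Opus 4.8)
The plan is to prove the Caccioppoli inequality via the standard energy method for (degenerate) parabolic equations, using $\eta^2 \tilde{u}$ as a test function and exploiting the fact that the weight $x_{n+1}^{1-2s}$ belongs to the Muckenhoupt class $A_2$, so that the usual manipulations go through. First I would test the weak formulation of the equation against $\varphi := \eta^2 \tilde{u}$, which is an admissible test function by the support assumption on $\eta$ (it vanishes near the lateral boundary in $t$ and near the curved part of $\partial B_1^+$, while its trace on $(0,1)\times B_1'$ is handled by the respective boundary condition — it is zero in the Dirichlet case, and the boundary term $\lim_{x_{n+1}\to 0} x_{n+1}^{1-2s}\partial_{n+1}\tilde u$ vanishes in the Neumann case). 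This yields the identity
\begin{align*}
\int_{(0,1)\times B_1^+} x_{n+1}^{1-2s}(\partial_t \tilde{u})\, \eta^2 \tilde{u}\, dX dt + \int_{(0,1)\times B_1^+} x_{n+1}^{1-2s} \nabla \tilde{u}\cdot \nabla(\eta^2 \tilde{u})\, dX dt = 0.
\end{align*}

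Next I would rewrite the two terms. For the parabolic term, I use $(\partial_t \tilde u)\eta^2 \tilde u = \tfrac12 \partial_t(\eta^2 \tilde u^2) - \eta (\partial_t \eta)\tilde u^2$, so that after integrating in $t$ the first piece becomes a boundary-in-time contribution $\tfrac12 \int_{B_1^+} x_{n+1}^{1-2s}(\eta^2 \tilde u^2)(t,\cdot)\, dX$ evaluated at the time slices (which, together with the support condition, produces the $\sup_t$ term on the left-hand side after freezing an arbitrary time level and using a cutoff that is $1$ up to that level), while $\eta(\partial_t\eta)\tilde u^2$ feeds into the right-hand side. For the elliptic term, I expand $\nabla(\eta^2 \tilde u) = \eta^2 \nabla \tilde u + 2\eta \tilde u \nabla \eta$, giving $\int x_{n+1}^{1-2s}\eta^2 |\nabla \tilde u|^2 + 2\int x_{n+1}^{1-2s}\eta \tilde u\, \nabla\tilde u\cdot\nabla\eta$. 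The cross term is absorbed into the good gradient term via the weighted Cauchy–Schwarz / Young inequality $2|\eta \tilde u \nabla\tilde u\cdot\nabla\eta| \le \tfrac12 \eta^2 |\nabla\tilde u|^2 + 2|\nabla\eta|^2 \tilde u^2$, all against the weight $x_{n+1}^{1-2s}$. Combining and rearranging then produces exactly the claimed estimate, with the constant $C$ absolute.

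The main technical obstacle is the justification that $\eta^2 \tilde{u}$ is genuinely an admissible test function and that all integration-by-parts steps (in particular the time integration producing the supremum term) are valid for $\tilde u \in \mathcal{W}((0,1)\times B_1^+)$ rather than for smooth functions. This is the standard issue in parabolic energy estimates: one does not a priori know $\partial_t \tilde u$ has enough regularity to justify $\int (\partial_t\tilde u)\eta^2\tilde u = \tfrac12 \partial_t \int \eta^2 \tilde u^2$ pointwise in $t$. The clean way around this is to work with a Steklov average $\tilde u_h(t) := h^{-1}\int_t^{t+h} \tilde u(\tau)\, d\tau$ of $\tilde u$ in the time variable, carry out all the above manipulations for $\tilde u_h$ (which is Lipschitz in $t$ with values in the weighted $H^1$ space, so the computations are rigorous), and then pass to the limit $h \to 0$ using $\tilde u_h \to \tilde u$ in $L^2((0,1); H^1(B_1^+, x_{n+1}^{1-2s}))$ together with lower semicontinuity of the norms on the left-hand side; the $A_2$ property of the weight guarantees the functional-analytic framework (density of smooth functions, validity of the weighted Sobolev/Poincaré apparatus) needed for this limiting procedure. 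The remaining steps — the algebraic expansions and the absorption of cross terms via Young's inequality — are entirely routine.
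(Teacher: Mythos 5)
Your proposal is correct and follows essentially the same energy argument as the paper: testing with $\eta^2\tilde u$ (equivalently, differentiating $\|x_{n+1}^{\frac{1-2s}{2}}\eta\tilde u\|_{L^2(B_1^+)}^2$ in time and using the equation), integrating by parts in space with the vanishing Dirichlet or weighted Neumann trace killing the boundary term, and absorbing the cross term by Young's inequality before integrating in $t$ and using the support of $\eta$. Your additional Steklov-averaging step is a standard rigor refinement of the same computation, which the paper carries out formally, so the two proofs coincide in substance.
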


\begin{proof}
We differentiate with respect to $t$ and use the equation for $\tilde{u}$:

\begin{align*}
\frac{d}{dt} \left\|x_{n+1}^{\frac{1-2s}{2}} \eta \tilde{u}\right\|_{L^2(B_1^+)}^2
&= 2\int\limits_{B_1^+} x_{n+1}^{1-2s} \left(\tilde{u} \eta^2 \p_t \tilde{u} + \tilde{u}^2 \eta \p_t \eta \right)dx\\
&=-2 \int\limits_{B_1^+} x_{n+1}^{1-2s}\nabla(\eta^2 \tilde{u})\cdot \nabla \tilde{u} dx  + 2 \int\limits_{B_1^+} x_{n+1}^{1-2s} \tilde{u}^2 \eta \p_t \eta dx\\
& =- 4 \int\limits_{B_1^+} x_{n+1}^{1-2s}  \tilde{u} \eta \nabla \eta \cdot \nabla \tilde{u} dx -2\int\limits_{B_1^+} x_{n+1}^{1-2s} \eta^2 |\nabla \tilde{u}|^2 dx + 2 \int\limits_{B_1^+}x_{n+1}^{1-2s} \tilde{u}^2 \eta \p_t \eta dx\\
& \leq -\int\limits_{B_1^+} x_{n+1}^{1-2s} \eta^2 |\nabla \tilde{u}|^2 dx + 4 \int\limits_{B_1^+} x_{n+1}^{1-2s} \tilde{u}^2 |\nabla \eta |^2  dx+ 2 \int\limits_{B_1^+}x_{n+1}^{1-2s} \tilde{u}^2 \eta \p_t \eta dx,
\end{align*} 
where we used the vanishing trace of $\tilde{u}$ (or of $\lim\limits_{x_{n+1}\rightarrow 0} x_{n+1}^{1-2s}\p_{x_{n+1}} \tilde{u}$) on $B_1'$ and applied Young's inequality.
Rearranging this, integrating in $t$ and using the support condition for $\eta$ then proves the claim.
\end{proof}

	Note that the Caccioppoli inequality also holds for the backward degenerate heat equation $x_{n+1}^{1-2s}\p_t \tilde v + \nabla \cdot (x_{n+1}^{1-2s}\nabla \tilde v)=0$ in $(0,1)\times B_1^+(0)$ which follows immediately from changing $t\mapsto -t$.

Next, we state the Schauder type estimates from \cite{BG2017} (see also \cite[Appendix A]{KRS16} for weighted Schauder type estimates), which can also be viewed as a consequence of the pseudolocality of our operator:

\begin{lemma}[Theorem 5.1 in \cite{BG2017}]
\label{lem:reg_up_to_boundary}
Let $V\in L^{\infty}((0,1)\times B_1')$ and $v \in \mathcal{W}((0,1)\times B_1^+)$ be a weak solution to
\begin{align*}
\begin{cases}
x_{n+1}^{1-2s}\p_t v - \nabla \cdot (x_{n+1}^{1-2s}\nabla v)  = 0& \mbox{ in } (0,1)\times B_1^+,\\
\lim\limits_{x_{n+1}\rightarrow 0} x_{n+1}^{1-2s} \p_{n+1} v  = Vv & \mbox{ in } (0,1) \times B_1'.
\end{cases}
\end{align*}
Then, for some $\alpha\in (0,1)$ and $\delta\in (0,1)$, we have that $\p_{x'} v,\p_t v, x_{n+1}^{1-2s}\p_{n+1}v  \in C^{\alpha,\alpha/2}((\delta,1-\delta)\times B_{\frac{1}{2}}')$, where $C^{\alpha, \alpha/2}$ denotes the parabolic H\"older space with exponent $\alpha \in (0,1)$. 

Moreover, if $V \in C^{k}((0,1)\times B_1')$, we have that $\p^{\alpha}_{t,x'} v$ and $x_{n+1}^{1-2s}\p_{t,x'}^{\alpha} \p_{x_{n+1}} v \in C^{\alpha,\alpha/2}((\delta,1-\delta)\times B_{\frac{1}{2}}')$, where $\p^{\alpha}_{t,x'}= \p_t^{\alpha_0}\p_{x_1}^{\alpha_1}\dots \p_{x_{n}}^{\alpha_n}$ for $\alpha \in \N^{1+n}$ with $2\alpha_0 + \alpha_1 + \dots + \alpha_n \leq k$.
\end{lemma}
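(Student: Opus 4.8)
The plan is to separate the estimate into an interior part and a boundary part, the latter being the real content. Away from the degeneracy set $\{x_{n+1}=0\}$ the equation $x_{n+1}^{1-2s}\p_t v - \nabla\cdot(x_{n+1}^{1-2s}\nabla v)=0$ is uniformly parabolic with smooth coefficients, so classical interior parabolic Schauder theory immediately yields that $v$ is smooth, with quantitative bounds, on compact subsets of $(0,1)\times B_1^+$ staying away from $\{x_{n+1}=0\}$. Hence the whole difficulty is Hölder continuity of $\p_{x'}v$, $\p_t v$ and the conormal quantity $x_{n+1}^{1-2s}\p_{n+1}v$ \emph{up to} the flat boundary. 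The basic tool is the De Giorgi--Nash--Moser theory for degenerate parabolic equations whose coefficient matrix is comparable to an $A_2$-Muckenhoupt weight: here $x_{n+1}^{1-2s}$, which is $A_2$ precisely because $1-2s\in(-1,1)$ for $s\in(0,1)$ (cf.\ Chiarenza--Serapioni, Guti\'errez--Wheeden). This gives an a priori interior H\"older modulus for solutions of such equations, with constants depending only on $s$ and the $A_2$-constant; the energy input feeding it comes from cutoff functions and the Caccioppoli inequality of Lemma \ref{lem:Cacc}.

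First I would treat the tangential derivatives. Since the operator has coefficients independent of $t$ and of $x'$, difference quotients of $v$ in these variables again solve the same bulk equation; passing to the limit shows that $\p_t v$ and $\p_{x_j}v$, $j=1,\dots,n$, are weak solutions of the same degenerate equation, now with boundary flux $\p_t(Vv)$, respectively $\p_{x_j}(Vv)$, on $\{x_{n+1}=0\}$. For the first assertion I would only carry this out to first order and perform an even reflection across $\{x_{n+1}=0\}$ (the reflection compatible with the Neumann condition, which preserves the weighted structure), treating the flux $Vv$, which is merely bounded, as a lower-order perturbation in the De Giorgi--Nash--Moser iteration adapted to the inhomogeneous Neumann problem. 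This yields $\p_{x'}v,\p_t v\in C^{\alpha,\alpha/2}((\delta,1-\delta)\times B_{1/2}')$.

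Next I would handle the conormal derivative by the standard Caffarelli--Silvestre duality: setting $w:=x_{n+1}^{1-2s}\p_{n+1}v$, a direct computation using the equation shows $w$ is a weak solution of the \emph{conjugate} degenerate equation $x_{n+1}^{2s-1}\p_t w - \nabla\cdot(x_{n+1}^{2s-1}\nabla w)=0$, whose weight $x_{n+1}^{2s-1}=x_{n+1}^{1-2(1-s)}$ is again $A_2$, and that $w=Vv$ on $\{x_{n+1}=0\}$ (hence $w=0$ when $V\equiv0$). In the homogeneous case an \emph{odd} reflection reduces matters to an interior degenerate problem, and the same input gives $w\in C^{\alpha,\alpha/2}$; in general one uses the already-established boundedness (and, in the bootstrap, H\"older continuity) of $v$ to regard $Vv$ as a known Dirichlet datum of the right regularity, subtracts a $\W$-extension of it, and applies the homogeneous estimate to the difference. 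For the higher-order statement, once $v\in C^\alpha$ up to the boundary and $V\in C^k$, the product $Vv$ is as regular as $V$, so iterating the tangential-derivative argument picks up one more tangential derivative per step, and the equation converts each gained tangential derivative into control of $x_{n+1}^{1-2s}\p_{n+1}$ of that derivative; induction on $|\alpha|\le k$ finishes the claim.

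The main obstacle is the base case: the up-to-the-boundary $C^\alpha$ estimate for the degenerate equation with a Neumann-type condition and a \emph{merely bounded} potential $V$. This is where one genuinely needs the weighted De Giorgi--Nash--Moser machinery (intrinsic parabolic cylinders, weighted Sobolev and Poincar\'e inequalities for $A_2$ weights, and the oscillation-decay iteration) together with the correct even/odd reflections to dispose of the boundary condition; the interior Schauder estimates and the subsequent bootstrap for smooth $V$ are, by comparison, routine. I would follow \cite{BG2017} (and the elliptic analogues in \cite{KRS16}) for the precise implementation.
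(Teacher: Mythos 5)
Your outline is fine as a roadmap, but note first how the paper itself handles this lemma: it does \emph{not} prove the first assertion at all. It is quoted as Theorem 5.1 of \cite{BG2017} (``a direct result of the Schauder estimates in \cite{BG2017}''), and the only argument actually supplied is the difference-quotient/translation-invariance bootstrap for the higher-order statement, which coincides with your last step. Your proposal therefore takes a genuinely more self-contained route: you sketch a proof of the cited base case itself, via the $A_2$-weighted De Giorgi--Nash--Moser theory (the weight $x_{n+1}^{1-2s}$ being $A_2$ for $s\in(0,1)$), even reflection across $\{x_{n+1}=0\}$ for the Neumann condition, the conjugate-equation observation that $w=x_{n+1}^{1-2s}\p_{n+1}v$ solves the equation with weight $x_{n+1}^{2s-1}$ and carries $Vv$ as Dirichlet datum, with the Caccioppoli estimate of Lemma \ref{lem:Cacc} as energy input. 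That is indeed the architecture behind the estimates in \cite{BG2017} and \cite{KRS16}; the paper's citation buys brevity, your version buys self-containedness at the price of redoing the weighted boundary regularity theory.

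There is, however, one step of your base case that does not work as written. With $V$ merely in $L^{\infty}$ you cannot differentiate the boundary condition (the fluxes $\p_{x_j}(Vv)$ and $\p_t(Vv)$ are not functions), and your fallback --- even reflection plus treating the bounded flux $Vv$ as a perturbation in the De Giorgi--Nash--Moser iteration --- only yields H\"older continuity of $v$ itself up to $\{x_{n+1}=0\}$, not of $\p_{x'}v$, $\p_t v$ or $x_{n+1}^{1-2s}\p_{n+1}v$: already for $s=\tfrac12$ (harmonic functions) a bounded Neumann datum gives $C^{0,\alpha}$ for every $\alpha<1$ but in general no continuity of the gradient at the boundary, and in your dual picture $w$ then has merely bounded Dirichlet data, which likewise does not give a H\"older modulus for $w$ up to the boundary. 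To obtain H\"older first derivatives one needs the boundary datum $Vv$ itself to be H\"older, hence a modulus of continuity of $V$ beyond boundedness (after the preliminary step showing $v$ is H\"older); this is how the Schauder-type estimates of \cite{BG2017} are actually run, under regularity hypotheses on $V$ stronger than plain boundedness, and it is all that is needed in this paper, since the lemma is only invoked in a region where the Neumann datum vanishes identically. So either strengthen the assumption on $V$ in your base case to match the setting in which the cited estimates are proved, or do as the paper does and invoke \cite{BG2017} directly; the remainder of your outline, in particular the difference-quotient induction for the $V\in C^{k}$ statement, agrees with the paper's argument.
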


\begin{proof}
The first statement is a direct result of the Schauder estimates in \cite{BG2017}. The higher regularity result in time and the tangential directions in space follows from considering difference quotients in time and space which is possible by the translation invariance in these directions.
\end{proof}

\begin{remark}
Note that the extension property also holds for the adjoint fractional parabolic operator $\LL^s_\ast$. As the arguments are analogous to the ones presented above, we do not discuss the details of this.
\end{remark}

\section{Unique continuation property}\label{Section 5}

In this section, we will show the global weak unique continuation property for the fractional parabolic operator $\LL^s$, which is stated in Theorem \ref{Thm UCP}. 
In order to prove the desired result, we transfer the unique continuation statement for the operator $\LL^s$ into a unique continuation statement for the extension operator from \eqref{eq:CS}. The problem hence turns into a (global) weak boundary unique continuation result for this operator.

Next, we introduce the notion of vanishing of infinite order that we are going to use in the sequel:

\begin{definition}\label{Def of vanishing to infinite}
We say that a function $\tilde{u}\in \mathcal{W}(\R\times \R^{n+1}_+)\cap C^0((t_0-r^2,t_0+r^2)\times \overline{B_r^+(x_0,0)})$, where $r>0$ is a small radius, \emph{strongly vanishes to infinite order at a point $(t_0,x_0,0)\in  \R\times \R^{n}\times \{x_{n+1}=0\}$} provided that 
$$
\lim_{x_{n+1} \to 0}x_{n+1}^{-m}\tilde{u}(t_0 ,x_0, x_{n+1})=0 \text{ for any }m>0.
$$
\end{definition}

Notice that when the function $\tilde{u}$ is locally  $C^{\infty}$-smooth (up to the bounday $\{x_{n+1}=0\}$), then the above definition is equivalent to the classical definition for a function $\tilde{u}$ which vanishes of infinite order at a given point, that is, $\tilde{u}$ and all its derivatives vanish at that point.

In the sequel, we reduce the statement of Theorem \ref{Thm UCP} to the global (boundary) weak unique continuation property for the extended parabolic problem and then provide an independent proof of this statement. Here we crucially rely on a Carleman estimate (see Proposition \ref{prop:Carl_eucl}). Before however turning to this, we first show that the Caffarelli-Silvestre extension of $u$ vanishes of infinite order at any boundary point $(0,r)\times B_{r}'\subset (0,1) \times \mathcal{U} \times \{0\} $, where $\mathcal{U}$ is an open subset of $\R^n$:

\begin{lemma}
\label{lem:inf_van_order}
Let $s\in (0,1)$ and $u \in \mathcal{H}^s( \R^{n+1})$. Assume that $u\equiv 0$ and $(\p_t -\D)^s u \equiv 0$ in $(0,1) \times \mathcal{U} $. Let $\tilde{u}$ denote the Caffarelli-Silvestre extension of $u$. Then we have that for any $m\in \N$
\begin{align*}
\lim\limits_{x_{n+1}\rightarrow 0} x_{n+1}^{-m} \tilde{u}(t,x',x_{n+1}) = 0 \mbox{ for } (t,x')\in (\delta,1-\delta)\times \mathcal{U}',
\end{align*}
for  some $\delta\in (0,1)$, where the open set $\mathcal{U}'$  is strictly contained in $\mathcal{U}\subset \R^n$.
\end{lemma}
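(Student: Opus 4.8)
The plan is to transfer the vanishing information from the boundary slice $(0,1)\times\mathcal{U}\times\{0\}$ into the bulk, and then run a Schauder bootstrap using the higher regularity result of Lemma~\ref{lem:reg_up_to_boundary}. First I would observe that since $u\equiv 0$ and $\LL^s u\equiv 0$ in $(0,1)\times\mathcal{U}$, the Caffarelli--Silvestre extension $\tilde u$ from Proposition~\ref{prop:gen_Caff_Silv} satisfies the degenerate parabolic equation
\begin{align*}
x_{n+1}^{1-2s}\p_t\tilde u-\nabla\cdot(x_{n+1}^{1-2s}\nabla\tilde u)=0\quad\text{in }(0,1)\times B_1^+(x_0,0),
\end{align*}
together with the homogeneous Dirichlet \emph{and} (weighted) Neumann data $\tilde u=0$ and $\lim_{x_{n+1}\to 0}x_{n+1}^{1-2s}\p_{n+1}\tilde u= d_s\LL^su=0$ on $(0,1)\times B_1'(x_0)$, for any ball $B_1'(x_0)\subset\mathcal{U}$ (after rescaling so that the relevant parabolic cylinder has the normalized size). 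This is the point where the pseudolocality of $\LL^s$ is essential: the boundary conditions are genuinely \emph{local} in $(t,x')$, so they may be imposed on a subdomain of $\mathcal{U}$.

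Next I would appeal to Lemma~\ref{lem:reg_up_to_boundary} with $V\equiv 0$: since $V\in C^k$ for every $k$, we get that $\p^\alpha_{t,x'}\tilde u$ and $x_{n+1}^{1-2s}\p^\alpha_{t,x'}\p_{x_{n+1}}\tilde u$ are parabolically H\"older continuous up to $\{x_{n+1}=0\}$ on $(\delta,1-\delta)\times B'_{1/2}(x_0)$ for all multi-indices $\alpha$. Because $\tilde u$ vanishes on the flat boundary, each tangential derivative $\p^\alpha_{t,x'}\tilde u$ also vanishes there, and the weighted normal derivative $x_{n+1}^{1-2s}\p_{n+1}\p^\alpha_{t,x'}\tilde u$ tends to $0$ there as well. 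I would then convert this into the infinite-order decay statement. The cleanest route is to perform the usual change of variables $y=x_{n+1}^{2s}/(2s)$ (or equivalently to recall, as in the elliptic Caffarelli--Silvestre setting, that a $\LL$-caloric-type function which is ``conormally flat'' near $\{x_{n+1}=0\}$ is polyhomogeneous in $x_{n+1}$): the vanishing Dirichlet and weighted Neumann data force the leading terms in the asymptotic expansion $\tilde u(t,x',x_{n+1})\sim\sum_{j\geq 0}(a_j(t,x')x_{n+1}^{2j}+b_j(t,x')x_{n+1}^{2j+2s})$ to have vanishing coefficients; iterating the regularity estimate inductively in $j$ (differentiating the equation, using the vanishing of the previously-constructed coefficients, and reapplying Lemma~\ref{lem:reg_up_to_boundary}) kills every coefficient, which is precisely $\lim_{x_{n+1}\to 0}x_{n+1}^{-m}\tilde u(t,x',x_{n+1})=0$ for all $m$.

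Alternatively, and perhaps more self-containedly, one can avoid the full asymptotic expansion by a direct induction: writing the equation as $x_{n+1}\p_{n+1}^2\tilde u+(1-2s)\p_{n+1}\tilde u=x_{n+1}(\p_t\tilde u-\Delta_{x'}\tilde u)$, one shows inductively that $x_{n+1}^{1-2s}\p_{n+1}(x_{n+1}^{2s-1}\cdots)$-type combinations of $\tilde u$ are controlled by lower-order data that already vanish, using the Caccioppoli inequality of Lemma~\ref{lem:Cacc} at each stage to upgrade $L^2$ control to pointwise control via Lemma~\ref{lem:reg_up_to_boundary}. Either way, the restriction to $\mathcal{U}'\Subset\mathcal{U}$ and to $(\delta,1-\delta)$ is forced because the Schauder estimates of Lemma~\ref{lem:reg_up_to_boundary} are interior (in $t$ and $x'$) estimates.

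The main obstacle I anticipate is the bookkeeping of the induction: one must carefully track that after $j$ steps the constructed ``coefficients'' $a_0,b_0,\dots,a_{j-1},b_{j-1}$ all vanish and that the remainder still solves an equation to which Lemma~\ref{lem:reg_up_to_boundary} applies (with the same $V\equiv 0$, or with an explicitly computable regular right-hand side built from lower-order terms). The two exponent families $x_{n+1}^{2j}$ and $x_{n+1}^{2j+2s}$ must be handled simultaneously, and one has to make sure no resonance between them (which can only occur if $s$ is rational of a special form, and in any case only produces extra logarithmic terms that still vanish to the relevant order) spoils the decay. Modulo this, the argument is a routine — if lengthy — parabolic analogue of the boundary unique continuation bootstrap used in the elliptic fractional Calder\'on problem, and I would cite \cite{GFRue19, BG2017} for the analogous elliptic computations.
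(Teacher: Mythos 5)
Your strategy is the right one and, in its second (``direct induction'') variant, essentially coincides with the paper's proof: the paper also starts from the observation that $u\equiv 0$ and $\LL^s u\equiv 0$ on $(0,1)\times\mathcal U$ give vanishing Dirichlet and vanishing weighted Neumann data for $\tilde u$ on $(0,1)\times\mathcal U\times\{0\}$, invokes Lemma \ref{lem:reg_up_to_boundary} (with $V\equiv 0$, plus difference quotients in $t,x'$) for regularity up to the boundary, and then runs an induction in the order of vanishing. The differences are worth noting. First, your preferred route via a conormal/polyhomogeneous expansion $\tilde u\sim\sum_j\bigl(a_j x_{n+1}^{2j}+b_j x_{n+1}^{2j+2s}\bigr)$ is asserted rather than proved: none of the cited results (Lemma \ref{lem:reg_up_to_boundary}, \cite{BG2017}, \cite{GFRue19}) supplies such an expansion for the degenerate parabolic extension, so as written this branch of the argument has a gap; it is your fallback induction that carries the proof. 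Second, the paper's induction has a concrete device that your sketch should make explicit: after a first step giving $|\tilde u|\lesssim x_{n+1}^{2s}$ and $|x_{n+1}^{1-2s}\p_t\tilde u|+|x_{n+1}^{1-2s}\D'\tilde u|\lesssim x_{n+1}$ (via the fundamental theorem of calculus and the boundary regularity), one differentiates the bulk equation in $x_{n+1}$ and observes that $\tilde v:=x_{n+1}^{1-2s}\p_{n+1}\tilde u$ solves a \emph{uniformly parabolic} forced heat equation $-\p_t\tilde v+\D\tilde v=f_1$ with homogeneous Neumann data and $f_1=-(1-2s)x_{n+1}^{-2s}\D'\tilde u+(1-2s)x_{n+1}^{-2s}\p_t\tilde u$ already vanishing to order two; classical Schauder theory for the heat equation plus the fundamental theorem of calculus then upgrade the decay of $\tilde u$ by two powers of $x_{n+1}$ per step, and iterating (while shrinking $(\delta_\ell,1-\delta_\ell)\times\mathcal U_\ell'$ to $(\delta,1-\delta)\times\mathcal U'$) gives $|\tilde u|\le C_m x_{n+1}^m$ for every $m$. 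This reduction to a non-degenerate equation is what replaces both your asymptotic expansion and the resonance discussion (which is moot here), and it does not require the Caccioppoli inequality of Lemma \ref{lem:Cacc} at this stage --- that enters only later, in the Carleman-estimate part of the unique continuation argument.
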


The argument for this relies on the Schauder estimates from Lemma \ref{lem:reg_up_to_boundary} and a bootstrap argument.

\begin{proof}[Proof of Lemma \ref{lem:inf_van_order}]
Relying on the regularity estimates from Lemma \ref{lem:reg_up_to_boundary} and invoking a bootstrap argument as in \cite{ruland2015unique}, it is possible to prove that $\tilde{u}$ vanishes of infinite order at $(0,1) \times \mathcal{U} \times \{0\}$. We note that using difference quotient arguments, it is always possible to boostrap the tangential and temporal regularity of $\tilde{u}$ (see the second part of Lemma \ref{lem:reg_up_to_boundary}); in the sequel, we will make extensive use of this.\\
We discuss the details of this in the sequel. 
\medskip

\emph{Step 1: Initial regularity.}
First, by the fundamental theorem of calculus and by Lemma \ref{lem:reg_up_to_boundary}, we have that for some constant $C_{s,n}>0$
\begin{align*}
|\tilde{u}(t,x',x_{n+1})| 
&= \left| \int\limits_{0}^1 \p_{n+1}\tilde{u}(t,x', r x_{n+1}) dr \right| |x_{n+1}|\\
&\leq \sup\limits_{r\in (0,1)}\left(| r x_{n+1}|^{1-2s}\left|  \p_{n+1} \tilde{u}(t,x', r x_{n+1}) \right|\right) |x_{n+1}|^{2s}\int\limits_{0}^1 r^{2s-1}dr\\
&\leq C_{s,n}|x_{n+1}|^{2s}.
\end{align*}
As an immediate consequence of the translation invariance in the tangential $x'$ and the $t$ directions, we also directly obtain that similar estimates hold for the tangential and temporal derivatives in a slightly smaller space-time domain, i.e. we have that
\begin{align}
\label{eq:deriv_tang}
|x_{n+1}^{1-2s}\p_t \tilde{u}(t,x',x_{n+1})| + |x_{n+1}^{1-2s}\D' \tilde{u}(t,x',x_{n+1})| \leq C |x_{n+1}|,
\end{align}
for $(t,x')\in (\delta_1,1-\delta_1)\times \mathcal{U}_1'$, where $\delta_1 >0$ and $\mathcal{U}'_1 \subset \mathcal{U}'$.

Next, by the equation in the bulk and the tangential and temporal regularity of the solutions, we have for $x_{n+1}>0$
\begin{align*}
\p_{n+1} x_{n+1}^{1-2s}\p_{n+1} \tilde{u}
= -  x_{n+1}^{1-2s} \D' \tilde{u} + x_{n+1}^{1-2s} \p_{t} \tilde{u},
\end{align*}
whence, in combination with \eqref{eq:deriv_tang}, we infer
\begin{align}
\label{eq:double_normal}
|\p_{n+1} x_{n+1}^{1-2s} \p_{n+1} \tilde{u}(t,x',x_{n+1})|
\leq C |x_{n+1}|.
\end{align}
In particular, we may pass to the limit $x_{n+1}\rightarrow 0$ which implies that $\lim\limits_{x_{n+1}\rightarrow 0} \p_{n+1} x_{n+1}^{1-2s} \p_{n+1} \tilde{u}$ exists and $\lim\limits_{x_{n+1}\rightarrow 0} \p_{n+1} x_{n+1}^{1-2s} \p_{n+1} \tilde{u}=0$ (see also \cite[Appendix A]{KRS16} for weighted regularity estimates up to the boundary). 
Hence, combining \eqref{eq:double_normal} with a similar fundamental theorem argument as above and exploiting the regularity of $x_{n+1}^{1-2s}\p_{n+1}\tilde{u}$ as well as our remark on the tangential and temporal boostrap arguments yields that for $(t,x') \in (\delta_2,1-\delta_2) \times \mathcal{U}_2'$ (with $\delta_2 >\delta_1$ and $\mathcal{U}_2' \subset \mathcal{U}'_1$ to be specified) there exists a constant $C=C(n,s,\delta_1,\delta_2)>0$ such that 
\begin{itemize}
\item[(i)] $|\p_{n+1}\tilde{u}(t,x',x_{n+1})| \leq C |x_{n+1}|^{2s+1}$.
\item[(ii)] for $\tilde{v}(t,x',x_{n+1}):= x_{n+1}^{1-2s}\p_{n+1} \tilde{u}(t,x',x_{n+1})$ we have $|\tilde{v}(t,x',x_{n+1})| \leq C |x_{n+1}|^2$,
\item[(iii)] $|\p_{n+1}\tilde{v}(t,x',x_{n+1})| \leq C |x_{n+1}|$,
\item[(iv)] $|\tilde{u}(t,x',x_{n+1})| \leq C|x_{n+1}|^{2+2s}$,
\item[(v)] $|x_{n+1}^{-2s} \D' \tilde{u}(t,x',x_{n+1})| + |x_{n+1}^{-2s} \p_t \tilde{u}(t,x',x_{n+1})| \leq C|x_{n+1}|^2$.
\end{itemize}

\medskip
\emph{Step 2: Upgrade of the decay estimates through upgraded regularity estimates.}
With the estimates from Step 1 in hand, we seek to upgrade the regularity estimates by reducing the problem to a forced heat equation. Indeed, we note that for $x_{n+1}>0$ (where the equation is strictly parabolic and hence $\tilde{u}(t,x
',x_{n+1})$ is smooth), a differentiation with respect to the $x_{n+1}$-direction leads to the bulk equation
\begin{align*}
\begin{split}
\D(x_{n+1}^{1-2s} \p_{n+1}\tilde{u}) -  \p_t(x_{n+1}^{1-2s} \p_{n+1}\tilde{u}) & = -(1-2s)x_{n+1}^{-2s} \D' \tilde{u} + (1-2s)x_{n+1}^{-2s}\p_t \tilde{u}.
\end{split}
\end{align*}
We note that by the estimates from Step 1 the contributions on the right hand side are Hölder continuous and vanish as $x_{n+1}\rightarrow 0$. Furthermore, also by Step 1 (see \eqref{eq:double_normal}), we have that 
\begin{align*}
\lim\limits_{x_{n+1}\rightarrow 0} \p_{n+1} x_{n+1}^{1-2s} \p_{n+1} \tilde{u}=0.
\end{align*}
As a consequence, the function $\tilde{v}(t,x',x_{n+1}):= x_{n+1}^{1-2s} \p_{n+1} \tilde{u}(t,x',x_{n+1})$ is a (weak) solution to 
\begin{align}
\label{eq:equation_heat}
\begin{cases}
- \p_t \tilde{v} + \D \tilde{v}  = f_1& \mbox{ in }(\delta_2,1-\delta_2)\times \mathcal{U}'_2 \times (0,1/2),\\
\lim\limits_{x_{n+1}\rightarrow 0} \p_{n+1} \tilde{v}  = 0 & \mbox{ on } (\delta_2,1-\delta_2)\times \mathcal{U}'_2 \times\{0\},
\end{cases}
\end{align}
where
\begin{align*}
f_1= -(1-2s) x_{n+1}^{-2s} \D' \tilde{u} + (1-2s) x_{n+1}^{-2s} \p_t \tilde{u} .
\end{align*}
Using property (v) from Step 1 as well as Schauder theory for the heat equation, we obtain that $\tilde{v}\in C^2((\delta_3, 1-\delta_3) \times \mathcal{U}'_3 \times (0,1/2-\delta_3))$ (where $\delta_3 \geq \delta_2$ and $\tilde{U}'_3 \subset \tilde{U}_2'$ is to be determined).
 By virtue of the equation \eqref{eq:equation_heat} in combination with property (v) from Step 1, we further obtain the pointwise bound
\begin{align*}
|\p_{n+1}^2 \tilde{v}(t,x',x_{n+1})| \leq |f_1(t,x', x_{n+1})| 
\leq C_{s,n,\delta_2}|x_{n+1}|^2.
\end{align*}
Bootstrapping by means of the fundamental theorem and by recalling that analogous estimates can be obtained for the tangential spatial and temporal derivatives, we obtain that in $(\delta_3,1-\delta_3) \times \mathcal{U}'_3 \times ({0},1/2-\delta_3)$ and with $C=C(n,s,\delta_1,\delta_2,\delta_3)>0$
\begin{itemize}
\item[(a)] $|\p_{n+1}\tilde{v}(t,x',x_{n+1})| \leq C |x_{n+1}|^3$,
\item[(b)] $|\tilde{u}(t,x',x_{n+1})| \leq C |x_{n+1}|^{4+2s}$,
\item[(c)] $|\p_t \tilde{u}(t,x',x_{n+1})| + |\D' \tilde{u}(t,x',x_{n+1})| \leq C |x_{n+1}|\leq C|x_{n+1}|^{4+2s}$. 
\end{itemize}

Exploiting this, we can again differentiate the equation in the normal direction and bootstrap the argument correspondingly. Iterating this procedure, and choosing $\delta_{\ell} \rightarrow \delta$, $\mathcal{U}'_{\ell} \rightarrow \mathcal{U}'$ as $\ell \rightarrow \infty$, it ultimately implies the estimate
\begin{align*}
|\tilde{u}(t,x',x_{n+1})| \leq C_{m,s,n} |x_{n+1}|^m,
\end{align*}
for all $m \in \N$ and $(t,x',x_{n+1}) \in (\delta,1-\delta)\times \mathcal{U}' \times \{0\}$, and for some constant $C_{m,s,n}>0$. This however yields the desired infinite order of vanishing of $\tilde{u}$ in $(\delta,1-\delta)\times \mathcal{U}'  \times \{0\}$.
\end{proof}

With the vanishing of infinite order in hand, we next seek to prove that $\tilde{u} = 0$ in the upper half plane. To this end, we rely on a Carleman estimate which we deduce in the next section. Exploiting this, we will be able to exclude non-trivial behaviour of $\tilde{u}$ as $x_{n+1}\rightarrow 0$ and thus prove the desired (weak) boundary unique continuation result.

\subsection{A Carleman estimate for a fractional heat operator}
In this section we seek to deduce a Carleman estimate for the operator 
\begin{align}
\label{eq:heat_back}
x_{n+1}^{1-2s} \p_t + \nabla \cdot x_{n+1}^{1-2s} \nabla
\end{align}
with vanishing weighted Neumann data.
For convenience, we have here reversed the time direction. The proof of the Carleman estimate proceeds in two steps: First, we introduce suitable parabolic conformal coordinates. Then we carry out the conjugation argument yielding the desired Carleman estimates.

\subsubsection{Parabolic conformal coordinates}

In order to simplify the derivation of the estimate and to clarify the choice of the Carleman weight, we introduce parabolic conformal coordinates (see also \cite{KT09}):
\begin{align*}
t = e^{- 4 \ell}, \ x = 2 e^{- 2 \ell } y.
\end{align*}
A short computation then yields that 
\begin{align*}
\begin{pmatrix}
\frac{\p}{\p t} \\ \frac{\p}{\p x}
\end{pmatrix}
= \begin{pmatrix}
- \frac{1}{4} e^{4 \ell} & - \frac{1}{2} e^{4 \ell} y \\
0 & \frac{1}{2} e^{2 \ell}
\end{pmatrix}
\begin{pmatrix}	
\frac{\p}{\p \ell} \\
\frac{\p}{\p y}
\end{pmatrix}.
\end{align*}
Hence, the operator \eqref{eq:heat_back} transforms into 
\begin{align*}
\frac{(2 e^{- 2\ell})^{1-2s} e^{4 \ell}}{4}\left[ y_{n+1}^{1-2s} \left(
- \p_{\ell}- 2 y \cdot \nabla_{y} \right)
+ \nabla_{y} \cdot y_{n+1}^{1-2s} \nabla_y
 \right].
\end{align*}
Multiplying this with $4 e^{- 4 \ell} (2 e^{-2 \ell})^{2s-1}$ therefore leads to the operator
\begin{align}\label{UCP cal 1}
y_{n+1}^{1-2s} \left(
- \p_{\ell}- 2 y \cdot \nabla_{y} \right)
+ \nabla_{y} \cdot y_{n+1}^{1-2s} \nabla_{y} . 
\end{align}
Conjugating \eqref{UCP cal 1} by $e^{- \frac{|y|^2}{2}}$ then further results in 
\begin{align}\label{UCP cal 2}
&\notag e^{- \frac{|y|^2}{2}}
\left[ y_{n+1}^{1-2s} \left(
- \p_{\ell}- 2 y \cdot \nabla_{y} \right)
+ \nabla_{y} \cdot y_{n+1}^{1-2s} \nabla_{y} \right] e^{\frac{|y|^2}{2}}\\
= & y_{n+1}^{1-2s}\left( - \p_{\ell} - |y|^2 \right) + \nabla_y \cdot y_{n+1}^{1-2s} \nabla_y + (n+2-2s) y_{n+1}^{1-2s}.
\end{align}

In order to eliminate the zeroth order term of \eqref{UCP cal 2}, we conjugate \eqref{UCP cal 2} with $e^{-(n+2-2s)\ell}$, which gives
\begin{align}\label{UCP cal 3}
e^{-(n+2-2s)\ell}\left( y_{n+1}^{1-2s}\left( - \p_{\ell} -  |y|^2 \right) + \nabla_y \cdot y_{n+1}^{1-2s} \nabla_y  \right) e^{(n+2-2s) \ell }.
\end{align}
Finally, we multiply the operator \eqref{UCP cal 3} from the left and right by $y_{n+1}^{\frac{2s-1}{2}}$ and obtain
\begin{align*}
L:= - \p_{\ell} + y_{n+1}^{\frac{2s-1}{2}} \nabla_y \cdot y_{n+1}^{1-2s} \nabla_y y_{n+1}^{\frac{2s-1}{2}} - |y|^2 =: -\p_{\ell} - H_s,
\end{align*}
where in analogy to the case $s=\frac{1}{2}$, we refer to $H_s$ as the fractional Hermite operator.

We summarize this discussion in the following lemma:

\begin{lemma}
\label{lem:conformal_polar_coord}
Let $f: \R \times \R^{n+1}_+ \rightarrow \R$ and consider a function $u:\R \times \R^{n+1}_+  \rightarrow \R$. Then $u(t,x)$ is a solution to 
\begin{align*}
\left(x_{n+1}^{1-2s}\p_t + \nabla \cdot x_{n+1}^{1-2s} \nabla \right) u(t,x) = f(t,x) \mbox{ in } \R \times \R^{n+1}_+,
\end{align*} 
if and only if the function $w( y, \ell):= y_{n+1}^{\frac{1-2s}{2}} e^{-(n+2-2s)\ell}e^{-{|y|^2\over 2}}u(e^{- 4\ell},2 e^{- 2\ell} y)$ is a solution to
\begin{align*}
(\p_{\ell} + H_{s}) w(\ell,y) & = g(\ell,y) \mbox{ in } \R \times \R^{n+1}_+,
\end{align*}
where $g(\ell, y) = 4 e^{-4 \ell} (2 e^{-2\ell})^{2s-1}e^{- \frac{|y|^2}{2}} e^{-(n+2-2s)\ell} y_{n+1}^{\frac{2s-1}{2}} f(e^{-4 \ell}, 2 e^{-2\ell}y)$. 
\end{lemma}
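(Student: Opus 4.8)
The plan is to verify Lemma~\ref{lem:conformal_polar_coord} by carrying out, in reverse, the sequence of transformations (change of variables, multiplicative conjugations, symmetrization) that was already assembled in the computations \eqref{UCP cal 1}--\eqref{UCP cal 3}. The statement is essentially a bookkeeping assertion: each step \eqref{UCP cal 1}--\eqref{UCP cal 3} records how the operator $x_{n+1}^{1-2s}\p_t + \nabla\cdot x_{n+1}^{1-2s}\nabla$ is modified under the corresponding operation, and the lemma packages the net effect on the \emph{functions} rather than on the operators. So the main work is to track how the source term $f$ is transformed alongside the operator, and to confirm the overall multiplicative prefactor.

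First I would fix the substitution $t = e^{-4\ell}$, $x = 2e^{-2\ell}y$ (with $y = (y',y_{n+1})$ and $y_{n+1}>0$ corresponding to $x_{n+1}>0$), and observe that this is a diffeomorphism of $\R_{>0}\times\R^{n+1}_+$ onto $\R\times\R^{n+1}_+$; the Carleman estimate is then stated on this image. Using the Jacobian computed in the excerpt, the operator $x_{n+1}^{1-2s}\p_t + \nabla\cdot x_{n+1}^{1-2s}\nabla$ applied to $u(t,x)$ becomes, after expressing everything in $(\ell,y)$,
\[
\frac{(2e^{-2\ell})^{1-2s} e^{4\ell}}{4}\Bigl[ y_{n+1}^{1-2s}\bigl(-\p_\ell - 2y\cdot\nabla_y\bigr) + \nabla_y\cdot y_{n+1}^{1-2s}\nabla_y\Bigr] \widetilde u,
\]
where $\widetilde u(\ell,y) = u(e^{-4\ell}, 2e^{-2\ell}y)$. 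Multiplying the equation $(\,\cdots\,)u = f$ through by $4e^{-4\ell}(2e^{-2\ell})^{2s-1}$ therefore turns it into the operator \eqref{UCP cal 1} acting on $\widetilde u$, with right-hand side $4e^{-4\ell}(2e^{-2\ell})^{2s-1} f(e^{-4\ell},2e^{-2\ell}y)$. Next I would perform the three successive conjugations exactly as in \eqref{UCP cal 2} and \eqref{UCP cal 3}: setting $v := e^{-|y|^2/2}\,\widetilde u$ kills the drift term $-2y\cdot\nabla_y$ and produces the $-|y|^2$ potential together with the zeroth order term $(n+2-2s)y_{n+1}^{1-2s}$; then setting $v_1 := e^{-(n+2-2s)\ell} v$ removes that zeroth order term (note the sign: $-\p_\ell$ acting on $e^{(n+2-2s)\ell}$ produces $-(n+2-2s)$, cancelling the $+(n+2-2s)$); finally the symmetric multiplication by $y_{n+1}^{(2s-1)/2}$ on both sides, i.e. setting $w := y_{n+1}^{(1-2s)/2} v_1$, conjugates $\nabla_y\cdot y_{n+1}^{1-2s}\nabla_y$ into $y_{n+1}^{(2s-1)/2}\nabla_y\cdot y_{n+1}^{1-2s}\nabla_y\, y_{n+1}^{(2s-1)/2} = H_s + |y|^2$ and leaves $-\p_\ell$ unchanged. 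Composing these substitutions gives precisely $w(\ell,y) = y_{n+1}^{(1-2s)/2} e^{-(n+2-2s)\ell} e^{-|y|^2/2} u(e^{-4\ell},2e^{-2\ell}y)$, and the equation becomes $(-\p_\ell - H_s)w = -g$ with $g$ as stated, i.e. $(\p_\ell + H_s)w = g$ after a sign flip; here $g$ picks up exactly the same multiplicative factors $4e^{-4\ell}(2e^{-2\ell})^{2s-1}$, $e^{-|y|^2/2}$, $e^{-(n+2-2s)\ell}$, $y_{n+1}^{(2s-1)/2}$ that were applied to the operator, because each conjugation multiplies both sides of the equation by the same function.

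The reverse implication is immediate: every step is an invertible multiplication by a nonvanishing function (for $y_{n+1}>0$) composed with a diffeomorphism of the domain, so the chain of equivalences can be read from bottom to top. I would also note that, since each transformation is smooth and nondegenerate in the interior $\{y_{n+1}>0\}$, weak solutions are carried to weak solutions and the (weighted Neumann) boundary behaviour at $\{y_{n+1}=0\}$ is preserved, which is what is needed for the subsequent Carleman argument. The only mildly delicate points are purely computational: getting the signs right in the two $\ell$-conjugations (since we work with $-\p_\ell$) and verifying that $e^{-|y|^2/2}(-2y\cdot\nabla_y)e^{|y|^2/2}$ combined with $e^{-|y|^2/2}\nabla_y\cdot y_{n+1}^{1-2s}\nabla_y e^{|y|^2/2}$ produces exactly the potential $-|y|^2$ and the stated zeroth order term — this is the identity $\nabla\cdot\nabla e^{|y|^2/2} = (|y|^2 + (n+1))e^{|y|^2/2}$ in $n+1$ variables, adjusted for the weight $y_{n+1}^{1-2s}$, which contributes the $(2s-1)$-shift turning $n+1$ into $n+2-2s$. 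None of this is a genuine obstacle; the lemma is a normalization step, and the real difficulty is deferred to the Carleman estimate for $\p_\ell + H_s$ in Proposition~\ref{prop:Carl_eucl}.
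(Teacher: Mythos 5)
Your proposal is correct and follows essentially the same route as the paper, whose ``proof'' of this lemma is exactly the chain of transformations \eqref{UCP cal 1}--\eqref{UCP cal 3} preceding it, which you reproduce while additionally tracking the source term through the same multiplications. Only minor sign bookkeeping is off in your intermediate claims: with the paper's convention $H_s = |y|^2 - y_{n+1}^{\frac{2s-1}{2}}\nabla_y\cdot y_{n+1}^{1-2s}\nabla_y\, y_{n+1}^{\frac{2s-1}{2}}$ the symmetric conjugation gives $-H_s + |y|^2$ (not $H_s+|y|^2$), and since every conjugation multiplies both sides of the equation by the same positive factor the transformed equation is $(-\p_{\ell}-H_s)w = g$, i.e.\ $(\p_{\ell}+H_s)w = -g$ rather than the $-g$ you interpolate --- a discrepancy shared by the paper's own statement of the lemma and immaterial for its use, since the Carleman argument applies it with $f\equiv 0$.
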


With this in hand, we deduce a Carleman estimate in conformal polar coordinates:

\begin{proposition}
\label{prop:Carl_conf}
Let $h: \R \rightarrow \R, \ \ell \mapsto h(\ell)$ be a convex, asymptotically linearly growing function. Assume further that $w \in L^2(\R\times\R^{n+1}_+)\cap {C^{\infty}_{loc}(\R \times \R^{n+1}_+)}$ is decaying superlinearly as $|\ell | \rightarrow  \infty$ and $|y| \rightarrow \infty$ and satisfies
\begin{align*}
\begin{cases}
\left(\p_{\ell}-y_{n+1}^{\frac{2s-1}{2}} \nabla_y \cdot y_{n+1}^{1-2s} \nabla_y y_{n+1}^{\frac{2s-1}{2}} + |y|^2\right) w = {f}(\ell,y) & \mbox{ in } \R \times \R^{n+1}_+,\\
\lim\limits_{y_{n+1}\rightarrow 0} y_{n+1}^{1-2s} \p_{n+1} (y_{n+1}^{\frac{2s-1}{2}} w) = 0 & \mbox{ on } \R \times \R^{n} \times \{0\},
\end{cases}
\end{align*}
where $f(\ell,y) \in L^2(\R \times \R^{n+1}_+)$ has superlinear decay as $|\ell| \rightarrow \infty$ and $|y| \rightarrow \infty$.
Then, there exists $C>0$ such that for all $\tau \geq \tau_0>0$
\begin{align*}
\tau \|e^{\tau h} (h'')^{\frac{1}{2}} w\|_{L^2(\R \times \R^{n+1}_+ )}^2
\leq C\|e^{\tau h}(\p_{\ell}+H_s) w\|_{L^2(\R \times \R^{n+1}_+)}^2.
\end{align*}
\end{proposition}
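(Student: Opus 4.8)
The plan is to establish this Carleman estimate by the classical conjugation/commutator method, exploiting the fact that in the conformal polar coordinates the operator has been reduced to a parabolic operator $\p_\ell + H_s$ with a self-adjoint (with respect to the natural weighted inner product) spatial part $H_s$. First I would set $v := e^{\tau h(\ell)} w$ and compute the conjugated operator
\[
L_\tau v := e^{\tau h}(\p_\ell + H_s)(e^{-\tau h} v) = \p_\ell v - \tau h' v + H_s v.
\]
The key structural observation is that the symmetric part (in the weighted $L^2$ inner product, integrating the boundary term away using the weighted Neumann condition $\lim_{y_{n+1}\to 0} y_{n+1}^{1-2s}\p_{n+1}(y_{n+1}^{\frac{2s-1}{2}} w)=0$) of $L_\tau$ is $S := H_s - \tau h'$, while the antisymmetric part is $A := \p_\ell$. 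Then one expands
\[
\|L_\tau v\|_{L^2}^2 = \|S v\|_{L^2}^2 + \|A v\|_{L^2}^2 + \langle [S,A] v, v\rangle,
\]
and the heart of the matter is the commutator term. Since $H_s$ is independent of $\ell$, one has $[S,A] = [{-\tau h'}, \p_\ell] = \tau h'' \,\mathrm{Id}$, which is precisely the positive term we want: $\langle [S,A]v,v\rangle = \tau \int h'' |v|^2 = \tau \|(h'')^{1/2} e^{\tau h} w\|_{L^2}^2$. This is why the convexity hypothesis on $h$ and the appearance of $(h'')^{1/2}$ in the statement are natural.

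The remaining work is to make this integration-by-parts rigorous and to control the boundary terms and the decay at infinity. Concretely, I would proceed in the following steps. (i) Justify all integrations by parts in $\ell$, $y'$ and $y_{n+1}$: the superlinear decay of $w$ and $f$ as $|\ell|\to\infty$ and $|y|\to\infty$, together with the $C^\infty_{loc}$ regularity, kills the terms at $|\ell|=\infty$ and $|y'|=\infty$; the weighted Neumann condition kills the term at $y_{n+1}=0$ (here one uses that $H_s = y_{n+1}^{\frac{2s-1}{2}}\nabla_y\cdot y_{n+1}^{1-2s}\nabla_y\, y_{n+1}^{\frac{2s-1}{2}} - |y|^2$, so that in integrating $\langle H_s v, \p_\ell v\rangle$ by parts in $y_{n+1}$ the boundary contribution is exactly $\lim_{y_{n+1}\to 0} y_{n+1}^{1-2s}\p_{n+1}(y_{n+1}^{\frac{2s-1}{2}} v)\cdot y_{n+1}^{\frac{2s-1}{2}}\p_\ell v$, which vanishes). (ii) Observe that $H_s$ is $\ell$-independent, so $[H_s, \p_\ell] = 0$ and the only surviving commutator is $\tau h''$. (iii) Discard the two nonnegative squares $\|Sv\|^2$ and $\|Av\|^2$ to obtain
\[
\tau\int_{\R\times\R^{n+1}_+} h'' |v|^2 \, dy\,d\ell \leq \|L_\tau v\|_{L^2(\R\times\R^{n+1}_+)}^2,
\]
and undo the substitution $v = e^{\tau h} w$ and recall $L_\tau v = e^{\tau h}(\p_\ell + H_s) w$ to get exactly the claimed inequality, with the restriction $\tau \geq \tau_0 > 0$ only needed to absorb lower-order errors if any arise (for instance if one wishes to also retain a gradient term on the left, or if the cross terms from a weighted version of the inner product are not exactly zero).

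I expect the main obstacle to be the rigorous handling of the degenerate weight $y_{n+1}^{1-2s}$ near $\{y_{n+1}=0\}$ in the integrations by parts: one must argue that the conjugation by $y_{n+1}^{\frac{2s-1}{2}}$ turns the weighted Neumann condition into a clean boundary condition for which the relevant boundary integrals genuinely vanish (rather than merely being finite), and that all the manipulations are legitimate on the class of functions under consideration (one may first prove the estimate for $w \in C^\infty_c(\R\times\overline{\R^{n+1}_+})$ respecting the boundary condition, or with a cutoff and limiting argument using the decay hypotheses, and then extend by density). A secondary technical point is confirming that the symmetric/antisymmetric splitting is with respect to the correct inner product — here the plain $L^2(\R\times\R^{n+1}_+, dy\,d\ell)$ inner product works after the $y_{n+1}^{\frac{2s-1}{2}}$-conjugation, which is precisely why the authors performed that final conjugation in deriving the operator $L = -\p_\ell - H_s$. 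Once the boundary terms are shown to vanish, the commutator identity is algebraically immediate and the estimate follows.
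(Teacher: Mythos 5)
Your proposal is correct and follows essentially the same route as the paper: conjugate by $e^{\tau h}$, split $L_h=\p_\ell+H_s-\tau h'$ into the symmetric part $S=H_s-\tau h'$ and antisymmetric part $A=\p_\ell$, expand $\|L_h v\|^2=\|Sv\|^2+\|Av\|^2+([S,A]v,v)+\mbox{(BT)}$, use the $\ell$-independence of $H_s$ so that the commutator reduces to $\tau h''$, and kill the boundary terms at $y_{n+1}=0$ via the weighted Neumann condition (the paper invokes the a priori regularity of Lemma \ref{lem:reg_up_to_boundary} and the decay hypotheses to justify this, exactly as you anticipate). Dropping the two squares and substituting $v=e^{\tau h}w$ then gives the estimate, as in the paper's proof.
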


\begin{proof}
This follows from a conjugation argument. Indeed, we have
\begin{align*}
L_h:= e^{\tau h(\ell)}(\p_{\ell} + H_s ) e^{-\tau h(\ell)}
= \p_{\ell} + H_s - \tau h'.
\end{align*}
Then, up to boundary terms this entails that the symmetric and antisymmetric parts of this operator are given by
\begin{align*}
S= H_s - \tau h', \qquad A = \p_{\ell}.
\end{align*}
As a consequence, we obtain that 
\begin{align}
\label{eq:expand_a}
\begin{split}
\|L_h v\|_{L^2(\R \times \R^{n+1}_+)}^2
& = \|S v\|_{L^2(\R \times \R^{n+1}_+)}^2 + \|A v\|_{L^2(\R \times \R^{n+1}_+)}^2 + 2(S v, Av)\\
 & = \|S v\|_{L^2(\R \times \R^{n+1}_+)}^2 + \|A v\|_{L^2(\R \times \R^{n+1}_+)}^2 + ([S,A] v, v) + \text{(BT)},
\end{split}
\end{align}
where $[S,A]:=SA-AS$ denotes the commutator and (BT) are boundary correction terms. These are obtained as boundary terms in the integration by parts estimates which lead from $2(S v, Av)$ to $([S,A] v, v)$. Here only the spatial integration by parts give rise to boundary contributions. For these we note that
\begin{align*}
    \int_{\R  \times \R^{n+1}_+ } H_s v \partial_\ell v \,d\ell dy=-    \int_{\R  \times \R^{n+1}_+ } H_s v \partial_\ell v \,d\ell dy+\mbox{ (BT)},
\end{align*}
where (BT) denotes the boundary contributions from above and in particular,
\begin{align*}
\mbox{(BT)}
& = 2 \int\limits_{\R \times\R^n \times \{0\}} \left(\lim\limits_{y_{n+1}\rightarrow 0} y_{n+1}^{\frac{2s-1}{2}} \p_{\ell} v\right)  \left(\lim\limits_{y_{n+1}\rightarrow 0} y_{n+1}^{1-2s} \p_{n+1} (y_{n+1}^{\frac{2s-1}{2}}v)\right) dy' d\ell\\
& \quad -  2 \int\limits_{\R \times \R^n \times \{0\}} \left(\lim\limits_{y_{n+1}\rightarrow 0} y_{n+1}^{\frac{2s-1}{2}}  v\right) \left(\lim\limits_{y_{n+1}\rightarrow 0} y_{n+1}^{1-2s} \p_{n+1} (y_{n+1}^{\frac{2s-1}{2}}\p_{\ell} v)\right) dy' d\ell.
\end{align*}
Using the vanishing (weighted) Neumann boundary conditions together with the a priori regularity estimates from Lemma \ref{lem:reg_up_to_boundary}, we infer that the terms in (BT) vanish. 
Therefore, we also deduce that
\begin{align}\label{in carleman}
\int_{\R  \times \R^{n+1}_+ } H_s v \partial_\ell v \,d\ell dy=0.
\end{align}

Next, returning to \eqref{eq:expand_a} and inserting \eqref{in carleman} and the vanishing of the boundary data, we infer that
\begin{align*}
\begin{split}
\left\| L_h v \right\|_{L^2(\R  \times \R^{n+1}_+ )}^2
& = \|S v\|_{L^2(\R \times \R^{n+1}_+)}^2 + \|A v\|_{L^2(\R \times \R^{n+1}_+)}^2 +  \tau \int\limits_{\R\times \R^{n+1}_+ } h'' v^2 dy d\ell ,
\end{split}
\end{align*}
which implies that 
$$
\| L_h v \|_{L^2(\R  \times \R^{n+1}_+ )}^2\geq \tau \int\limits_{\R\times \R^{n+1}_+ } h'' v^2 dy d\ell.
$$
Finally, we plug $v=e^{\tau h(\ell)}w$ into the above inequality, which then yields the desired estimate. 
\end{proof}

\subsubsection{The Carleman estimate in Euclidean coordinates}

Relying on the previous discussion in parabolic conformal polar coordinates, we obtain a Carleman estimate in our original coordinates:

\begin{proposition}
\label{prop:Carl_eucl}
Let $s\in (0,1)$ and let $\tilde{u}\in \mathcal{W}([0,1]\times \overline{B_4^+})$ with $\supp(\tilde{u})\subset ((0,1)\times B_{4}^+) \setminus (0,0)$ be a weak solution to 
\begin{align*}
\begin{cases}
\left( x_{n+1}^{1-2s} \p_t + \nabla \cdot x_{n+1}^{1-2s} \nabla \right) \tilde{u}  = f & \mbox{ in } (0,1)\times B_4^+ , \\
\lim\limits_{x_{n+1} \rightarrow 0} x_{n+1}^{1-2s} \p_{n+1} \tilde{u} = 0 & \mbox{ on } (0,1)\times B_4',
\end{cases}
\end{align*}
where $f \in L^2([0,1]\times B_4^+, x_{n+1}^{1-2s})$.
Assume further that 
\begin{align*}
\phi(t,x):= - \frac{|x|^2}{8 t} + \tau h\left(- \frac{1}{4} \ln(t) \right)  ,
\end{align*}
with a convex function $h(\ell)$ which grows asymptotically linearly as $\ell \rightarrow \infty$. Then, there exists a constant $C>1$ such that for all $\tau \geq \tau_0>0$ we have
\begin{align*}
\tau \left\| e^{\phi} t^{ -\frac{1}{2}} (\bar{h}'')^{\frac{1}{2}} x_{n+1}^{\frac{1-2s}{2}} \tilde{u} \right\|_{L^2((0,\infty)\times \R^{n+1}_+)}^2
\leq C \left\| e^{\phi} t^{ \frac{1}{2}} x_{n+1}^{\frac{2s-1}{2}} f \right\|_{L^2((0,\infty) \times \R^{n+1}_+)}^2,
\end{align*}
for some constant $C>0$ independent of $\tilde u $ and $f$, where $\bar{h}''(t):=h''(r)|_{r=-{1\over 4}\ln(t)}$.
\end{proposition}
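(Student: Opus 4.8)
The plan is to derive this Euclidean Carleman estimate from the conformal-coordinate version, Proposition \ref{prop:Carl_conf}, by means of the change of variables recorded in Lemma \ref{lem:conformal_polar_coord}. That is, one transports $\tilde u$ and $f$ to the variables $(\ell,y)$, where $t=e^{-4\ell}$ and $x=2e^{-2\ell}y$, applies the abstract weighted estimate there, and then pulls the inequality back to $(t,x)$, bookkeeping the Jacobian and all conjugation factors.

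Concretely, I would first introduce, following Lemma \ref{lem:conformal_polar_coord},
\[
w(\ell,y):=y_{n+1}^{\frac{1-2s}{2}}e^{-(n+2-2s)\ell}e^{-|y|^2/2}\,\tilde u(e^{-4\ell},2e^{-2\ell}y),
\]
so that $(\p_\ell+H_s)w=g$ holds in $\R\times\R^{n+1}_+$ with $g$ the transform of $f$ prescribed in that lemma. The next step is to verify that $w$ and $g$ lie in the class required by Proposition \ref{prop:Carl_conf}. The support hypothesis on $\tilde u$ (compactly contained away from the parabolic origin and from the slices $t=0,1$) forces $\ell$ and $y$ to range over a bounded set on the support of $w$, so the $L^2$-membership and the super-linear decay as $|\ell|,|y|\to\infty$ are automatic; interior parabolic regularity for the non-degenerate equation on $\{y_{n+1}>0\}$ gives $w\in C^\infty_{loc}(\R\times\R^{n+1}_+)$; and a short computation — using $\p_{y_{n+1}}\tilde u=2e^{-2\ell}\p_{x_{n+1}}\tilde u$ together with $2-2s>0$ — shows that the vanishing weighted Neumann datum $\lim_{x_{n+1}\to0}x_{n+1}^{1-2s}\p_{n+1}\tilde u=0$ transfers to $\lim_{y_{n+1}\to0}y_{n+1}^{1-2s}\p_{n+1}(y_{n+1}^{\frac{2s-1}{2}}w)=0$. (If one prefers to stay strictly within the $\W$-regularity of the hypothesis, one proves the estimate first for smooth $\tilde u$ and passes to the limit; alternatively, the weighted Schauder bounds of Lemma \ref{lem:reg_up_to_boundary} justify all the boundary terms directly.) With this in hand, Proposition \ref{prop:Carl_conf} applied with the given convex, asymptotically linear $h$ yields
\[
\tau\,\|e^{\tau h}(h'')^{1/2}w\|_{L^2(\R\times\R^{n+1}_+)}^2\le C\,\|e^{\tau h}g\|_{L^2(\R\times\R^{n+1}_+)}^2.
\]

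It remains to undo the change of variables. Using $e^{-2\ell}=t^{1/2}$, $y_{n+1}=x_{n+1}/(2t^{1/2})$, $|y|^2=|x|^2/(4t)$ and the Jacobian $d\ell\,dy=c_{n,s}\,t^{-(n+3)/2}\,dt\,dx$, I would substitute the explicit form of $w$ into $|e^{\tau h}(h'')^{1/2}w|^2\,d\ell\,dy$ and collect powers: the exponentials combine to $e^{2\tau h(\ell)}e^{-|x|^2/(4t)}=e^{2\phi(t,x)}$, the factor $(h''(\ell))^{1/2}$ becomes $(\bar h''(t))^{1/2}$, and the powers of $t$ (coming from $e^{-(n+2-2s)\ell}$, from $y_{n+1}^{(1-2s)/2}$ and from the Jacobian) collapse to $t^{-1}$, producing $c_{n,s}\,e^{2\phi}\,t^{-1}\bar h''\,x_{n+1}^{1-2s}|\tilde u|^2\,dt\,dx$. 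An entirely analogous computation with the explicit form of $g$ — now also carrying the prefactor $4e^{-4\ell}(2e^{-2\ell})^{2s-1}$ — yields $c_{n,s}'\,e^{2\phi}\,t\,x_{n+1}^{2s-1}|f|^2\,dt\,dx$. Inserting these two identities into the displayed inequality gives exactly the assertion. The only genuine obstacle here is this bookkeeping: one must track every conjugating factor and the Jacobian simultaneously so that the powers of $t$ and of $x_{n+1}$, and the weight $\phi$, emerge precisely as stated; the verification of the transferred Neumann condition and of the decay hypotheses is routine.
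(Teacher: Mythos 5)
Your proposal is correct and follows essentially the same route as the paper: the paper's own proof consists precisely of setting $w(\ell,y):= y_{n+1}^{\frac{1-2s}{2}}e^{-(n+2-2s)\ell}e^{-|y|^2/2}\tilde{u}(e^{-4\ell},2e^{-2\ell}y)$ as in Lemma \ref{lem:conformal_polar_coord}, applying Proposition \ref{prop:Carl_conf}, and transforming back to Euclidean coordinates. Your additional bookkeeping (Jacobian $t^{-(n+3)/2}$, the collapse of the $t$-powers to $t^{-1}$ and $t^{+1}$, and the transfer of the weighted Neumann condition using $2-2s>0$) checks out and simply makes explicit what the paper leaves as a direct verification.
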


\begin{proof}
This follows directly from Proposition \ref{prop:Carl_conf} by setting $$w(\ell, y):= y_{n+1}^{\frac{1-2s}{2}}e^{-(n+2-2s)\ell} e^{-\frac{|y|^2}{2}} \tilde{u}(e^{-4 \ell}, 2 e^{-2 \ell} y)$$ and transforming back from parabolic conformal coordinates to Euclidean coordinates.
\end{proof}

\subsection{Global weak unique continuation}

In this section we deduce the weak unique continuation property from the Carleman estimate from Proposition \ref{prop:Carl_eucl}. In contrast to the results in the literature on unique continuation properties for fractional parabolic equations (see \cite{BG2017}), here we do \emph{not} assume that the equation $(\p_t - \D)^s u = Vu$ holds globally.

Our main aim in this section is to prove Theorem \ref{Thm UCP} in the following form:

\begin{proposition}
\label{prop:global_UCP}
Let $s\in (0,1)$, $n\in \N$ and $u \in \mathcal{H}^s(\R^{n+1})$. Assume that for some open set $\mathcal{U} \subset \R^n $ we have 
\begin{align}
\label{eq:overdet}
u \equiv 0, \ (\p_t - \D)^s u \equiv 0 \mbox{ in } (0,1) \times \mathcal{U}.
\end{align} 
Then, $u\equiv 0$ in $(0,1)\times \R^n$.
\end{proposition}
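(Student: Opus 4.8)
The plan is to combine the Caffarelli--Silvestre extension with the Carleman estimate of Proposition~\ref{prop:Carl_eucl} in order to propagate the vanishing of $u$ from the region $(0,1)\times\mathcal U$ to the whole slab $(0,1)\times\R^n$. Let $\tilde u = E_s u$ be the extension from Proposition~\ref{prop:gen_Caff_Silv}, so that $\tilde u$ solves the degenerate parabolic equation $\bigl(x_{n+1}^{1-2s}\p_t - \nabla\cdot x_{n+1}^{1-2s}\nabla\bigr)\tilde u = 0$ in $\R\times\R^{n+1}_+$ with trace $u$ on $\{x_{n+1}=0\}$ and weighted Neumann data $\lim_{x_{n+1}\to 0}x_{n+1}^{1-2s}\p_{n+1}\tilde u = d_s\,\LL^s u$. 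The overdetermination \eqref{eq:overdet} translates into the statement that both the Dirichlet trace $\tilde u$ and the weighted Neumann trace of $\tilde u$ vanish on $(0,1)\times\mathcal U\times\{0\}$. By Lemma~\ref{lem:inf_van_order}, this forces $\tilde u$ to vanish to infinite order at every boundary point of $(\delta,1-\delta)\times\mathcal U'\times\{0\}$ for some $\delta\in(0,1)$ and some $\mathcal U'\Subset\mathcal U$.

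\textbf{Applying the Carleman estimate.} Next I would pick a boundary point, say $(t_0,x_0,0)$ with $t_0\in(\delta,1-\delta)$ and $x_0\in\mathcal U'$, and use it as the ``center'' for the Carleman estimate. After reversing time (replacing $t$ by a shifted/scaled variable so that the distinguished point becomes $t=0$, as in the setup of Proposition~\ref{prop:Carl_eucl}) and rescaling so that the relevant domain fits in $(0,1)\times B_4^+$, the extension $\tilde u$ becomes a weak solution of $\bigl(x_{n+1}^{1-2s}\p_t + \nabla\cdot x_{n+1}^{1-2s}\nabla\bigr)\tilde u = 0$ with vanishing weighted Neumann data. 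I then insert a cutoff: multiply $\tilde u$ by $\eta = \eta(t,x)$ supported in $(0,1)\times B_4^+$ and equal to $1$ on a smaller cylinder, producing an equation with right-hand side $f$ supported in the region where $\nabla\eta$ or $\p_t\eta$ is nonzero. The Caccioppoli inequality (Lemma~\ref{lem:Cacc}) controls the gradient terms in $f$ by $L^2$-norms of $\tilde u$ over an annular region. Applying Proposition~\ref{prop:Carl_eucl} to $\eta\tilde u$ and letting $\tau\to\infty$: the left-hand side, with weight $e^{\phi}$, is large near the center (where $\tilde u$ vanishes to infinite order, so the exponential blow-up of $e^{\phi}$ as the center is approached is beaten by the infinite vanishing, and the contribution is finite), while the right-hand side is supported away from the center where $\phi$ is comparatively small. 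Balancing the exponential weights forces the $L^2$-norm of $\tilde u$ over an inner cylinder to vanish, hence $\tilde u\equiv 0$ in a full neighborhood of $(t_0,x_0,0)$ in $\R\times\R^{n+1}_+$.

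\textbf{Propagation and conclusion.} Once $\tilde u$ vanishes in an open subset of $\R\times\R^{n+1}_+$, I would propagate this to all of $(0,1)\times\R^n\times\R_+$ by a standard chain-of-balls/connectedness argument using the same Carleman estimate iteratively (the degenerate parabolic equation has the weak unique continuation property in the interior as well, which can be obtained from an interior version of the Carleman estimate, or one can invoke known interior unique continuation for such weighted parabolic operators). Since the slab $(0,1)\times\R^n\times\{x_{n+1}>0\}$ is connected, $\tilde u\equiv 0$ there, and taking the trace $x_{n+1}\to 0$ via the first bound in \eqref{eq:ext_bounds} yields $u\equiv 0$ in $(0,1)\times\R^n$, which is the assertion of Proposition~\ref{prop:global_UCP} and hence of Theorem~\ref{Thm UCP}.

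\textbf{Main obstacle.} The delicate point is the quantitative balancing in the Carleman step: one must choose the convex weight $h$ and the cutoff $\eta$ so that, after sending $\tau\to\infty$, the boundary/annular error terms coming from the cutoff (controlled via Caccioppoli) are genuinely dominated by the main term. This requires that the Carleman weight $\phi(t,x) = -\tfrac{|x|^2}{8t} + \tau h(-\tfrac14\ln t)$ be strictly larger on the inner cylinder than on the support of $\nabla\eta$, uniformly in $\tau$ — which is exactly what the asymptotic linearity and convexity of $h$ provide, but checking it (and checking that the infinite-order vanishing from Lemma~\ref{lem:inf_van_order} is strong enough to absorb the singular weight near the center, including the weighted $x_{n+1}^{(1-2s)/2}$ factors) is the technical heart of the argument. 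A secondary subtlety is justifying the integration-by-parts and the vanishing of the boundary terms (BT) in Proposition~\ref{prop:Carl_conf} for the cutoff function $\eta\tilde u$ rather than a smooth compactly supported one, which is where the regularity statements of Lemma~\ref{lem:reg_up_to_boundary} are needed.
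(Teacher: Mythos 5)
Your plan reproduces the paper's argument in all essentials: pass to the Caffarelli--Silvestre extension, use Lemma \ref{lem:inf_van_order} to get infinite-order vanishing of $\tilde u$ at the overdetermined boundary portion, apply the boundary Carleman estimate of Proposition \ref{prop:Carl_eucl} to a cutoff of $\tilde u$ with the gradient error terms absorbed via the Caccioppoli inequality, and then propagate the resulting vanishing by spatial (interior) unique continuation in the half-space before taking the trace $x_{n+1}\to 0$. The only differences from the paper are bookkeeping ones (the paper runs two parameters, sending the inner cutoff scale $\delta\to 0$ at fixed $\tau$ using the infinite vanishing order, and the limit $\tau\to\infty$ yields vanishing only on the central time slice $B_{1/8}^+$, which is then translated in $t$ and $x'$), so the proposal is correct and follows essentially the same route.
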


After having established the infinite vanishing order in Lemma \ref{lem:inf_van_order}, we now address the full weak unique continuation statement for which we still have to exclude super-polynomial decay towards the boundary. At this point we exploit the Carleman estimate from Proposition \ref{prop:Carl_eucl}.

\begin{proof}[Proof of Proposition \ref{prop:global_UCP}]
\emph{Step 1: Extension.} We first note that the vanishing property \eqref{eq:overdet} from above can be viewed in terms of its Caffarelli-Silvestre extension. Formulated in terms of this, we seek to show that if $u\in \mathcal{H}^s(\R \times \R^n)$ and if $\tilde{u}$ solves
\begin{align*}
\begin{cases}
\left(x_{n+1}^{1-2s}\p_t + \nabla \cdot x_{n+1}^{1-2s} \nabla \right) \tilde{u}  = 0 & \mbox{ in } (0,1) \times \R^{n+1}_+ ,\\
\tilde{u} = u & \mbox{ on } (0,1) \times \R^n \times \{0\},
\end{cases}
\end{align*} 
such that 
\begin{align*}
\tilde{u}=0 \mbox{ and } \lim\limits_{x_{n+1}\rightarrow 0} x_{n+1}^{1-2s}\p_{n+1} \tilde{u} = 0 \mbox{ in } (0,1) \times \mathcal{U} \times \{0\},
\end{align*}
then $\tilde{u}=0$ in $(0,1)\times \R^{n+1}_+$.
Using the result of Lemma \ref{lem:inf_van_order}, we infer that $\tilde{u}$ and all its (tangential, weighted normal and temporal) derivatives exist in a classical sense and vanish on strict subset of $(0,1) \times \mathcal{U} \times \{0\}$. As $\tilde{u}$ is smooth for every $x_{n+1}>0$ by parabolic regularity  and the infinite order of vanishing of $\tilde{u}$ up to $x_{n+1}=0$ (see Lemma \ref{lem:inf_van_order}), we hence obtain that $\tilde{u}$ is $C^{\infty}$ smooth up to the boundary $(0,1) \times \mathcal{U} \times \{0\}$. In particular, all integration by parts identities in the Carleman estimates are justified.\\

\emph{Step 2: Application of the Carleman estimate.} The vanishing of infinite order together with a cut-off argument allows us to apply the Carleman inequality from Proposition \ref{prop:Carl_eucl}. We discuss the details of this. Following \cite[Section 2]{KT09}, we set 
\begin{align*}
E_{\delta} &:= \left((0,2\delta^2) \times B_{2\delta}^+(0)\right)\setminus \left((0,\delta^{2}) \times B_{\delta}^+(0)\right),\\
F_{\tau}^{ext} & := \left(\left(0,\frac{2}{\tau}\right) \times B_{2}^+(0) \right)\setminus \left(\left(0,\frac{1}{\tau}\right) \times B_{1}^+(0) \right),\\
F_{\tau}^{int}&:= \left(\frac{1}{32\tau}, \frac{1}{16 \tau}\right) \times B_{\frac{1}{8}}^+(0) .
\end{align*}
Here we assume that $\delta\ll \tau^{-\frac{1}{2}}$. We now consider a cut-off function $\eta$ with the property that $\eta \equiv 1$ in $(0,1)\times B_{1}^+(0)$ and $\supp(\eta)\subset (0,2)\times B_{2}^+(0)$ and set
\begin{align*}
\tilde{u}_{\delta}(t,x) = \left( 1-\eta\left(\frac{t}{\delta^2},\frac{x}{\delta}\right) \right) \eta\left(\tau t, x \right) \tilde{u}(t,x).
\end{align*}
This function is admissible in the Carleman estimate from Proposition \ref{prop:Carl_eucl}; we use the weight function 
$$
\phi(t,x):= - \frac{|x|^2}{8 t} - \frac{1}{2} \ln(t) - \tau \ln(t) + \tau \frac{t}{3} = - \frac{|x|^2}{8t} + \tau h\left(-\frac{1}{4}\ln(t)\right),
$$ 
where in the notation from Proposition \ref{prop:Carl_eucl} we have $h(\ell) = 4 \ell - \frac{2}{\tau} \ell + \frac{1}{3} e^{- 4\ell}$ (which satisfies the requirements from Proposition \ref{prop:Carl_eucl}). 
By definition of $\tilde{u}_{\delta}$, we obtain
\begin{align*}
\left(x_{n+1}^{1-2s}\p_t + \nabla \cdot x_{n+1}^{1-2s} \nabla \right) \tilde{u}_{\delta} & = f
\end{align*}
with 
\begin{align*}
f(t,x)
&=
\tilde{u}(t,x)x_{n+1}^{1-2s} \p_t \left[ \left( 1-\eta\left(\frac{t}{\delta^2},\frac{x}{\delta}\right) \right) \eta\left(\tau t, x \right) \right]\\
& \quad + 2 x_{n+1}^{1-2s} \nabla \tilde{u}(t,x) \cdot \nabla \left[ \left( 1-\eta\left(\frac{t}{\delta^2},\frac{x}{\delta}\right) \right) \eta\left(\tau t,x \right) \right]\\
& \quad + \tilde{u}(t,x)\nabla \cdot x_{n+1}^{1-2s}\nabla \left[ \left( 1-\eta\left(\frac{t}{\delta^2},\frac{x}{\delta}\right) \right) \eta\left(\tau t, x \right) \right].
\end{align*}
We next seek to apply the Carleman estimate from Proposition \ref{prop:Carl_eucl} in order to deduce that $\tilde{u}_{\delta}=0$ in $\{0\}\times B_{1/8}^+$. To this end, we note that the contributions on the right hand side of the Carleman estimate are localized on the support of $f$, i.e. in the domains $E_{\delta}$ and $F_{\tau}^{ext}$. Thus, in these we seek to deduce upper bounds for the Carleman weight $e^{\phi}$. 

We begin with the bound in $E_{\delta}$.
Due to the infinite order of vanishing in $E_{\delta}$, it suffices to obtain a rough polynomial bound for the weight function there. 
We claim that for $\tau \geq \tau_0>1$ sufficiently large and $0<\delta$ sufficiently small, it holds
\begin{align}\label{estimate of phi in E delta}
\phi(t,x) \leq  - 3(\tau +1)\ln(\delta) + \tau \ln(\tau), \quad  \mbox{ for } (t,x)\in E_{\delta}.
\end{align}
In order to observe \eqref{estimate of phi in E delta}, we split the domain into two parts:
\begin{itemize}
\item[(a)]
In $(\delta^2, 2\delta^2)\times B_{2\delta}^+(0)$ we estimate
\begin{align*}
\phi(t,x)
\leq & - \frac{1}{2} \ln(\delta^2) - \tau \ln(\delta^2) + \tau {2\delta^2\over 3}
= -\left(\tau+\frac{1}{2}\right)\ln(\delta^2) + \tau {2\delta^2\over 3}\\
\leq &-2(\tau + 1)\ln(\delta).
\end{align*}
Here we used that $0<\delta ^2 <\delta \ll 1 \ll -\ln(\delta)$ for $\delta>0$ sufficiently small and that the first contribution in the definition of the weight $\phi(t,x)$ is always negative.
\item[(b)] In $(0,\delta^2) \times (B_{2\delta}^+(0)\setminus B_{\delta}(0))$ we estimate
\begin{align*}
\phi(t,x) \leq \widetilde{\psi}(t):= -\frac{\delta^2}{8t} - \frac{1}{2} \ln(t) - \tau \ln(t) + \tau \frac{\delta^2}{3}.
\end{align*}
We next maximize the auxiliary function $\widetilde{\psi}(t)$, which yields $t_{\max}= \frac{\delta^2}{8} \frac{1}{(\frac{1}{2}+\tau)}$. As a consequence,
\begin{align*}
\phi(t,x) &\leq \widetilde{\psi}(t_{\max},x)
\leq \frac{1}{2} + \tau - \frac{1}{2} \ln(\delta^2/(8\tau)) - \tau \ln(\delta^2/(8\tau)) + \tau \frac{2 \delta^2}{3}\\
&\leq \tau \ln(\tau) - 3(\tau + 1)\ln(\delta),
\end{align*}
if $\tau \geq \tau_0>1$ is sufficiently large and $\delta>0$ is sufficiently small. Combining above (a) and (b) yields \eqref{estimate of phi in E delta}.
\end{itemize}

In the domain $F_{\tau}^{ext}$ we obtain the upper bound
\begin{align}
\label{eq:Fext}
\phi(t,x) \leq \tau \ln(\tau) + \tau \ln(8) + \frac{1}{2} \ln(\tau)+ 4,
\end{align}
by the following observations.
Indeed, we split the domain $F^{ext}_{\tau}$ into two parts:
\begin{itemize}
\item[(a)] In $(1/\tau, 2/\tau) \times B_2^+$ we again drop the negative contributions and estimate as follows
\begin{align*}
\phi(t,x) \leq  - \frac{1}{2} \ln(1/\tau) - \tau \ln(1/\tau)  + \frac{2}{3}
= \tau \ln (\tau) + \frac{1}{2}\ln(\tau) + \frac{2}{3} .
\end{align*}
As this is dominated by the expression in \eqref{eq:Fext}, this implies the claim.
\item[(b)] In $(0,1/\tau) \times (B_2^+(0)\setminus B_1^+(0))$ we argue slightly more carefully. Here we first estimate
\begin{align*}
\phi(t,x) \leq \psi(t,x):= - \frac{1}{8t} - \frac{1}{2}\ln(t)- \tau \ln(t) + \frac{1}{3}.
\end{align*}
Next, for $\tau \geq \tau_0>1$ sufficiently large, we maximize the auxiliary function $\psi(t,x)$, which yields $t_{\max} = \frac{1}{8(1/2 + \tau)}$. Hence, we obtain 
\begin{align*}
\phi(t,x) \leq \psi(t_{\max},x) \leq \frac{1}{2}\ln(\tau) + \frac{1}{2}\ln(8)+ \tau \ln(\tau) + \tau\ln(8) + \frac{1}{3}.
\end{align*}
This also implies the claimed bound \eqref{eq:Fext}.
\end{itemize} 

As a consequence, we bound the right hand side of the Carleman estimate as follows:
\begin{align*}
& \quad  \left\|e^{\phi} t^{\frac{1}{2}} x_{n+1}^{\frac{2s-1}{2}} f \right\|_{L^2((0,\infty) \times \R^{n+1}_+)} \\
&\leq \left\|e^{\phi} t^{\frac{1}{2}} x_{n+1}^{\frac{2s-1}{2}} f\right\|_{L^2(E_{\delta})} + \left\|e^{\phi} t^{\frac{1}{2}}x_{n+1}^{\frac{2s-1}{2}} f\right\|_{L^2(F^{ext}_{\tau})}\\
&\leq C\delta^{-3(\tau+1)}e^{\tau \ln(\tau)}  \left(\left\|x_{n+1}^{\frac{1-2s}{2}} \tilde{u}\right\|_{L^2(E_{\delta})}
+  \left\|x_{n+1}^{\frac{1-2s}{2}} \nabla \tilde{u}\right\|_{L^2(E_{\delta})} \right) \\
& \quad + Ce^{\tau \ln(\tau) + \tau \ln(8) + \frac{1}{2}\ln(\tau) +4}  \left(\left\|x_{n+1}^{\frac{1-2s}{2}} \tilde{u}\right\|_{L^2(F_{\tau}^{ext})}
+  \left\|x_{n+1}^{\frac{1-2s}{2}} \nabla \tilde{u}\right\|_{L^2(F_{\tau}^{ext})} \right).
\end{align*}
Using Caccioppoli's estimate together with the vanishing Dirichlet and Neumann boundary conditions to bound the gradient terms, we infer
\begin{align}
\label{eq:Carl_upper}
\begin{split}
\left\|e^{\phi} t^{\frac{1}{2}} x_{n+1}^{\frac{2s-1}{2}}f\right\|_{L^2((0,\infty) \times \R^{n+1}_+)}
&\leq C \delta^{-3(\tau +1)} e^{\tau \ln(\tau)} \left\| x_{n+1}^{\frac{1-2s}{2}} \tilde{u}\right\|_{L^2(\widetilde{E}_{\delta})}\\
& \quad + C e^{\tau \ln(\tau) + \tau \ln(8) + \frac{1}{2}\ln(\tau) +4}
\left\|x_{n+1}^{\frac{1-2s}{2}}\tilde{u}\right\|_{L^2(\widetilde{F}^{ext}_{\tau})},
\end{split}
\end{align}
where 
\begin{align*}
\tilde{E}_{\delta} &:= ((0,3\delta^2) \times B_{3\delta}^+(0))\setminus \left(\left(0,\frac{\delta^2}{2}\right) \times B_{\frac{\delta}{2}}^+(0) \right),\\
\widetilde{F}_{\tau}^{ext} & := \left(\left(0,\frac{3}{\tau}\right) \times B_{3}^+(0) \right)\setminus \left(\left(0,\frac{1}{2\tau}\right) \times B_{\frac{1}{2}}^+(0) \right).
\end{align*}

In order to infer the desired unique continuation result, it hence remains to bound $\phi(t,x)$ in $F_{\tau}^{int}$ from below. In this region we have the following lower bound on the weight function
	\begin{align*}
\phi(t,x)
\geq -\frac{\tau}{32} + \tau\ln(\tau)+ \tau\ln(16)+ \frac{1}{2}\ln(\tau) .
\end{align*}
Indeed, as in case (b) of the discussion of the estimate in $F^{ext}_{\tau}$ from above we estimate
\begin{align*}
\phi(t,x) \geq - \frac{1}{512 t} - \frac{1}{2}\ln(t)-\tau \ln(t)  =: \bar{\psi}(t,x).
\end{align*}
Noting that the critical point of $\bar{\psi}(t,x)$ is at $t_{crit}=\frac{1}{512(\tau + 1/2)}\notin \left( \frac{1}{32\tau }, \frac{1}{16\tau } \right)$ and computing the sign of $\bar{\psi}'$ in $ \left( \frac{1}{32\tau }, \frac{1}{16\tau } \right)$, we observe that $\psi$ is monotone decreasing in $ \left( \frac{1}{32\tau }, \frac{1}{16\tau } \right)$. Thus, we obtain
\begin{align*}
\phi(t,x) \geq \bar{\psi}\left(\frac{1}{16\tau},x\right) =  -\frac{\tau}{32} + \frac{1}{2} \ln(\tau) + \frac{1}{2}\ln(16) + \tau \ln(\tau) + \tau\ln(16).
\end{align*}
Hence, the left hand side of the Carleman estimate from Proposition \ref{prop:Carl_eucl} can be bounded from below by
\begin{align}
\label{eq:Carl_lower}
\left\|e^{\phi}t^{-\frac{1}{2}} (\bar{h}'')^{\frac{1}{2}} x_{n+1}^{\frac{1-2s}{2}} \tilde{u}_{\delta}\right\|_{L^2((0,\infty)\times \R^{n+1}_+)} 
\geq e^{\tau\ln(\tau) + \tau \left(\ln(16)-\frac{1}{32}\right)}\left\|x_{n+1}^{\frac{1-2s}{2}} \tilde{u}\right\|_{L^2(F^{int}_{\tau})}.
\end{align}
Combining the bounds from \eqref{eq:Carl_upper} and \eqref{eq:Carl_lower} (using that $\ln(16)-\frac{1}{32}\geq \ln(14)$ and $\tau \ln(8) + \frac{1}{2}\ln(\tau)\leq \tau \ln(9)$ for $\tau\geq \tau_0>1$ sufficiently large), for $\tau>\tau_0>1$ sufficiently large, we thus infer that
\begin{align*}
& e^{ \tau (\ln(\tau) + \ln(14))}
\left\|x_{n+1}^{\frac{1-2s}{2}} \tilde{u}\right\|_{L^2(F^{int}_{\tau})} \\
\leq &C\left( e^{\tau(\ln(\tau)+\ln(9))} \left\| x_{n+1}^{\frac{1-2s}{2}} \tilde{u}\right\|_{L^2(\widetilde{F}^{ext}_{\tau})} + \delta^{-4\tau} e^{\tau \ln(\tau)}\left\|x_{n+1}^{\frac{1-2s}{2}}\tilde{u}\right\|_{L^2(\widetilde{E}_{\delta})} \right).
\end{align*}
Using the infinite order of vanishing of $\tilde{u}$ (see Lemma \ref{lem:inf_van_order}), we may pass to the limit $\delta \rightarrow 0$ (for fixed $\tau>\tau_0>1$). As a consequence,
\begin{align*}
\left\|x_{n+1}^{\frac{1-2s}{2}} \tilde{u}\right\|_{L^2(F^{int}_{\tau})}
\leq C e^{-\tau \ln(14/9)}\left\|x_{n+1}^{\frac{1-2s}{2}} \tilde{u}\right\|_{L^2(\widetilde{F}^{ext}_{\tau})}.
\end{align*}
By the local $C^{0,\alpha}$ regularity of $\tilde{u}$ (see Lemma \ref{lem:reg_up_to_boundary}) and  the mean value theorem (of integral form), there exists $\tilde{\tau} \in \left( \frac{1}{32 \tau}, \frac{1}{16 \tau} \right)$ such that 
\begin{align*}
\left\|x_{n+1}^{\frac{1-2s}{2}} \tilde{u}(\tilde{\tau},\cdot, \cdot)\right\|_{L^2(B_{1\over 8}^+)} 
= 32\tau \left\|x_{n+1}^{\frac{1-2s}{2}} \tilde{u}\right\|_{L^2(F^{int}_{\tau})}.
\end{align*}
Choosing $\tau = \frac{1}{t}$ and passing to the limit $t \rightarrow 0$ (while using the regularity of $\tilde{u}$), we obtain
\begin{align*}
\left\|x_{n+1}^{\frac{1-2s}{2}} \tilde{u}\right\|_{L^2(B_{\frac{1}{8}}^+)} \leq C 32\lim\limits_{t \rightarrow 0} \frac{1}{t} e^{-\ln(14/9)\frac{1}{t}} \left\|x_{n+1}^{\frac{1-2s}{2}} \tilde{u}\right\|_{L^2(\widetilde{F}^{ext}_{1/t})} =0,
\end{align*} 
whence we conclude that $\tilde{u}(0,x)=0$ for $x\in B_{\frac{1}{8}}^+$. 

Since this holds for all the time slices on which $\tilde{u}(t,x) = 0 $ in $(0,1)\times B_{1\over 8}'$, we obtain that $\tilde{u} \equiv 0$ in $(0,1)\times B_{1\over 8}^+$ by using the time and spatial tangential translation invariance of the operator $x_{n+1}^{1-2s}\p_t - \nabla \cdot x_{n+1}^{1-2s} \nabla $. As a consequence of spatial unique continuation in the upper half-plane, this then entails that $u\equiv 0$ in $(0,1)\times \R^n $, which yields the desired result. 
\end{proof}

\begin{remark}\label{remark for strong unique continuation for adjoint}
The global weak unique continuation property also holds for the adjoint fractional parabolic operator $\LL^s_\ast$. In other words, suppose that $u=\LL^s_\ast u =0$ in $(0,1)\times \mathcal{U}$ for some nonempty open set $\mathcal{U}\subset \R^n$, then $u\equiv 0$ in $(0,1)\times \R^n$. The proof follows along the same lines as the proof of Proposition \ref{prop:global_UCP} by invoking the Carleman estimate derived in Section \ref{Section 5}.
\end{remark}


\section{Runge approximation and the proof of main theorems}\label{Section 6}

Recall that the initial exterior value problem of the fractional parabolic equation is given by \eqref{noncal_heat} with zero initial value. As explaind in Proposition \ref{prop:cut-off}, one can multiply a cutoff function to ensure that the future data are zero, without changing the solution in a given (time-space) domain. Therefore, it suffices to consider  Runge approximation results in these time space domains.

\subsection{Runge approximation}
For $0<s<1$ and $T>0$, we recall the notation $\Omega_T=(-T,T)\times \Omega \subset \R^{n+1}$. Let $Q\in L^\infty(\Omega_T)$ satisfy the eigenvalue condition \eqref{eigenvalue condition} and $u=u_f\in \HH^s(\R^{n+1})$ be a solution of 
\begin{align}\label{equ in Runge}
(\LL^s+Q)u_f=0 \text{ in }\Omega_T, \quad \text{ with }\quad u_f=f \text{ in } (-T,T)\times \Omega_e, \text{ and }u_f=0 \text{ for }t\leq -T. 
\end{align}
Then $\chi_{(-\infty,T]}(t)u_f(t,x)$ is the unique solution of \eqref{equ in Runge}.

\begin{lemma}[Runge approximation] \label{Lemma Runge approximation}
	For $n\geq 1$, let $\mathcal U \subset \Omega_e$ be an open subset and $T>0$ be a real number. Then the set 
	$$
	\mathcal{R}=\left\{u_f|_{\Omega_T }:\ u_f\ \hbox{ the solution to \eqref{equ in Runge}},\ f\in C^\infty_c((-T,T)\times \mathcal U) \right\}
	$$
	is dense in $L^2(\Omega_T)$.
\end{lemma}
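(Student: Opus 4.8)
The plan is to prove the Runge approximation result by a standard duality/Hahn--Banach argument, combined with the global weak unique continuation property (Theorem~\ref{Thm UCP}, in the form of Proposition~\ref{prop:global_UCP}) and the Alessandrini-type integral identity (Lemma~\ref{Lem Integral identitiy}). First I would reduce to showing that the orthogonal complement of $\mathcal{R}$ in $L^2(\Omega_T)$ is trivial: since $\mathcal{R}$ is a linear subspace of $L^2(\Omega_T)$ (note that, by Proposition~\ref{prop:cut-off} and the comments in Section~\ref{Section 6}, we may freely replace $u_f$ by $\chi_{(-\infty,T]}(t)u_f$ without affecting the trace in $\Omega_T$), it suffices by Hahn--Banach to take $g \in L^2(\Omega_T)$ with $(g, u_f|_{\Omega_T})_{\Omega_T} = 0$ for all admissible $f\in C_c^\infty((-T,T)\times\mathcal{U})$, and to deduce $g\equiv 0$.

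Next I would introduce the dual problem. Given such $g$, extend it by zero to $\R^{n+1}$ and let $\phi \in \HH^s(\R^{n+1})$ be the unique solution (via the adjoint well-posedness, see Remark~\ref{rmk:adjoint}, since the eigenvalue condition for $\LL^s_\ast+Q$ is equivalent to \eqref{eigenvalue condition}) of
\begin{align*}
\begin{cases}
(\LL^s_\ast + Q)\phi = g & \text{in } \Omega_T,\\
\phi = 0 & \text{in } (\Omega_e)_T,\\
\phi = 0 & \text{for } t\geq T,\ x\in\R^n.
\end{cases}
\end{align*}
Using the bilinear form $B_Q$ from \eqref{bilinear} and the fact that $u_f - f \in \HH^s_{\overline{\Omega_T}}$ with $f$ supported in $(-T,T)\times\mathcal{U}\subset(\Omega_e)_T$, the orthogonality $(g,u_f)_{\Omega_T}=0$ together with the weak formulation for $\phi$ gives
\begin{align*}
0 = (g, u_f)_{\Omega_T} = B_Q(u_f, \phi) = \langle \LL^s u_f, \phi\rangle_{\HH^{-s}\times\HH^s} + (Qu_f,\phi)_{\Omega_T} - \langle \LL^s_\ast \phi, f\rangle_{\ldots} \text{-type terms},
\end{align*}
and after carefully tracking the pairings (this is the Alessandrini-identity bookkeeping, cf.\ Lemma~\ref{Lem Integral identitiy} and the integration-by-parts formula \eqref{integration by parts formula}) one arrives at an identity of the form
\begin{align*}
\langle \LL^s_\ast \phi, f\rangle_{(\Omega_e)_T} = 0 \quad \text{for all } f\in C_c^\infty((-T,T)\times\mathcal{U}),
\end{align*}
since $(\LL^s+Q)u_f = 0$ in $\Omega_T$ kills the interior contribution. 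Because $\phi = 0$ in $(\Omega_e)_T$ and in particular in $(-T,T)\times\mathcal{U}$, and the above forces $\LL^s_\ast\phi = 0$ in $(-T,T)\times\mathcal{U}$ as well, the global weak unique continuation property for the adjoint operator $\LL^s_\ast$ (Remark~\ref{remark for strong unique continuation for adjoint}) yields $\phi \equiv 0$ in $(-T,T)\times\R^n$. Finally, feeding $\phi\equiv 0$ back into its defining equation gives $g = (\LL^s_\ast+Q)\phi = 0$ in $\Omega_T$, which completes the argument.

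The main obstacle I anticipate is the careful justification of the integration-by-parts manipulation that produces the clean exterior identity $\langle\LL^s_\ast\phi, f\rangle_{(\Omega_e)_T}=0$: one must handle the fact that $\LL^s$ and $\LL^s_\ast$ are nonlocal in space \emph{and} time, so the ``restriction to $(\Omega_e)_T$'' and the pairings between $\HH^s$-type spaces on $\R^{n+1}$ versus on $\Omega_T$ need to be reconciled, using the time-support localization of Remark~\ref{rmk:time_cut_off} (i.e.\ that $\LL^s_\ast\phi$ on $t<T$ only depends on $\phi$ for $t\geq$ the relevant range, and symmetrically) to ensure no ``future'' or spurious boundary contributions appear. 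A secondary point is verifying that $\mathcal{R}$ is genuinely a subspace and that the density is unaffected by the cut-off in time; this is where Proposition~\ref{prop:cut-off} and the observations at the start of Section~\ref{Section 6} are invoked. Once the duality identity is set up correctly, the uniqueness step is immediate from the already-established unique continuation result, so the analytic heart of the proof is really Theorem~\ref{Thm UCP} (equivalently Proposition~\ref{prop:global_UCP}), and this lemma is its corollary.
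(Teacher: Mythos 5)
Your proposal is correct and follows essentially the same route as the paper: a Hahn--Banach duality argument in which the orthogonal element is fed into the adjoint problem $(\LL^s_\ast+Q)\phi=g$ with vanishing exterior and future data, the weak formulation yields $\phi=\LL^s_\ast\phi=0$ in $(-T,T)\times\mathcal{U}$, and the global weak unique continuation for $\LL^s_\ast$ (Remark \ref{remark for strong unique continuation for adjoint}) forces $\phi\equiv 0$ and hence $g\equiv 0$. The only cosmetic difference is that the paper also prescribes $\phi=0$ for $t\leq -T$ and concludes via $\LL^s\phi=0$ globally, whereas you use the time-localization of $\LL^s_\ast$; both settle the final step in the same way.
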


\begin{proof}
	The proof is similar to the proof of \cite[Theorem 1.2]{ghosh2017calderon} and \cite[Theorem 1.2]{ghosh2016calder}. Invoking the Hahn-Banach theorem, it is sufficient to show that if $(v, w)_{L^2(\Omega_T)}=0$ for all $v\in \mathcal R$, then necessarily $w\equiv 0$.
	Let thus $w$ be as described above, i.e. let us assume that	 
	\begin{align*}
	\left(\chi_{(-\infty,T]}u_f,w\right)_{L^2(\Omega_T)}=\left(u_f,w\right)_{L^2(\Omega_T)}=0, \quad \text{ for all }f\in C^\infty_c((-T,T)\times \mathcal U),
	\end{align*}
	where $\chi_{(-\infty,T]}u_f$ be the unique solution of \eqref{equ in Runge} in $\Omega_T$. Here we have utilized the future data will not affect the solution in $\Omega_T$ (see Section \ref{Section 3}).
	Next, let $\phi\in \HH^s(\R^{n+1})$ be the solution of 
	\begin{align}\label{equation of phi}
	\begin{cases}
	(-\p_t -\Delta)^s\phi +Q \phi=w & \hbox{in } (-T,T)\times \Omega,\\
	\phi =0 & \hbox{in } (-T,T) \times \Omega_{e}) \cup (-\infty, -T] \times \R^n) \cup ([T, \infty)\times \R^n).
	\end{cases}
	\end{align}
Then,
	\begin{align*}
	(u_f, w)_{L^2((-T,T)\times\Omega)} &= (u_f -f,   (-\p_t -\Delta)^s\phi +Q \phi)_{L^2((-T,T)\times \R^n)}\\
	& = -(f,  (-\p_t -\Delta)^s\phi )_{L^2((-T,T)\times \mathcal U) },
	\end{align*}	
	for all $f\in C^\infty_c((-T,T)\times \mathcal U)$, where in the last identity we used the fact that $f$ is supported in $(-T,T)\times \mathcal U$. 
	Thus, we arrive at
	$$
	(-\p_t -\Delta)^s\phi = 0\qquad \ \hbox{and}\qquad \ \phi=0\ \ \hbox{in }(-T,T)\times \mathcal U.
	$$
Then, by virtue of the global weak unique continuation property (see Remark \ref{remark for strong unique continuation for adjoint}), we obtain
	$$
	\phi=0 \ \ \hbox{in }(-T,T)\times \R^n.
	$$
	Combining this with the exterior condition of $\phi=0$ in past and future time from \eqref{equation of phi}, we obtain that $\phi(t,x)\equiv 0 $ for all $(t,x)\in \R^{n+1}$, from which we infer that $\LL^s \phi=0$ in $\R^{n+1}$. 	
 	Thus, recalling the equation \eqref{equation of phi} again, we infer that $w \equiv 0$.
\end{proof}

\begin{remark}
	By similar arguments, one can also obtain the Runge approximation property for the adjoint fractional parabolic equation. More specifically, for $0<s<1$ and $T>0$, let $Q\in L^\infty(\Omega_T)$ satisfy the eigenvalue condition \eqref{eigenvalue condition} and let $v=v_g\in \HH^s(\R^{n+1})$ be a solution of 
	$$
	(\LL^s_\ast +Q)v_g=0 \text{ in }\Omega_T, \quad \text{ with }\quad v_g=g \text{ in } (-T,T)\times \Omega_e,
	$$ 
	and $v_g = 0$ for $t\geq T$ (see Section \ref{Section 3} for details). Then $\chi_{[-T,\infty)}u_g$ is the unique solution of the above equation in $\Omega_T$. 
	Let $\mathcal U \subset \Omega_e$ be an open subset, then the set 
	$$
	\left\{v_g|_{\Omega_T }:\ g\in C^\infty_c((-T,T)\times \mathcal U) \right\}
	$$
	is dense in $L^2(\Omega_T)$. This result follows directly from the proof of Lemma \ref{Lemma Runge approximation} and a corresponding variant Theorem \ref{Thm UCP}, therefore, we omit the details here.
\end{remark}

\subsection{Proof of Theorem \ref{MAIN THEOREM}}

With the help of the Runge approximation in Lemma \ref{Lemma Runge approximation}, now we can address the global uniqueness result for the fractional parabolic equation. The proof is similar to that in \cite{ghosh2017calderon} and \cite{ghosh2016calder}.
 
\begin{proof}[Proof of Theorem \ref{MAIN THEOREM}]
	Suppose that  $\Lambda_{Q_{1}}f|_{(-T,T)\times \mathcal{U}_{2}}=\Lambda_{Q_{2}}f|_{(-T,T)\times\mathcal{U}_{2}}$
	for any $f\in C_{c}^{\infty}((-T,T)\times \mathcal{U}_{1})$, where $\mathcal{U}_{1}$
	and $\mathcal{U}_{2}$ are arbitrary open subsets of $\Omega_e=\R^n\setminus\overline{\Omega}$. By utilizing the integral
	identity in Lemma \ref{Lem Integral identitiy}, we have 
	\begin{align}
	\label{eq:Aless}
	\int_{\Omega_T}(Q_{1}-Q_{2})u_{1}u_{2}dxdt=0,
	\end{align}
	where $u_{1},u_{2}\in \HH^{s}(\mathbb{R}^{n+1})$ are the solutions of  
	\[
	(\mathcal{L}^{s}+Q_{1})u_{1}=0 \text{ in }\Omega_T \text{ with }u_1 =0 \text{ for } \{t\leq -T\},
	\]
	and 
	\[
	(\mathcal{L}_{\ast}^{s}+Q_{2})u_{2}=0  \text{ in }\Omega_T\text{ with } u_2 =0 \text{ for }\{t\geq T\}. 
    \]
    Here $u_{1}$ and $u_{2}$ have the same exterior values $f_{j}\in C_{c}^{\infty}((-T,T)\times \mathcal{U}_{j})$,
	for $j=1,2$.
	
	Given any function $g\in L^{2}(\Omega_T)$, and using the Runge approximation result (see Lemma \ref{Lemma Runge approximation}), it is possible to find two sequences $ \{u_j^{(1)} \}_{j\in \mathbb N}$, $\{u_j^{(2)} \}_{j\in \mathbb N}$ of functions
	in $\HH^{s}(\mathbb{R}^{n+1})$ that satisfy 
	\begin{align*}
	& (\mathcal{L}^{s}+Q_{1})u_{j}^{(1)}=({\mathcal{L}^{s}_{\ast}}+Q_{2})u_{j}^{(2)}=0\text{ in }\Omega_T,\\
	& \mbox{supp}(u_{j}^{(1)})\subseteq\overline{ \Omega^{(1)}_T}\quad \mbox{ and }\quad\mbox{supp}(u_{j}^{(2)})\subseteq \overline{\Omega^{(2)}_T},\\
	& u_{j}^{(1)}|_{\Omega_T}=g+r_{j}^{(1)},\qquad  u_{j}^{(2)}|_{\Omega_T}=1+r_{j}^{(2)},
	\end{align*}
	where $\Omega^{(1)}_T$, $\Omega^{(2)}_T$ are two open sets in $\mathbb{R}^{n+1}$ which contain $\Omega_T$, and $r_{j}^{(1)},r_{j}^{(2)}\to0$ in $L^{2}(\Omega_T)$
	as $j\to\infty$. Inserting these solutions into the integral identity \eqref{eq:Aless}
	and taking $j\to\infty$, then leads to
	\[
	\int_{\Omega_T}(Q_{1}-Q_{2})g\, dxdt=0.
	\]
	Since $g\in L^{2}(\Omega_T)$ is arbitrary, we must hence have $Q_1=Q_2$ in $\Omega_T$, which completes the proof.
\end{proof}

\subsection{Single measurement results -- proof of Theorem \ref{thm:single_measurement}}
\label{sec:single_meas}

As in \cite{GRSU18}, it is possible to exploit the global weak unique continuation property together with a Tikhonov regularization argument to infer a single measurement recovery result.

\begin{proposition}[Tikhonov regularization]
\label{prop:Tikh}Assume that $W\subset \R^n$ is open with $\overline{\Omega}\cap\overline{W}=\emptyset$.
Let $s\in (0,1)$, $u\in \widetilde{\mathcal{H}}^s((-T,T)\times \Omega)$ and let $h= (\p_t-\D)^s u$. Then,  $u = \lim\limits_{\alpha \rightarrow 0} u_{\alpha}$  in $\widetilde{\mathcal{H}}^s((-T,T)\times \Omega)$, where for $\alpha \in (0,1)$, the functions $u_{\alpha}$ are defined as 
\begin{align*}
u_{\alpha} = \argmin_{v \in \widetilde{\mathcal{H}}^s((-T,T)\times \Omega)} \left( \|(\p_t - \D)^s v- h\|_{\mathcal{H}^{-s}((-T,T) \times W)}^2 + \alpha \|v\|_{\widetilde{\mathcal{H}}^s((-T,T)\times \Omega)} ^2\right).
\end{align*}
\end{proposition}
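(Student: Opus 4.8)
The plan is to recast the statement as the standard convergence result for Tikhonov regularization of a linear inverse problem with \emph{exact} data; the only problem-specific ingredient will be the injectivity of the forward operator, which I would extract from the weak unique continuation property in Theorem \ref{Thm UCP}.

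First I would introduce the bounded linear \emph{measurement operator}
\[
A: \widetilde{\mathcal{H}}^s((-T,T)\times\Omega) \longrightarrow \mathcal{H}^{-s}((-T,T)\times W), \qquad Av := (\p_t-\D)^s v|_{(-T,T)\times W},
\]
which is well-defined and bounded by Lemma \ref{Lemma 1} together with the boundedness of the restriction map $\mathcal{H}^{-s}(\R^{n+1}) \to \mathcal{H}^{-s}((-T,T)\times W)$. With this notation $h = Au$, and the functional to be minimized is $J_\alpha(v) = \|Av-h\|_{\mathcal{H}^{-s}((-T,T)\times W)}^2 + \alpha\|v\|_{\widetilde{\mathcal{H}}^s((-T,T)\times\Omega)}^2$. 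Since $J_\alpha$ is continuous, coercive and, for $\alpha>0$, strictly convex, it admits a unique minimizer $u_\alpha$, characterised by the normal equation $(A^*A+\alpha I)u_\alpha = A^*h$; equivalently, $u_\alpha = (A^*A+\alpha I)^{-1}A^*A\, u$.

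The main point I would then establish is the injectivity of $A$. If $Av=0$, then $\LL^s v = 0$ in $(-T,T)\times W$; moreover, since $v$ lies in the closure of $C_c^\infty((-T,T)\times\Omega)$ and $\overline{\Omega}\cap\overline{W}=\emptyset$, the support of $v$ is disjoint from $(-T,T)\times W$, so also $v = 0$ there. Theorem \ref{Thm UCP} then yields $v\equiv 0$ in $(-T,T)\times\R^n$, and since $\{t=\pm T\}\times\R^n$ is a null set this forces $v = 0$ in $L^2(\R^{n+1})$ and hence in $\widetilde{\mathcal{H}}^s((-T,T)\times\Omega)$. Consequently $\ker A = \{0\}$ and $\ker(A^*A)=\{0\}$. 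This is the step where the unique continuation property enters, and it is the only genuinely nontrivial part of the argument.

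Finally, to pass to the limit $\alpha\to 0$ I would use the spectral theorem for the bounded, self-adjoint, non-negative operator $B := A^*A$: writing $B = \int_0^{\|B\|} \lambda\, dE_\lambda$ one obtains $u_\alpha - u = -\alpha(B+\alpha I)^{-1}u = -\int_0^{\|B\|} \frac{\alpha}{\lambda+\alpha}\, dE_\lambda u$, so that $\|u_\alpha - u\|^2 = \int_0^{\|B\|} \big(\frac{\alpha}{\lambda+\alpha}\big)^2 d\|E_\lambda u\|^2$. The integrand is bounded by $1$ and converges to $0$ pointwise in $\lambda>0$ as $\alpha\to 0$, so dominated convergence gives $\|u_\alpha-u\|^2 \to \|E_{\{0\}}u\|^2 = \|P_{\ker B}\, u\|^2 = 0$ by the injectivity above; alternatively one may simply quote the classical convergence theorem for Tikhonov regularization with data in $\mathrm{Ran}(A)$. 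Apart from the injectivity, the remaining arguments are routine functional analysis, so I do not expect further obstacles.
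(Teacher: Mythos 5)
Your proof is correct, and it reaches the conclusion by a route that is organized differently from the paper's. The paper's proof concentrates on establishing \emph{compactness} of the measurement map $v \mapsto (\p_t-\D)^s v|_{(-T,T)\times W}$, which it obtains from the pseudolocality of $(\p_t-\D)^s$ by writing the map as $\chi_2(\p_t-\D)^s\chi_1$ with cut-offs $\chi_1,\chi_2$ having disjoint supports, and then it invokes the general theory of Tikhonov regularization from Colton--Kress. You instead work directly with the bounded operator $A$: you prove injectivity of $A$ from the global weak unique continuation property of Theorem \ref{Thm UCP} (using that $\supp v\subset [-T,T]\times\overline{\Omega}$ is disjoint from $(-T,T)\times W$, so $Av=0$ gives $v=\LL^s v=0$ there), and then conclude by the spectral theorem that $u_\alpha=(A^*A+\alpha I)^{-1}A^*Au\to P_{(\ker A)^{\perp}}u=u$. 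This comparison is instructive in both directions: on the one hand, your argument makes explicit an ingredient the paper's proof leaves implicit --- convergence of $u_\alpha$ to $u$ itself, rather than to the projection of $u$ onto $(\ker A)^{\perp}$, genuinely requires $u\perp\ker A$, and the Colton--Kress framework the paper cites is formulated for injective operators; your UCP-based injectivity supplies exactly this, by the same mechanism that drives the paper's single-measurement theorem. On the other hand, compactness is not needed for the qualitative exact-data convergence (your spectral-theorem computation works for any bounded $A$), so your proof is more economical, while the paper's pseudolocality/compactness observation serves to place the problem squarely in the standard compact-operator regularization framework and records the severe ill-posedness of the map. Your auxiliary steps are sound as written: $\mathcal{H}^{-s}((-T,T)\times W)$ is a quotient of a Hilbert space by a closed subspace, hence a Hilbert space, so $A^*$ and the normal equation $(A^*A+\alpha I)u_\alpha=A^*h$ make sense, and the hypotheses of Theorem \ref{Thm UCP} are exactly the conditions $v=\LL^s v=0$ in $(-T,T)\times W$ that you verified.
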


\begin{proof}
By possibly shrinking $W$, without loss of generality we may assume that $W$ is a bounded open set such that $\overline{\Omega}\cap \overline{W}=\emptyset$. As a consequence, we obtain the compactness of the mapping 
\begin{align*}
L: \widetilde{\mathcal{H}}^{s}((-T,T)\times \Omega) \rightarrow
\mathcal{\mathcal{H}}^{-s}((-T,T)\times W),\
  v \mapsto \left. (\p_t - \D)^s v \right|_{(-T,T)\times W}.
\end{align*}
Here the compactness follows from the pseudolocality of the operator $(\p_t - \D)^s$: Setting 
\begin{itemize}
\item $\chi_1$ to be a smooth cut-off function which is equal to one in a small neighbourhood of $(-T,T)\times W$ and vanishes outside a slightly larger neighbhourhood of the same set,
\item and $\chi_2$ a smooth cut-off function which is equal to one in a small neighbourhood of $(-T,T) \times \Omega$, which vanishes in a slighly larger neighbourhood of this set and which is constructed such that the closures of the support of $\chi_1$ and $\chi_2$ are empty,
\end{itemize}
the pseudolocality of the operator $(\p_t -\D)^s$ implies that $L = \chi_2 (\p_t -\D)^s \chi_1$ is a compact operator. With this observation it is possible to invoke the general theory of Tikhonov regularization, for which we refer for instance to \cite[Chapter 4]{ColtonKress}.
\end{proof}

\begin{remark}
An alternative argument yielding the compactness of the operator $L$ is to invoke the representation formula from \eqref{eq:integral_rep}.
\end{remark}

With this in hand, we proceed to the proof of the single measurement result, which as in the static case relies on the combination of Proposition \ref{prop:Tikh} with the weak unique continuation property of Proposition \ref{prop:global_UCP}.

\begin{proof}[Proof of Theorem \ref{thm:single_measurement}]
We consider a splitting of $u$ into a function $v\in \mathcal{H}^s((-T,T)\times \Omega)$ and the boundary data {$f\in \widetilde{\mathcal{H}}^s((-T,T)\times \Omega_e)$}, i.e. $u= v + f$. By construction and by the representation of the Dirichlet-to-Neumann map, we have (as {$ \widetilde{\mathcal{H}}^s((-T,T)\times \Omega_e)$} functions)
\begin{align*}
(\p_t - \D)^s v 
= (\p_t - \D)^s u - (\p_t - \D)^s f
= \Lambda_Q f - (\p_t - \D)^s f.
\end{align*}
As $\Lambda_Q f - (\p_t - \D)^s f$ is known, we can apply Proposition \ref{prop:Tikh} to reconstruct $v$ globally and constructively from this. 
Returning to $u= f+ v$ implies that as $f$ and $v$ are known globally, also $u$ is known globally. As a consequence, it is possible to solve the equation satisfied by $u$ for the potential $Q$ (we recall the regularity result from Lemma \ref{lem:reg_up_to_boundary} which imply that all involved quantities in the quotient exist in a pointwise sense):
\begin{align*}
Q(t,x) = -\frac{(\p_t - \D)^s u(t,x)}{u(t,x)}.
\end{align*}
Using the weak unique continuation property of Proposition \ref{prop:global_UCP} as well as the regularity of the potential $Q(t,x)$, we then conclude that for any $(t,x)\in (-T,T)\times \Omega$, there exists a sequence $(t_k, x_k) \in (-T,T)\times \Omega$ such that $u(t_k, x_k) \neq 0$ and $(t_k, x_k) \rightarrow (t,x)$. Therefore, for every $(t,x)\in (-T,T)\times \Omega$, by continuity,	 we have
\begin{align*}
Q(t,x) = \lim\limits_{k \rightarrow \infty} Q(t_k, x_k)
= -\lim\limits_{k \rightarrow \infty} \frac{(\p_t - \D)^s u(t_k, x_k)}{u(t_k, x_k)}.
\end{align*}
This allows us to constructively recover $Q(t,x)$ for $(t,x)\in (-T,T)\times \Omega$.
\end{proof}

\vskip1cm
\textbf{Acknowledgment.}
R.-Y. Lai is partially supported by NSF grant DMS-1714490. Y.-H. Lin is supported by the Finnish Centre of Excellence in Inverse Modelling and Imaging (Academy of Finland grant 284715) and also by the Academy of Finland project number 309963. 

\bibliographystyle{abbrv}
\bibliography{ref}

\end{document}